\newtheorem{theorem}{Theorem}[section]
\newtheorem{prop}[theorem]{Proposition}
\newtheorem{lemma}[theorem]{Lemma}
\newtheorem{cor}[theorem]{Corollary}
\theoremstyle{definition}
\newtheorem*{remark}{Remarks}
\newcounter{tenumerate}
\def\P{\mathbb{P}}
\newcommand{\one}{\1}
\renewcommand{\epsilon}{\varepsilon}
\newcommand{\1}{\mathbf{1}}
\newcommand{\Aidekon}{{A\"{i}dekon }}
\newcommand{\E}{{\mathbb E}}
\newcommand{\remove}[1]{}
\renewcommand{\le}{\leqslant}
\renewcommand{\ge}{\geqslant}
\renewcommand{\leq}{\leqslant}
\renewcommand{\geq}{\geqslant}
\def\XXint#1#2#3{{\setbox0=\hbox{$#1{#2#3}{\int}$}
\vcenter{\hbox{$#2#3$}}\kern-.5\wd0}}
\begin{document}

\title{Convergence in law of the maximum of 
nonlattice branching random walk}
\author{Maury Bramson\\
University of Minnesota
\thanks{Partially supported by NSF grants DMS-1105668 and DMS-1203201.} \and
Jian Ding \\
University of Chicago\thanks{Partially supported by NSF grant DMS-1313596.} \and Ofer Zeitouni\thanks{Partially supported by
NSF grant
DMS-1106627, a grant from the Israel Science
Foundation, and the Herman P. Taubman chair of Mathematics at the
Weizmann institute.}
\\  Weizmann institute}

\date{April 1, 2015}
\maketitle

\begin{abstract}
Let $\eta^*_n$ denote the maximum, at time $n$, of a nonlattice
one-dimensional 
branching 
 random walk $\eta_n$ possessing (enough)
exponential moments. In a seminal paper,
\Aidekon \cite{Aidekon} demonstrated 
convergence of $\eta^*_n$ in law, after recentering, and gave a representation
of the limit.  
We give here a shorter proof of this convergence
by employing reasoning motivated by Bramson, Ding and Zeitouni \cite{BDZ}.  
Instead of spine methods and a careful analysis of the renewal measure
for killed
random walks, our approach employs a modified version of the second moment
method that may be of independent interest. We indicate the modifications needed in order to handle lattice
random walks.

Soit $\eta_n^*$ le maximum, au moment $n$,
d'une marche al\'{e}atoire branchante 
unidimensionelle
qui n'est pas support\'{e} sur un r\'{e}seau et qui poss\`{e}de suffisament
de moments exponentiels.
Dans un article fondateur, \Aidekon \cite{Aidekon} a 
demontr\'{e} la convergence de $\eta_n^*$, apr\`{e}s recentrage, en distribution, et a donn\'{e}
une repr\'{e}sentation de la limite. Nous donnons ici une preuve plus courte de cette convergence en employant un raisonement motiv\'{e}
par Bramson, Ding et Zeitouni \cite{BDZ}.
Au lieu des methodes spinales et d'une analyse de la mesure de renouvellement pour la marche al\'{e}atoire tu\'{e},  notre m\'{e}thode
utilise une version modifi\'{e}e de la m\'{e}thode du deuxi\`{e}me moment, qui peut \^{e}tre d'int\'{e}r\^{e}t
ind\'{e}pendant. Nous indiquons les modifications n\'{e}cessaire pour traiter les marches al\'{e}atoires sur un r\'{e}seau.
\end{abstract}

\section{Introduction}
We consider nonlattice
one-dimensional 
branching random walk (BRW), $\{\eta_n\}_{n=0,1,2,\ldots}$, 
with offspring distribution
$\{p_i\}_{i=1,2,\ldots}$ and 
random walk increments $\{w(dy)\}_{y\in \mathbb{R}}$.  
The BRW is constructed in the usual inductive manner using 
$p_{\cdot}$ and $w(\cdot)$, with individuals of the $n$th generation moving
independently of each other according to $w(\cdot)$, 
from the site of their parent in the $(n-1)$st generation.
We denote by $\rho$ the mean of $p_{\cdot}$, by 
$\gamma_0$ the mean of
$w(\cdot)$, assume that $p_{\cdot}$ has finite second
moment, and that $w(\cdot)$ is nonlattice and
has exponential moments in an appropriate interval  
(which will be specified shortly), using the notation
\begin{equation}
\label{eq1.1}
K =\sum_i i^2p_i , \quad
\phi (\theta)=\int{e^{\theta y}w(dy)} \,. 
\end{equation}
(Here, nonlattice means that the support of $w(\cdot) + y$ 
is not contained in 
any discrete subgroup of $\mathbb{R}$ for any $y$.)
We denote by $V_n$ the set of $n$th generational offspring, with $\eta_{v,n}, v\in V_n$, being the positions of
these offspring, and set $\eta^*_n =\max_{v\in V_n}\eta_{v,n}$.

The limiting behavior of $\eta^*_n$, as $n\rightarrow\infty$, has been studied
since the early 1970s.  A strong law of large numbers for $\eta^*_n/n$ was 
first given in Kingman \cite{King}; see \Aidekon \cite{Aidekon} for 
general literature on the subject of branching random walk.   
In his  recent seminal paper,
\Aidekon \cite{Aidekon} has shown the sharp result that 
$\eta^*_n - (c_1 n - c_2 \log n)$ converges in distribution 
for appropriate $c_1$, $c_2$, which
depend on $p_{\cdot}$ and $w(\cdot)$; he also identified the limit as a 
Gumbel distribution shifted by a particular random variable,
the limit of the derivative martingale of the branching random walk.

The behavior of $\eta^*_n$ is related to the limiting 
behavior of the maximum of 
branching Brownian motion.  The latter problem traces its roots back to 
Kolmogorov, Petrovsky, and Piscounov \cite{KPP}  
and Fisher \cite{Fish}; sharp
results were obtained in Bramson \cite{BrMem}, and an identification
of the limit as a Gumbel distribution shifted by the derivative martingale
was obtained by
Lalley and Sellke \cite{LS}.  Results comparable to those in \cite{BrMem}
were obtained in the context of the two-dimensional discrete 
Gaussian free field in Bramson, Ding, and Zeitouni \cite{BDZ}.   
Here, we employ reasoning related to that in 
the last paper
to show convergence in distribution of 
$\eta^*_n$ after recentering, and to identify the limit.  

To state our main result, Theorem \ref{thm1.2}, we first introduce the
following terminology.  Let $I(\cdot)$ denote the rate function for
$w(\cdot)$, that is, for $\lambda > \gamma_0$,
\begin{equation}
\label{eq1.1newer}
 I(\lambda) = \sup_{\theta > 0}[\theta\lambda - \log \phi (\theta)]\,.
\end{equation}
Assume that
\begin{equation}
\label{eq1.2}
\log \rho \in \, \{I(\cdot)\}^\circ \,,\quad
c_1\in \, {\left\{(\log \phi)'(\cdot)\right\}}^\circ,
\end{equation}
where $c_1$ satisfies $I(c_1) = \log \rho$ 
(and $G^\circ$ 
denotes the interior of  $G$).
Then, 
$I(\cdot)$ is convex and differentiable in a neighborhood of $c_1$. 
Denote by $\bar{\theta}$ the value of $\theta$ 
at which the supremum in (\ref{eq1.1newer}) is taken for
$\lambda = c_1$, and set
$c_2 = 3/2\bar{\theta}$.  We then set $m_n = c_1 n - c_2 \log n$.
Also, set
\begin{equation*}
Z_k = \sum_{v\in V_k}(c_1k - \eta_{v,k})
\mathrm{e}^{-\bar{\theta}(c_1k - \eta_{v,k})}\,, 
\end{equation*}
and denote by $\mathcal{F}_k$ the $\sigma$-algebra generated by
the BRW up through time $k$.  

Our main result is the following theorem.
\begin{theorem}
\label{thm1.2}
Assume that $\eta_n$ is a nonlattice branching random walk
satisfying (\ref{eq1.2}), with $K < \infty$.  Then, 
$\eta^*_n - m_n$ converges in law 
as $n\rightarrow\infty$.  Moreover,
$Z = \lim_{k\rightarrow\infty}Z_k$
exists and is finite and positive with probability $1$, and there exists a
constant $\alpha ^*>0$ so that, for each $z\in \mathbb{R}$,
\begin{equation}
\label{eq1.4}
\lim_{k\rightarrow\infty}\lim_{n\rightarrow\infty}
\P(\eta^*_n \le m_n +z |\,\mathcal{F}_k)
= \exp\{-\alpha ^*Z \mathrm{e}^{-\bar{\theta}z}\} \quad \text{a.s.}
\end{equation}
\end{theorem}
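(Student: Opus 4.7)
The plan is to establish (\ref{eq1.4}) in three coordinated steps: convergence of the derivative martingale, a sharp uniform-in-$n$ right tail estimate for $\eta^*_n - m_n$, and a decomposition argument that exploits the branching property at time $k$. First I would check that $Z = \lim_{k\to\infty} Z_k$ exists, is finite, and is strictly positive almost surely. Under the hypothesis $K<\infty$ and the regularity conditions (\ref{eq1.2}), this is the standard convergence of the derivative martingale of a supercritical nonlattice branching random walk; what is needed is an $L\log L$-type moment bound that follows from $K<\infty$ together with the exponential moment assumption on $w(\cdot)$, and the strict positivity on the event of non-extinction requires only elementary tree surgery.

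The heart of the argument is a uniform tail estimate
\begin{equation*}
\P(\eta^*_n \ge m_n + y) = \bigl(\alpha^* + o(1)\bigr)\, y\, e^{-\bar\theta y}\quad\text{as }y\to\infty,
\end{equation*}
with the error uniform in $n$ large. For the upper bound I would use a \emph{truncated} first moment, counting particles $v\in V_n$ whose ancestral trajectory $\{\eta_{u,j}\}_{u\preceq v, j\le n}$ stays below the entropic envelope $c_1 j - c_2 \log (n\wedge (n-j+1)) + y$. Applying the exponential tilt by $\bar\theta$ converts the problem to a random walk conditioned to lie below a barrier, and the ballot estimate for such walks produces the factor $y e^{-\bar\theta y}$. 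For the matching lower bound I would carry out the modified second moment method indicated in the abstract: for the same truncated count $N(y)$, I would estimate $\E[N(y)^2]$ by splitting over the generation $j$ of the most recent common ancestor of the two particles. The truncation on the barrier forces pairs with intermediate $j$ to be rare, and the recent-ancestor pairs contribute at most a constant multiple of $\E[N(y)]$; a Paley--Zygmund application then yields the lower bound up to an absolute constant, and the resulting constant is matched to $\alpha^*$ by carefully tracking limits as $y\to\infty$.

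Third, I would apply the branching decomposition at time $k$: conditionally on $\mathcal{F}_k$ the subtrees rooted at $v\in V_k$ are independent, so
\begin{equation*}
\P\bigl(\eta^*_n \le m_n + z \mid \mathcal{F}_k\bigr) = \prod_{v\in V_k} \P\bigl(\eta^{*,v}_{n-k} \le m_n + z - \eta_{v,k}\bigr),
\end{equation*}
where the $\eta^{*,v}_{\cdot}$ are i.i.d.\ copies of the BRW maximum. Since $m_n - m_{n-k}\to c_1 k$ as $n\to\infty$, the argument of each factor is asymptotically $m_{n-k} + (z + c_1 k - \eta_{v,k})$, and the uniform tail estimate combined with $\log(1-x) = -x + O(x^2)$ gives
\begin{equation*}
-\log \P(\eta^*_n\le m_n+z\mid \mathcal{F}_k) \longrightarrow \alpha^* e^{-\bar\theta z}\sum_{v\in V_k}(c_1k-\eta_{v,k})e^{-\bar\theta(c_1k-\eta_{v,k})} = \alpha^* e^{-\bar\theta z} Z_k,
\end{equation*}
after $n\to\infty$. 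Sending $k\to\infty$ and invoking Step 1 replaces $Z_k$ by $Z$ and yields (\ref{eq1.4}); the contribution from particles with $c_1 k - \eta_{v,k}$ not large (where the crude tail approximation fails) is controlled by showing that such particles are asymptotically negligible both in count and in derivative-martingale mass, using standard estimates on the minimum displacement of BRW.

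The principal obstacle is the uniform second-moment estimate in Step 2. The barrier used in the truncation must be tuned so that the first moment is preserved up to a multiplicative constant while the dominant contributions to the second moment are the diagonal terms; getting estimates that are simultaneously uniform in $n\to\infty$ and $y\to\infty$, with the constant $\alpha^*$ arising as an explicit limit, is the technical replacement for the spine decomposition and killed-random-walk renewal theory used in \cite{Aidekon}.
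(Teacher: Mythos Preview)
Your three-step outline is structurally sound, but two points diverge from the paper in ways that matter.

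First, you treat the a.s.\ convergence and positivity of the derivative martingale $Z_k$ as an input (Step 1), citing standard $L\log L$ theory. The paper deliberately inverts this: it establishes convergence in law of $\eta^*_n-m_n$ using only that $\eta^*_k-c_1k\to-\infty$ in probability, obtains a limiting distribution function $w$, then builds the product martingale $W_n=\prod_{v\in V_n}w(c_1n-\eta_{v,n})$ and reads off a.s.\ convergence of $Z_k$ from $W_n\to W$ via $Z=-(\alpha^*)^{-1}\log W$; positivity of $Z$ comes from $w(-\infty)=0$. Your route is legitimate if one imports the Biggins--Kyprianou theorem, but the paper's self-contained derivation is one of its advertised points (Remark (2) after Theorem \ref{thm1.2}). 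A related slip: in your Step 3 display the $n\to\infty$ limit is $\alpha^*e^{-\bar\theta z}(Z_k+zY_k)$, not $\alpha^*e^{-\bar\theta z}Z_k$; the $zY_k$ term is removed only after $k\to\infty$ via $Y_k/Z_k\to 0$.

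Second, and more seriously, your Step 2 describes counting particles $v\in V_n$ whose ancestral trajectory lies below a barrier. That is exactly Lemma \ref{lem2.7}, and it yields only $\E[N(y)^2]\lesssim\E[N(y)]$, hence via Paley--Zygmund a lower bound with a constant strictly worse than the upper bound. Your phrase ``the resulting constant is matched to $\alpha^*$ by carefully tracking limits'' hides the actual mechanism: the paper counts particles $v'\in V_{n-\ell}$ (not $V_n$), with $\ell=\ell(z)\to\infty$, whose path stays below the barrier on $[0,n-\ell]$ \emph{and} which have a descendant above $m_n+z$ at time $n$; this is $\Lambda_{n,z}$ in (\ref{eq-big-definition}). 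The intermediate cutoff at depth $\ell$ forces the off-diagonal second-moment contribution to be $O(\ell^{-1/8})\E\Lambda_{n,z}$ (Lemma \ref{lem-second-moment}), so that $\E\Lambda_{n,z}^2/\E\Lambda_{n,z}\to 1$ and the upper and lower constants genuinely coincide. Without this two-level truncation your argument as written does not produce a sharp constant, and (\ref{eq1.4}) with the specific $\alpha^*$ does not follow.
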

\begin{remark}
  \begin{enumerate}
    \item Theorem \ref{thm1.2} is the analog of Theorem 1.1 
of \Aidekon \cite{Aidekon}. 
The latter paper has nearly optimal conditions on the branching and random
walk distributions, which we have not tried to duplicate here. 
\item
Our proof of Theorem \ref{thm1.2} 
is, we believe,
shorter and more elementary than that in \cite{Aidekon}, 
employing techniques developed in Ding and Zeitouni \cite{DiZ}
and Bramson, Ding, and Zeitouni \cite{BDZ}.
In particular, we do not use the 
convergence of the derivative martingale in the convergence in law proof, 
we do not 
use renewal theory (except to the extent that certain estimates from
random walk, developed in \cite{Car}, are used), and we 
do not work with the spine representation. Instead, we employ a variant
of the second moment method that is tailored toward deriving tail estimates
and involves a truncation that keeps only the leading particle
in each subtree of depth $k$ rooted at a vertex in $V_{n-k}$.
\item The result (\ref{eq1.4}) for branching 
Brownian motion dates back to Lalley and Sellke \cite{LS} 
and states that the limit can be
written as a random shift (by the limit of the so called
derivative martingale) of the 
Gumbel  distribution.  
\item When the first part of  \eqref{eq1.2} does not
hold (which is only possible if the support of
$w(\cdot)$ has a finite upper bound), 
non-standard centering and limit behavior is possible for 
$\eta_n^*$ (see, for example,
Bramson \cite{BM78}).
The second part of \eqref{eq1.2}  
ensures that, after an exponential change of measure that recenters
the measure at $c_1$, the resulting measure still possesses
exponential moments. 
\item We believe that the approach discussed in this paper allows one to 
also handle the case of lattice BRWs. We discuss this extension
in Section \ref{sec-lattice}.
\end{enumerate}
\end{remark}
An important part of the demonstration of Theorem \ref{thm1.2} involves 
showing that 
$\P(\eta_n^* - m_n > z) \sim \alpha^*ze^{-\bar{\theta} z}$ 
for large $z$, which is done in 
Proposition \ref{prop-limiting-tail-gff}.
(Here and later, we write $a_n(z)\sim b_n(z)$ if
$\lim_{z\to\infty}\limsup_{n\to\infty} a_n(z)/b_n(z)=
\lim_{z\to\infty}\liminf_{n\to\infty} a_n(z)/b_n(z)=1$.)
The long Section \ref{sec-limittail} is
devoted to showing this proposition, with the two main steps being Propositions
\ref{prop-gff-first-moment-dictates} and \ref{prop-asymptotic-first-moment}.  Proposition \ref{prop-gff-first-moment-dictates} compares
$\P(\eta_n^*- m_n > z)$ with an appropriate expectation corresponding to the number of
particles present at a time $n-\ell$, 
$\ell \ll n$,
that lie below a given boundary until then and that have at least one offspring above
$m_n +z$ at time $n$; 
related estimates are also present in \cite{Aidekon}.
The second moment estimates used here (in Proposition \ref{lem-second-moment}) 
are a more refined version of 
those used elsewhere in the branching literature. 
Proposition \ref{prop-asymptotic-first-moment}
then shows that this expectation is approximated by 
$\alpha^*ze^{-\bar{\theta} z}$.  

In the proof of Theorem \ref{thm1.2}, we divide the
evolution of $\{\eta_j\}_{0\le j\le n}$ into two time intervals, $[0,k]$ and $[k,n]$, first
letting $n\rightarrow\infty$ and then $k\rightarrow\infty$.  
At time $k$, we decompose the process 
$\{\eta_j\}_{j=0,1,2,\ldots}$ into $|V_{k}|$ processes, 
each given by a BRW 
$\{\eta^{v'}_{v,j}\}_{j=0,1,2,\ldots}$
 descending from $v' \in V_{k}$ and restarted at 
position $0$, i.e., 
\begin{equation}
\label{eq2.1}
\eta^{v'}_{v,j} = \eta_{v,j+k} - \eta_{v',k} \quad \text{for } 
v\in V^{v'}_j,
\end{equation}
where $V^{v'}_j = V^{v',k}_j $ denotes the set of $j$th generation descendents of $v^{\prime}$ 
in the $(j+k)$th generation of the BRW;
the processes $\{\eta^{v^{\prime}}_{j}\}_{j=0,\ldots,n'}$ will be
independent copies of $\{\eta_{j}\}_{j=0,\ldots,n'}$.

The first part of Theorem \ref{thm1.2} follows quickly
from Proposition \ref{prop-limiting-tail-gff}
together with the decomposition in (\ref{eq2.1}).
The limit (\ref{eq1.4}) of Theorem \ref{thm1.2} employs reasoning similar to that for Theorem 1 
of Lalley and Sellke \cite{LS}.  
Both results are proved in Section \ref{sec-maintheorem}.

In Section \ref{sec-prelim}, various technical results are 
demonstrated that will be needed in Sections \ref{sec-limittail}
and \ref{sec-maintheorem}.  Basic tools
for these results are the crossing probabilities for random walks of certain 
curves, which are given in Lemmas \ref{lemmaABRold} -- \ref{prop-1DRW}, with the proof of the last two being 
deferred to the appendix.

\noindent{\bf Notation.}
For functions $F(\cdot)$ and $G(\cdot)$, we write $F \lesssim G$ or $F = O(G)$
if there exists an absolute constant $C>0$ such that 
$F \leq C G$ everywhere in the domain, and $F \asymp G$ if 
$F \lesssim G$ and $G\lesssim F$.  We sometimes abbreviate $F(x) = o_x(1)$  
if $F(x) \rightarrow_{x\rightarrow\infty} 0$.
For functions $F,G$ of a real or integer variable, we write
$F\sim G$ if $F/G$ converges to $1$ as the variable tends to
infinity. Finally,
for $x\in \mathbb{R}$, $\lfloor x \rfloor$ denotes the
largest integer not greater than $x$.

\section{Preliminaries}
\label{sec-prelim}

\subsection{Some random walk inequalities}
\label{subsec-longlemma}

In this subsection, we state two lemmas, Lemma \ref{lemmaABR}  and Lemma \ref{prop-1DRW}, that give
bounds on the probability of mean zero random walks 
not crossing specified curves.  These lemmas will be applied repeatedly
in this section and the next.
  
For both lemmas, we will employ a version of the ballot
theorem that is a slight modification of that given in Theorem 1 of 
Addario-Berry and Reed \cite{ABR}.   Here, $\{X_k\}_{k=1,2,\ldots}$ denote
independent copies of mean zero random variables $X$, and 
$S_n = \sum_{k=1}^n X_k$; $X$ will also be assumed to be nonlattice.
\begin{lemma}
\label{lemmaABRold}
In addition to the above assumptions, assume that $X$ has finite  variance. 
For all $n$ and all $a,y\ge 0$, $b>a$,  there is a $C>0$, depending only
on the law of $X$ and on $b-a$, such that
\begin{equation}
\label{equationABR1}
\P(S_n\in(a, b), S_k >-y \text{ for all } 0<k<n) \le \frac{C (y\vee 1)((y+a)\vee 1)}{n^{3/2}}
\end{equation}
and such that, for all $a$ with $0\le a\le \sqrt{n}$,
\begin{equation}
\label{equationABR2}
\P(S_n \in (a, b), S_k >0 \text{ for all } 0<k<n) \ge \frac{(a\vee 1)}{Cn^{3/2}}.
\end{equation}

\end{lemma}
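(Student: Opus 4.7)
The lemma is essentially a restatement of the ballot-type estimate in Theorem 1 of Addario-Berry and Reed \cite{ABR}, which treats a mean-zero, finite-variance random walk started at $0$, constrained to stay above $0$, and landing in a bounded window at height $h$, and gives a matching upper and lower bound of order $(h\vee 1)/n^{3/2}$. The plan is to reduce both parts of the lemma to the ABR estimate by a deterministic shift of the walk, and then to identify the two endpoint factors explicitly.

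For the upper bound \eqref{equationABR1}, I would set $\tilde{S}_k \df S_k + y$, so that the event on the left becomes $\{\tilde{S}_n \in (y+a, y+b),\, \tilde{S}_k > 0 \text{ for all } 0 < k < n\}$ for a walk with the same increments started at $\tilde{S}_0 = y \ge 0$. Applying ABR's upper bound directly to this shifted walk produces an estimate of order $(y\vee 1)\,((y+a)\vee 1)/n^{3/2}$, multiplied (in the regime $y+a \gg \sqrt{n}$) by a harmless Gaussian tail factor $\exp(-c(y+a)^2/n) \le 1$. Finite variance and the nonlattice hypothesis on $X$ let the constant $C$ depend only on the window length $b-a$ via a local CLT over an interval of that length, uniformly in the location of the window.

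For the lower bound \eqref{equationABR2} the walk starts at $0$, so only the terminal factor survives; the assumption $0 \le a \le \sqrt{n}$ places the endpoint in the central regime, where the Gaussian density at height $a$ is bounded below by a positive constant. The matching lower bound from \cite{ABR} applied with target window $(a,b)$ then yields $(a\vee 1)/(Cn^{3/2})$, with the boundary case $a \in [0,1]$ handled by the classical ballot-type lower bound for a walk staying positive and landing in a unit window. The only real technical point is that ABR's result is most naturally stated for lattice walks landing at a specific integer, whereas here the increments are nonlattice and the target is a short interval; this gap is bridged by a standard nonlattice local CLT that converts mass on a unit-length window into the required density bounds, after which the proofs go through exactly as in \cite{ABR}. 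I expect this nonlattice reduction to be the only delicate step; the shift argument itself is routine.
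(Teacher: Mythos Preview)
Your reduction for the upper bound \eqref{equationABR1} does not actually land on Theorem~1 of \cite{ABR}. After the shift $\tilde S_k = S_k + y$ you obtain a walk with $\tilde S_0 = y$, constrained to stay positive and to end in $(y+a,y+b)$. But Theorem~1 of \cite{ABR}, as the paper explicitly notes, is stated only for $y=0$, i.e.\ for a walk started at the origin; it does not contain a starting-point factor. So the shift is tautological: you have simply rewritten the event, not reduced it to the cited result, and the appearance of the factor $(y\vee 1)$ is unexplained. The paper handles this by reopening the proof of \cite{ABR} rather than quoting the theorem: it keeps the three-interval decomposition $[0,n/4]$, $[n/4,3n/4]$, $[3n/4,n]$ and, on the first and third pieces, invokes Lemma~3(iii) of \cite{ABR} (the first-passage bound $\P(\min_{k\le m} S_k \ge -y') \lesssim (y'\vee 1)/\sqrt{m}$) in its general form with $y'=y$ and $y'=y+a$, instead of the special case $y'=0$. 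Multiplying by the $O(n^{-1/2})$ density bound on the middle interval gives the two factors $(y\vee 1)((y+a)\vee 1)$.

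Your treatment of \eqref{equationABR2} is fine and matches the paper: this \emph{is} the $y=0$ case of \cite{ABR} verbatim, so nothing needs to be done. The lattice/nonlattice issue you flag is not the crux here; the substantive missing step is producing the starting-point factor in \eqref{equationABR1}, and for that you need the first-passage ingredient from inside the ABR proof, not the theorem statement itself.
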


Lemma \ref{lemmaABRold}  differs from Theorem 1 of 
Addario-Berry and Reed \cite{ABR} only in that (\ref{equationABR1})
is phrased here for general $y>0$, rather than just for $y=0$ as in the paper.  
The proof of (\ref{equationABR1})
remains essentially the same as in \cite{ABR}:
The time interval $[0,n]$ is divided into three parts, $[0,n/4]$,
$[n/4,3n/4]$, and $[3n/4,n]$.  
For both the first and third subintervals, Lemma 3 (iii) of \cite{ABR}, 
which gives an upper bound on the first time at which $S_k < -y'$, $y'\ge 0$, is applied in its
general form, rather than being restricted to $y'=0$ for the 
first subinterval as in the paper.  As in \cite{ABR},
for the middle term, one employs an upper bound on the density of 
$S_{3n/4} - S_{n/4}$.   The three upper bounds are 
then multiplied together to give
(\ref{equationABR1}).

In our applications, the above random walk $\{S_k\}_{k=0,\ldots,n}$ will correspond to the
random walk obtained by first subtracting $c_1 k$ from the random walk
underlying our BRW, and then tilting the corresponding measure 
so that the mean of the random walk associated with the tilting
is $0$.   Since Theorem \ref{thm1.2} instead requires the
nonlinear centering $m_n$ at time $n$,  
which differs from $c_1 n$ by $c_2 \log n$, 
we will in practice apply the following perturbation of
Lemma \ref{lemmaABRold}, which instead bounds the random walk
$\{S_k^{(n)}\}_{k=0,\ldots,n}$ defined below.
Note that, for $d^{(n)}=0$, $S_k^{(n)} \stackrel{d}{=} S_k$.

\begin{lemma}
\label{lemmaABR}
Let $X$ and $S_n$ be as above, and in addition assume that 
$\E (\mathrm{e}^{\theta X}) < \infty$, for 
$|\theta| \le \theta_0$ for some $\theta_0 >0$. 
Set $S_k^{(n)} = \sum_{i=1}^k X_i^{(n)}$, where 
$X_i^{(n)} = X_i + d^{(n)}$.  Assume that either $d^{(n)} >0$ for all $n$, that
$d^{(n)} <0$ for all $n$, or that $d^{(n)} \equiv 0$, with in each case $|d^{(n)}| \le c(\log n)/n$ for some
$c>0$.  Define the probability measure $\P^{(n)}$, on paths in $[0,n]$, by
\begin{equation}
\label{eq-change-of-measure-0}
\frac{d\mathbb{P}^{(n)}}{d\mathbb{P}} = 
\frac{\mathrm{e}^{-\theta^{(n)}S_n}}
{\mathbb{E}({\mathrm{e}^{-\theta^{(n)}S_n }})},
\end{equation}
with $\theta^{(n)}$ being chosen so that
$\mathbb{E}^{(n)}(X_1^{(n)}) = 0$.
Then $S_k^{(n)}$ satisfies the analogs of 
(\ref{equationABR1}) and (\ref{equationABR2}), 
with $\mathbb{P}^{(n)}$ replacing $\mathbb{P}$ and the
constants $C$ depending on $c$.
(We will refer to these inequalities as (\ref{equationABR1}) and (\ref{equationABR2})
as well.)
\end{lemma}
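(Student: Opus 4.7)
The plan is to reduce Lemma \ref{lemmaABR} to Lemma \ref{lemmaABRold} by observing that $\{S_k^{(n)}\}_{0\leq k\leq n}$ under $\mathbb{P}^{(n)}$ is itself an i.i.d.\ mean-zero random walk whose increment law is a sufficiently small perturbation of the law of $X$ that the constants in the Addario-Berry--Reed proof outlined right after Lemma \ref{lemmaABRold} can be taken uniformly in $n$. The case $d^{(n)}\equiv 0$ reduces to the original lemma verbatim, so I focus on $d^{(n)}\neq 0$.

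First, I would pin down the tilt $\theta^{(n)}$. The defining relation $\mathbb{E}^{(n)}(X_1^{(n)})=0$ is equivalent to $\phi'(-\theta^{(n)})+d^{(n)}\phi(-\theta^{(n)})=0$. Since $\phi$ is smooth near $0$ with $\phi'(0)=0$ and $\phi''(0)=\sigma^2:=\var(X)>0$ (using the exponential moment hypothesis), the implicit function theorem yields $\theta^{(n)}=d^{(n)}/\sigma^2+O((d^{(n)})^2)=O((\log n)/n)$, with constants depending only on $c$ and the law of $X$. Consequently, the law of $X_1^{(n)}$ under $\mathbb{P}^{(n)}$ converges weakly to that of $X$: its variance tends to $\sigma^2$, its exponential moments remain uniformly finite in a slightly shrunken neighborhood of $0$, and its characteristic function converges uniformly on compact sets to that of $X$. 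Nonlattice-ness is preserved, since shifts and exponential tilts do not alter the support structure.

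With these uniformities in hand, I would apply the three-step Addario-Berry--Reed proof verbatim to $\{S_k^{(n)}\}$ under $\mathbb{P}^{(n)}$. The overshoot estimate of Lemma 3(iii) of \cite{ABR}, used on $[0,n/4]$ and on $[3n/4,n]$, depends on the increment law only through its variance and a few low-order moments, all uniformly controlled above. The middle step needs a uniform density bound of the form $|p_{S_{3n/4}^{(n)}-S_{n/4}^{(n)}}(x)|\leq C/\sqrt{n}$ under $\mathbb{P}^{(n)}$, which I would derive from a standard local CLT argument. Multiplying the three factors and summing over the allowed endpoints recovers the analog of (\ref{equationABR1}); the analog of (\ref{equationABR2}) is obtained by the same route from the matching lower bound in \cite{ABR}.

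The main obstacle is precisely this uniform local CLT: one must show that the characteristic function $\hat{\mu}_n$ of $X_1^{(n)}$ under $\mathbb{P}^{(n)}$ satisfies $|\hat{\mu}_n(t)|\leq 1-\delta$ on every fixed compact annulus $\{t_0\leq |t|\leq T\}$, uniformly in $n$, together with a standard quadratic Gaussian bound near $0$. The shift by $d^{(n)}$ only multiplies $\hat{\mu}$ by a unimodular phase, and the tilt by $-\theta^{(n)}$ perturbs it analytically in a parameter of size $O((\log n)/n)$. Since the nonlattice assumption on $X$ forces $|\hat{\mu}(t)|<1$ on each such annulus, uniform continuity in the perturbation preserves a strict inequality uniformly in all large $n$; the finitely many small $n$ can be absorbed into $C$. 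Once this is in place, all remaining estimates are routine adaptations of those in \cite{ABR}, completing the proof.
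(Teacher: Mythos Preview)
Your approach is sound and genuinely different from the paper's. You treat $\{S_k^{(n)}\}$ under $\P^{(n)}$ as a mean-zero i.i.d.\ walk whose increment law is an $O((\log n)/n)$ perturbation of that of $X$, and then rerun the Addario-Berry--Reed argument uniformly in $n$; the key technical point you correctly isolate is a local-CLT/concentration bound that is uniform over the tilt parameter, which follows from uniform control of the characteristic function on compact annuli. (One small imprecision: you speak of a ``density'' bound $|p_{S_{n/2}^{(n)}}(x)|\le C/\sqrt{n}$, but $X$ need only be nonlattice, so what you actually want is the concentration-function bound $\P^{(n)}(S_{n/2}^{(n)}\in[x,x+h))\le C_h/\sqrt{n}$; your characteristic-function argument gives exactly this.)

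The paper takes a different route. Rather than re-examining \cite{ABR} for uniformity, it writes $\P^{(n)}$-probabilities for $S_k^{(n)}$ as $\P$-probabilities for the original walk $S_k$ with a drifted boundary and a Radon--Nikodym factor $\gamma^{(n)}$ that is harmless on the relevant range (equation~(\ref{eqjg2.1})), and then proves a comparison lemma (Lemma~\ref{lemjg}) that shows two such drifted-boundary probabilities differ negligibly. This lemma splits $[0,n]$ into three blocks and uses the Feller asymptotic expansion on a short middle block to absorb the drift discrepancy; (\ref{equationABR1}) then falls out as the $h\equiv 0$ case of (\ref{eq-S-y-new}), and (\ref{equationABR2}) is obtained by combining (\ref{eqjg2.1}) with (\ref{eqjg1.1}) and the untilted case. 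The payoff of the paper's heavier machinery is that the same comparison (Corollary~\ref{corjg}) simultaneously delivers the exact asymptotics in Lemma~\ref{prop-1DRW}, where matching constants $\beta_{y,a,b}$ are needed rather than order-of-magnitude bounds. Your direct approach is cleaner for Lemma~\ref{lemmaABR} in isolation but would not by itself give those sharp limits.
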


Lemma \ref{lemmaABR}, together with the following lemma, will be proved
in the appendix. 
Here, $h(\cdot)$ is a non-negative function such 
that $h(0)=0$, and $h(n) \leq C' \log (n+1)$ for a given constant 
$C'>0$ and all $n\in \mathbb{Z}_+$.  
\begin{lemma}\label{prop-1DRW}
Let $X$, $S_n$, $S_k^{(n)}$, $\P^{(n)}$, $d^{(n)}$, and $c$ be as in Lemma \ref{lemmaABR}.  
For any $y\geq 1$ and $-y+1\leq a <b<\infty$, 
there exists $\beta_{y, a, b} > 0$ such that 
\begin{equation}
\label{eq-S-y}
\lim_{n\rightarrow\infty} n^{3/2}\P^{(n)}(S_n^{(n)}\in (a, b), 
S_k^{(n)}\geq -y \mbox { for all } 0 < k < n) = 
\beta_{y, a, b} 
\end{equation}
with, for some $\beta^* > 0$, 
\begin{equation}
\label{eq-S-z}
\lim_{y,y+a \rightarrow\infty} \beta_{y, a, b}/(b-a)y(y+a) = \beta^*
\end{equation} 
if $b-a > 0$ is fixed as $y,a\rightarrow\infty$;
$\beta_{y,a,b}$ is continuous in $a$ and $b$ and right continuous in $y$.
Furthermore, there exists $\delta_{\bar{y}}$, 
with $\delta_{\bar{y}}\searrow 0 $ as 
$\bar{y}:= y  \wedge (y +a) \nearrow \infty$, such that, 
for $-y +1\le a<b< \infty$, 
\begin{equation}\label{eq-S-y-gamma}
\limsup_{n\to \infty} n^{3/2}\P^{(n)}(S_n^{(n)}\in (a, b), S_k^{(n)}\geq -y -  
y^{1/10}-h(k \wedge (n-k)) \mbox { for all } 0 < k < n) \leq \beta_{y, a, b} (1+ \delta_{\bar{y}})\,.
\end{equation}
If, in addition, $h(\cdot)$ is 
increasing
and concave, then, for fixed $\varepsilon= b-a > 0$,
 there exist $C>0$ and $n_{\varepsilon} \in \mathbb{Z}_+$ such that,
for $n\ge n_{\varepsilon}$, $ n/2 \le j \le n$, $y\ge 1$, 
and $-y - h(n-j) +1\le a < b< \infty$,
\begin{equation}
\label{eq-S-y-new}
n^{3/2}\P^{(n)}(S_j^{(n)}\in (a, b), S_k^{(n)}\geq -y - h(k \wedge (n-k)) \mbox { for all } 
0 < k < j) \leq Cy(y+a + h(n-j))\,.
\end{equation}
If $S_k^{(n)} \ge -y$ is replaced by the strict inequality $S_k^{(n)} > -y$ in (\ref{eq-S-y}), and the
analogous change is made in (\ref{eq-S-y-gamma}), then the analogs of (\ref{eq-S-y}) and 
(\ref{eq-S-y-gamma}) continue to hold for appropriate $\beta_{y,a,b}^o \,$, which is continuous in $a$ 
and $b$, and left continuous in $y$.  
None of the terms $\beta_{y,a,b}$, $\beta_{y,a,b}^o$, $\delta_{\bar{y}}$, 
$C$ and $n_{\varepsilon}$ depends on $d^{(n)}$, for fixed $c$.
\end{lemma}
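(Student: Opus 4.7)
The plan is to reduce Lemma \ref{prop-1DRW} to local ballot theorems of Caravenna~\cite{Car} and to absorb the logarithmic perturbations by a dyadic entropic-repulsion argument. First observe that, under $\P^{(n)}$, the walk $\{S_k^{(n)}\}$ has mean zero (by the defining property of $\theta^{(n)}$), nonlattice step distribution, finite variance, and finite exponential moments near $0$; moreover $|\theta^{(n)}| = O((\log n)/n)$, so the step law converges to that of $X$ as $n\to\infty$, and the ladder-height renewal functions and local CLT densities associated with $\P^{(n)}$ can be taken uniform in $n$ for $|d^{(n)}| \le c(\log n)/n$.

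To prove (\ref{eq-S-y}), I would split $[0,n]$ at the midpoint $n/2$, condition on $S_{n/2}^{(n)} = x$, and apply the local ballot asymptotics of Caravenna to each half. The left half contributes a density of order $U(y)g(x)/n^{3/2}$, with $U$ the ascending ladder-height renewal function of the $\P^{(n)}$ walk and $g$ the normal local CLT density; after time reversal the right half contributes an analogous factor involving the descending ladder-height renewal function $V$ evaluated near $y+a$. Integrating over $x\in (a,b)$ yields (\ref{eq-S-y}), and continuity of $U,V$ in the nonlattice setting yields the claimed continuity of $\beta_{y,a,b}$ in $a,b$ and its one-sided continuity in $y$. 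The strict-inequality variant is obtained by the same argument with the roles of open and closed boundary interchanged, which reverses the one-sided continuity in $y$. The limit (\ref{eq-S-z}) then follows from the renewal-theorem asymptotics $U(y)\sim c_U y$ and $V(y+a)\sim c_V(y+a)$.

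The perturbed-barrier bound (\ref{eq-S-y-gamma}) is the heart of the lemma and is handled by entropic repulsion: conditioned on the event in (\ref{eq-S-y}), the walk at time $k$ sits at height of order $\sqrt{k\wedge(n-k)}$ with Gaussian transverse fluctuations of the same order, so lowering the barrier by the logarithmic quantity $y^{1/10}+h(k\wedge(n-k))$ should only matter near the endpoints. Quantitatively, I would partition $[0,n]$ into dyadic scales $2^j \le k\wedge(n-k) < 2^{j+1}$ and, on each scale, use (\ref{equationABR1}) applied to the initial and to the time-reversed terminal segments to bound the probability that the walk crosses the shifted barrier while still ending in $(a,b)$; summing the dyadic contributions yields a correction factor $1+\delta_{\bar y}$ with $\delta_{\bar y}\to 0$. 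The regime $k\wedge(n-k)\le y^{1/5}$ is absorbed by the $y^{1/10}$ slack, which dominates $h$ on that range. The inequality (\ref{eq-S-y-new}) is obtained by the same dyadic scheme applied on $[0,j]$, using monotonicity and concavity of $h$ to absorb cross terms into the single shift $y\mapsto y+h(n-j)$ in (\ref{equationABR1}).

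The main obstacle is the uniformity of $\delta_{\bar y}$ in (\ref{eq-S-y-gamma}) across the profile of $h$, the perturbation $d^{(n)}$, and $n$. Producing the bound $\delta_{\bar y}\to 0$ requires a delicate balance between the $y^{1/10}$ slack and the losses incurred by applying (\ref{equationABR1}) on each dyadic scale, and it is this balance that dictates the choice of the exponent $1/10$.
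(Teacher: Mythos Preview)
Your plan for (\ref{eq-S-y}) and (\ref{eq-S-z}) --- split at the midpoint, apply Caravenna to each half, and read off the constants from renewal asymptotics --- is essentially what the paper does, though the paper builds the half-line constant $\bar\beta_y$ from scratch via a decomposition on the location $\tau$ of the overall minimum rather than invoking ladder-height renewal functions directly. For (\ref{eq-S-y-gamma}) your dyadic scheme in $k\wedge(n-k)$ is a genuinely different slicing: the paper instead decomposes on $\tau$ into the three ranges $\tau\le y^{19/10}$, $\tau\in(y^{19/10},y^7)$, and $\tau\in(y^7,n-y^7)$. Both aim at the same repulsion heuristic; the paper's choice is convenient because conditioning on the global minimum immediately factorises each piece into two positive-walk events to which (\ref{equationABR1}) applies cleanly, whereas a dyadic first-crossing argument must also track the value at the crossing. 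For (\ref{eq-S-y-new}) the paper does exactly what you propose, via the concavity inequality $h(k\wedge(n-k))\le 2h(k\wedge(j-k))+(k/j)h(n-j)$, which reduces to the case $j=n$ after an additional linear tilt.

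The real gap in your outline is the treatment of the $d^{(n)}$ perturbation. You assert that since $|\theta^{(n)}|=O((\log n)/n)$ the Caravenna asymptotics and ladder-height constants ``can be taken uniform in $n$.'' That is a triangular-array statement --- applying Caravenna to a step law that itself depends on $n$ --- and does not follow from the theorem as stated; justifying it would require reopening Caravenna's proof and tracking uniformity in the step distribution. The paper avoids this entirely: it first proves (\ref{eq-S-y})--(\ref{eq-S-y-new}) for $d^{(n)}\equiv 0$, and then transfers the result to general $d^{(n)}$ by a pathwise comparison (Lemma~\ref{lemjg} and Corollary~\ref{corjg}) that splits $[0,n]$ into three pieces, handles the long initial piece by monotonicity of the barriers, and uses an Edgeworth expansion only on a short window of length $n^{1/12}$ to absorb the $O(\log n)$ shift at the endpoint. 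This route also exposes a subtlety you miss: the limit $\beta_{y,a,b}$ can depend on the \emph{sign} of $d^{(n)}$ (for $d^{(n)}>0$ it coincides with the strict-inequality constant $\beta_{y,a,b}^o$ of the untilted walk, for $d^{(n)}<0$ with the non-strict one), a distinction a soft ``uniform Caravenna'' argument would blur.
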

It follows from (\ref{eq-S-y}) and (\ref{eq-S-y-gamma})  that
the ratio of the probabilities in these two displays lies within
$[1,1+2\delta_y]$ for given $y$, $a\ge 0$, and large enough $n$.  
Since both 
probabilities are increasing in $y$, this ratio also holds uniformly 
for $y'\in [y,y+M]$ and fixed $M>0$.  A similar
observation holds, as $n$ increases, for $\beta_{y, a, b}/y(y+a)$
with large (but bounded) values of $y$ and $a$, if $b-a$ is fixed. 
Note that the limits $\beta_{y,a,b}$ and $\beta_{y,a,b}^o$ may
depend on the sign of $d^{(n)}$, although $\beta^*$ will not.

\subsection{Preliminary bounds on the right tail of the maximum of BRW}

In this subsection, we give preliminary upper 
(Corollary \ref{lem-prelim-tail})  and lower (Lemma \ref{lem2.7}) 
bounds on the right tail of the maximum of BRW.  We first introduce
some terminology.

Throughout the paper, we will write $\{\eta_{v,n}(k)\}_ {k=0,1,\ldots,n}$ 
for the random walk where $\eta_{v,n}(k)$ is
the position of the $k$th generation individual in the 
family tree of individuals leading to
$v\in V_n$;
recall that $\eta_{v,n}(k+1)-\eta_{v,n}(k)$, $k=0,\ldots,n-1$, 
each have law $w(\cdot)$.
We also set $\bar{\eta}_{v,n}(k) = \eta_{v,n}(k) - km_n/n$.

For $\beta>0$, set
\begin{align}\label{eq-def-G-N-prelim}
G_{n,\beta} &= \bigcup_{v\in V_n}\bigcup_{0 \leq k \leq n}\{\bar{\eta}_{v, n}(k) \geq \beta + (4/\bar{\theta}) (\log(k \wedge (n-k)))_+\}\,,
\end{align}
where $\bar{\theta}$ is defined below (\ref{eq1.2}).
We also set
$g_{n,\delta}(i) = \exp\{-\delta |i|(\frac{|i|}{n \log n}\wedge1)\}$, where $\delta > 0$
is a constant that will be specified shortly.  

In order to show Corollary \ref{lem-prelim-tail}, we first obtain,
in Lemma \ref{lem-a-priori}, an upper bound 
on the probability that BRW takes atypically large values 
over $[0,n]$.  Lemma \ref{lem-a-priori} will also be applied in 
Section \ref{sec-limittail}.
\begin{lemma}\label{lem-a-priori}
There exists a constant $\delta >0$ such that
$\P(G_{n,\beta})  \lesssim \beta \mathrm{e}^{-\bar{\theta} \beta}g_{n,\delta}(\beta)$ for all $n\ge 2$ and $\beta \ge 1$.
\end{lemma}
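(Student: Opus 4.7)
The plan is to union-bound over the time $k$ at which some ancestral path of a particle $v\in V_n$ first crosses the curve $y_j:=\beta+(4/\bar\theta)(\log(j\wedge(n-j)))_+$, reduce to a first-moment random walk calculation via many-to-one, and control the resulting probability by an exponential tilt combined with the ballot-type estimates of Section~\ref{subsec-longlemma}.

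If some $v\in V_n$ has $\bar\eta_{v,n}(j)>y_j$ for some $j\le n$, its ancestor $u\in V_k$ at the first such $j=k$ satisfies $\tau_u=k$, the first-passage time of $u$'s ancestral path across the curve. By many-to-one,
$$\P(G_{n,\beta})\le\sum_{k=1}^n\E\bigl[\#\{u\in V_k:\tau_u=k\}\bigr]=\sum_{k=1}^n\rho^k\,\P(\tau=k),$$
where $\tau$ on the right denotes the first-passage time of a random walk with step law $w$. Using $\rho\phi(\bar\theta)=e^{\bar\theta c_1}$ (from $I(c_1)=\log\rho$) and $\bar\theta c_2=3/2$, an exponential tilt by approximately $\bar\theta$ (as in Lemma~\ref{lemmaABR}) converts this into
$$\rho^k\P(\tau=k)=n^{(3/2)k/n}\,\widetilde\E\bigl[e^{-\bar\theta\bar\eta(k)}\1_{\tau=k}\bigr],$$
with $\widetilde\P$ the mean-zero tilted measure. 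On $\{\tau=k\}$ we have $\bar\eta(k)\ge y_k$, so extracting $e^{-\bar\theta y_k}=e^{-\bar\theta\beta}(k\wedge(n-k))^{-4}$ leaves a residual factor $e^{-\bar\theta(\bar\eta(k)-y_k)_+}$ that still controls the overshoot.

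The residual quantity $\widetilde\E[e^{-\bar\theta(\bar\eta(k)-y_k)_+}\1_{\tau=k}]$ is bounded using Lemma~\ref{prop-1DRW}: one conditions on $\bar\eta(k-1)$, applies \eqref{eq-S-y-new} to the part of the path restricted to stay below the log-tent, and uses the exponential tail on the single last step (which must push the walk from below to above $y_k$) to absorb the overshoot factor. The output is a bound of order $\beta/n^{3/2}$ for $k$ close to $n$ and of order $\beta/k^{3/2}$ for smaller $k$, both up to logarithmic corrections. Summing, the contribution from $k\le n/2$ is dominated by small $k$ and contributes $O(\beta e^{-\bar\theta\beta})$; for $k>n/2$ the growth $n^{(3/2)k/n}\uparrow n^{3/2}$ is cancelled by the $n^{-3/2}$ from the ballot bound, leaving a convergent sum in $j=n-k$. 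Combined, this yields $\P(G_{n,\beta})\lesssim\beta e^{-\bar\theta\beta}$.

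For $\beta$ of order $\sqrt{n\log n}$ or larger, the one-sided inequality $e^{-\bar\theta\bar\eta(k)}\le e^{-\bar\theta y_k}$ discards significant decay, since the walk must then move much more than its typical fluctuation $\sqrt{n}$ to realize the event. To capture this, one retains the full integrand and invokes a one-step large-deviation estimate on the increments of $w(\cdot)$, using the exponential moments just beyond $\bar\theta$ provided by \eqref{eq1.2}; this extracts the extra factor $g_{n,\delta}(\beta)=\exp\{-\delta\beta(\beta/(n\log n)\wedge 1)\}$. The main obstacle is the range $k$ near $n$: the naive Markov bound there gives $n^{3/2}e^{-\bar\theta\beta}$, and only the sharp form of Lemma~\ref{prop-1DRW}, with its $y(y+a+h(n-j))$ prefactor, delivers the required $\beta/n^{3/2}$ cancellation.
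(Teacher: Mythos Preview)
Your plan is essentially the paper's own proof: union-bound over the first crossing time, many-to-one, exponential tilt near $\bar\theta$, the ballot estimate \eqref{eq-S-y-new} of Lemma~\ref{prop-1DRW} for the path constrained below the log-tent, and the exponential tail of a single increment for the crossing step. The only discrepancies are cosmetic: for $j<n/2$ the paper forgoes the ballot estimate in favor of the cruder bound \eqref{eqnulowj}, and the extra factor $g_{n,\delta}(\beta)$ is obtained not from a ``one-step large-deviation estimate on the increments'' but from a moderate-deviation bound \eqref{eq23here} on the \emph{position} $\bar\eta_{v,n}(j)$ when it sits far below $\beta$ (i.e.\ when $i<\beta-C\sqrt{n\log n}$), which is where Markov's inequality with a tilt slightly beyond $\theta_n$ produces the $\exp\{-\delta i(i/n\wedge 1)\}$ decay.
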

For many of the applications in the next section, the weaker 
bound $\P(G_{n}(\beta))  \lesssim \beta \mathrm{e}^{-\bar{\theta} \beta}$ will suffice.  We remark that one can show the 
bound in Lemma \ref{lem-a-priori} 
still holds if the denominator $n\log n$ in $g_{n,\delta}(\cdot)$ is
replaced by $n$ (by using the Skorokhod embedding), 
although we have not done so here.

\begin{proof} [Proof of Lemma \ref{lem-a-priori}]
For given $v\in V_n$, we define the 
probability measure $\mathbb{Q}^{(n)}$, on paths in $[0,n]$, by
\begin{equation}
\label{eq-change-of-measure-1}
\frac{d\mathbb{P}}{d\mathbb{Q}^{(n)}} := 
\mathrm{e}^{-\theta_n \bar \eta_{v,n}(n) - nI(m_n/n)}
= (1 + O(\frac{1}{n}\log^2n))\,
n^{3/2}\rho^{-n}\mathrm{e}^{-\theta_n \bar \eta_{v,n}(n)} \,,
\end{equation}
where $I(\lambda)$ is the rate function in (\ref{eq1.1newer}) and 
$\theta_n = \theta_n (m_n/n)$ is the value of
$\theta$ at which the supremum in (\ref{eq1.1newer}) is
taken when $\lambda = m_n/n$.  The second equality
is a consequence of the definition of $m_n$ and the
differentiability of $I (\cdot)$, which imply that 
\begin{equation}
\label{eq-postPQ}
0 \le \bar{\theta} - \theta_n \lesssim \frac{1}{n}\log n \,,
\end{equation}
and of $I'(c_1) =\bar{\theta}$. 

For $0\leq k\leq n$, write $\psi_{n, \beta}(k) = \beta+
(4/\bar{\theta}) (\log (k \wedge (n-k)))_+$ and set
\begin{equation*}
\begin{split}
\chi_{n, j}^{\mathbb{P}}(i) &= \mathbb{P}(\bar \eta_{v,n}(k) \leq \psi_{n, \beta}(k) \mbox{ for all } k\leq j, \,\bar \eta_{v,n}(j) \in [i-1,i) ) \,,\\
\chi_{n, j}^{\mathbb{Q}^{(n)}}(i) &= \mathbb{Q}^{(n)}(\bar \eta_{v,n}(k) \leq \psi_{n, \beta}(k) \mbox{ for all } k\leq j, \,\bar \eta_{v,n}(j) \in [i-1,i) )\,.
\end{split}
\end{equation*}
By an elementary union bound,
\begin{align}
\label{eq-G-N-v}
\P(G_{n,\beta}) &\leq \sum_{j=0}^{n-1} \rho^{j+1}  
\sum_{i=-\infty}^{\lfloor\psi_{n,\beta}(j)+1\rfloor} 
\chi_{n, j}^{\mathbb{P}}(i) \P( i + \bar \eta_{v,n}(j+1) - 
\bar \eta_{v,n}(j) \geq \psi_{n, \beta}(j+1))\,.
\end{align}
We will obtain upper bounds for each of the two factors inside the inner sum
in  (\ref{eq-G-N-v}); the bound in (\ref{eq-nugen}) for the first factor
$\chi_{n, j}^{\mathbb{P}}(i)$ requires most of the work.   

For $\chi_{n, j}^{\mathbb{P}}(i)$,
we will need an upper bound on
$\chi_{n, j}^{\mathbb{Q}^{(n)}}(i)$, for which we consider the probability
measure $\mathbb{Q}$ on paths in $[0,n]$ defined by
$$\frac{d\mathbb{P}}{d\mathbb{Q}} = 
\mathrm{e}^{-\bar{\theta} (\eta_{v,n}(n) - c_1 n) - nI(c_1)},$$
for given $v\in V_n$.
Note that, under $\mathbb{Q}$, 
$\{\eta_{v,n}(k) - c_1 k\}_{k=0,1,\dots,n}$ is a mean zero random walk
that satisfies the moment assumptions of Lemma \ref{lemmaABR}, and, 
under $\mathbb{Q}^{(n)}$, 
$\{\bar{\eta}_{v,n}(k)\}_{k=0,1\ldots,n}$ is also a mean zero random walk.
Setting $S_k = \eta_{v,n}(k) - c_1 k$ and $S_k^{(n)} = \bar{\eta}_{v,n}(k)$,
one has $S_k^{(n)} = S_k +c_2 k (\log n)/n$, and $\mathbb{Q}$ and
$\mathbb{Q}^{(n)}$ satisfy the analog of (\ref{eq-change-of-measure-0})
for $\theta^{(n)}$ chosen as in the lemma.
The assumptions of Lemma \ref{prop-1DRW} are therefore satisfied
for $S_k$ and $S^{(n)}_k$, and 
consequently, by \eqref{eq-S-y-new} of Lemma 
\ref{prop-1DRW},
\begin{equation}
\label{eqnu0}
\chi_{n, j}^{\mathbb{Q}^{(n)}}(i) \lesssim j^{-3/2} 
\psi_{n, \beta}(0) (\psi_{n, \beta}(j) - i+2)
\end{equation}
for $i\leq \psi_{n, \beta}(j) +1$ and $n/2\le j \le n$.

Since $\chi_{n, j}^{\mathbb{P}}(i) \lesssim 
\frac{d \P}{d \mathbb{Q}^{(n)}}|_j ([i-1, i))\chi_{n, j}^{\mathbb{Q}^{(n)}}(i)$,
(\ref{eqnu0}), together with \eqref{eq-change-of-measure-1},
(\ref{eq-postPQ}),
and $\frac{1}{n}\log n \le \frac{1}{j}\log j$, implies that, for given $C>0$, 
\begin{equation}\label{eq-nu}
\chi_{n, j}^{\mathbb{P}}(i) \lesssim 
\psi_{n, \beta}(0) (\psi_{n, \beta}(j) - i+2) \rho^{-j} \mathrm{e}^{-\theta_n i}   
\lesssim  \beta (\psi_{n, \beta}(j) - i+2)\rho^{-j} \mathrm{e}^{-\bar{\theta} i} 
\end{equation}
for $\beta -C \sqrt{n \log n} \le i \le (\psi_{n,\beta}(j) +1) \wedge (n/\log n)$ and
$n/2\le j \le n$.
When $0\le j < n/2$, instead of $n/2\le j \le n$, is assumed
and the same range of $i$ is kept, one obtains 
 from \eqref{eq-change-of-measure-1} the
simpler
\begin{equation}
\label{eqnulowj}
\chi_{n, j}^{\mathbb{P}}(i) \le \P(\bar \eta_{v,n}(j) \in [i-1,i) )\lesssim 
j \rho^{-j} \mathrm{e}^{-\bar{\theta} i}\, ; 
\end{equation}
we will denote this last collection of pairs $(i,j)$ by $A_n$.
(Later on, the term $\rho^{-j}$ in (\ref{eq-nu}) and \eqref{eqnulowj} 
will cancel with the corresponding 
prefactor in (\ref{eq-G-N-v}), and $\mathrm{e}^{-\bar{\theta} i}$ will cancel with the
corresponding term in (\ref{eq-to-add-details-1}).)
The bound on the right hand sides of (\ref{eq-nu})  and \eqref{eqnulowj}
still holds after
multiplication by $g_{n,\delta}(i)$ on the right (because of the above
lower bound on $i$).
On the other hand, since the distribution of 
$w(\cdot)$ has exponential moments in a neighborhood
of $\bar{\theta}$, it will follow from
a moderate deviation estimate (using Markov's inequality) 
and (\ref{eq-postPQ}) that, for both $i < \beta -C \sqrt{n \log n}$
and $n/\log n < i \le \psi_{n,\beta}(j) +1$,
\begin{equation}
\label{eq23here}
\chi_{n, j}^{\mathbb{P}}(i) \le \P(\bar \eta_{v,n}(j) \in [i-1,i) )\lesssim 
j \rho^{-j} \mathrm{e}^{-\theta_n i} \exp\{-\delta i(\frac{i}{n}\wedge 1)\} \le 
 \rho^{-j} \mathrm{e}^{-\bar{\theta} i} g_{n,\delta}(i)
\end{equation}
for small enough $\delta >0$ and large enough $C$. 
(See, e.g., Dembo and Zeitouni \cite[Theorem 3.7.1]{DZ} for the moderate deviation
estimate.)  Grouping  (\ref{eq-nu}) and (\ref{eq23here}) together, one obtains
\begin{equation}
\label{eq-nugen}
\chi_{n, j}^{\mathbb{P}}(i) \lesssim \begin{cases}
\beta (\psi_{n, \beta}(j) - i+2)\rho^{-j} 
\mathrm{e}^{-\bar{\theta} i} g_{n,\delta}(i) & \mbox{for } (i,j)  \notin A_n  \,, \\
j\rho^{-j} \mathrm{e}^{-\bar{\theta} i} g_{n,\delta}(i) & \mbox{for } (i,j)  \in A_n  \,.
\end{cases}
\end{equation}

For the upper bound of the second factor in (\ref{eq-G-N-v}), note that,
since $w(\cdot)$ has exponential moments 
in a neighborhood of $\bar{\theta}$,
\begin{align}\label{eq-to-add-details-1}
\P(i + \bar \eta_{v,N}(j+1) - \bar\eta_{v,N}(j)
  \geq \psi_{n, \beta}(j+1))\lesssim 
\exp\{-(\bar{\theta}+\delta')(\psi_{n,\beta}(j+1) - i)\}\,
\end{align}
for some $\delta'>0$.

Plugging (\ref{eq-nugen}) and (\ref{eq-to-add-details-1}) 
into (\ref{eq-G-N-v}) 
and summing over the inner sum implies that
\begin{equation}
\label{eq25here}
\P(G_{n,\beta}) \lesssim \sum_{j=0}^{n} \beta ( (j+1) \wedge (n+1-j))^{-2}
 \mathrm{e}^{-\bar{\theta}\beta} g_{n,\delta}(\beta) 
\lesssim \beta \mathrm{e}^{-\bar{\theta} \beta} g_{n,\delta}(\beta) 
\end{equation}
for small enough $\delta >0$ and all $\beta \ge 1$,
where the power $-2$ in $((j+1) \wedge (n+1-j))^{-2}$ 
is obtained from the term
$(4/\bar{\theta}) (\log (j \wedge (n-j)))_+$.  
This completes the proof of the lemma.
\end{proof}

Our main application of Lemma \ref{lem-a-priori} in this section is the following
upper bound on $\P(\eta_n^*> m_n + z )$.  Let 
$\theta^*_n:=  \bar{\theta}$ for $z \le n$ and 
$\theta^*_n:=  \bar{\theta} + \delta$ for  $z > n$.  The upper bound
involving $\theta_n^*$, in Corollary \ref{lem-prelim-tail}, 
will suffice except in two places
((\ref{eqellbd}) and (\ref{eq4.51new}) of Lemma \ref{lem-second-moment}).
\begin{cor}\label{lem-prelim-tail}
For appropriate $\delta > 0$ and all  $n,z\geq 2$,
\begin{equation}\label{eq-large-tail}
\P(\eta_n^*> m_n + z ) 
\lesssim z \mathrm{e}^{-\bar{\theta} z}g_{n,\delta}(z)\,.
\end{equation}
In particular, 
$\P(\eta_n^*> m_n + z ) \lesssim z \mathrm{e}^{-\theta^*_n z}$
for all  $n,z\geq 2$.
\end{cor}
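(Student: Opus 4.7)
The plan is to deduce the first bound as a direct consequence of Lemma \ref{lem-a-priori}, and then to obtain the sharpened $\theta_n^*$ statement by distinguishing the two regimes $z \le n$ and $z > n$.

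First, I observe that $\{\eta_n^* > m_n + z\} \subseteq G_{n,z}$. Indeed, the union over $k$ in (\ref{eq-def-G-N-prelim}) includes the endpoint $k = n$, where the correction $(\log (k \wedge (n-k)))_+$ vanishes, so the constraint there reduces to $\bar\eta_{v,n}(n) \ge \beta$. Since $\eta_n^* > m_n + z$ means $\bar\eta_{v,n}(n) > z$ for some $v \in V_n$, the event falls into $G_{n,z}$. Applying Lemma \ref{lem-a-priori} with $\beta = z$ then yields
\[
\P(\eta_n^* > m_n + z) \le \P(G_{n,z}) \lesssim z \, \mathrm{e}^{-\bar{\theta} z} g_{n,\delta}(z),
\]
proving \eqref{eq-large-tail}.

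For the ``in particular'' statement, I would split on the size of $z$. When $z \le n$, the exponent in $g_{n,\delta}(z)=\exp\{-\delta z(z/(n\log n)\wedge 1)\}$ is non-positive, so $g_{n,\delta}(z) \le 1$ and \eqref{eq-large-tail} directly yields $\lesssim z\,\mathrm{e}^{-\bar{\theta} z}$, matching $\theta_n^*=\bar\theta$. When $z > n$, I would \emph{not} rely on Lemma \ref{lem-a-priori}: in the intermediate window $n < z \le n\log n$ it only produces $\mathrm{e}^{-\delta z^2/(n\log n)}$, which is weaker than the required $\mathrm{e}^{-\delta z}$. Instead I would combine the offspring-tree union bound with a direct exponential Chebyshev estimate, writing
\[
\P(\eta_n^* > m_n + z) \le \rho^n \, \P(S_n > m_n + z) \le \rho^n \, \mathrm{e}^{-(\bar{\theta}+\delta')(m_n+z)}\,\phi(\bar\theta+\delta')^n,
\]
where $S_n$ is a sum of $n$ i.i.d.\ copies of $w(\cdot)$ and $\delta' > 0$ is small enough that $\bar\theta+\delta'$ lies in the analytic domain of $\phi$, guaranteed by \eqref{eq1.2}.

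The prefactor in $n$ is $\exp\{n[\log\rho + \log\phi(\bar\theta+\delta') - (\bar\theta+\delta')c_1] + (\bar\theta+\delta')c_2\log n\}$. Using $\log\rho = I(c_1) = \bar{\theta} c_1 - \log\phi(\bar\theta)$ and $(\log\phi)'(\bar\theta) = c_1$, the bracket vanishes to first order at $\delta'=0$, hence is $O((\delta')^2)$. For $z > n$ and $\delta'$ chosen sufficiently small, the $O(n(\delta')^2) = O(z(\delta')^2)$ correction is absorbed into a fraction of $\delta' z$, and the residual $n^{O(1)}$ factor is absorbed by a further tiny reduction of the exponent, yielding $\lesssim z\,\mathrm{e}^{-(\bar{\theta}+\delta)z}$ for an appropriate $\delta > 0$. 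The main subtlety lies in this intermediate window, which the direct Chernoff route cleanly bypasses; the remaining task is simply to align the constants so that a single $\delta$ serves both conclusions.
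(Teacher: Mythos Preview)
Your proof of \eqref{eq-large-tail} is identical to the paper's: the paper simply notes $\{\eta_n^*>m_n+z\}\subseteq G_{n,z}$ and invokes Lemma~\ref{lem-a-priori}.

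For the ``in particular'' statement, the paper's proof is silent; it treats the entire corollary as immediate from Lemma~\ref{lem-a-priori}. You are correct that this is not quite so: in the window $n<z<n\log n$ one has $g_{n,\delta}(z)=\exp\{-\delta z^2/(n\log n)\}$, and for $z$ of order $n$ this gives only $\exp\{-O(\delta z/\log n)\}$, which does not dominate $e^{-\delta z}$ uniformly. The paper's remark after Lemma~\ref{lem-a-priori} (that the denominator $n\log n$ can be replaced by $n$ via Skorokhod embedding) would close this gap and make the ``in particular'' genuinely immediate, but the paper explicitly declines to carry that out. Your direct Chernoff bound for $z>n$ is a clean and valid alternative: the second-order vanishing of $\log\rho+\log\phi(\bar\theta+\delta')-(\bar\theta+\delta')c_1$ at $\delta'=0$ is exactly the point, and the absorption of the $O(n(\delta')^2)$ and $n^{O(1)}$ factors into the exponent when $z>n$ is routine. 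So on this second part you are in fact more careful than the paper.
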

\begin{proof}
Since $\{\eta_n^*> m_n + z\} \subseteq G_{n,z}$, the bound in (\ref{eq-large-tail})
follows immediately from Lemma~\ref{lem-a-priori}.
\end{proof}

The following result is a quick consequence of 
Corollary \ref{lem-prelim-tail} and the definition of $m_n$.
\begin{cor}
\label{cor-new}
For appropriate $\delta > 0$, and all
$2\le \ell \le \sqrt{n}$ and $z\ge -\log \ell + 1$,
\begin{equation}\label{eq-large2}
\P(\eta_{\ell}^* > \ell m_n/n + z ) \lesssim \ell^{-3/2}(z+\log \ell) \mathrm{e}^{-\bar{\theta}z} g_{\ell,\delta}(z)\,.
\end{equation}
In particular,
$\P(\eta_{\ell}^* > \ell m_n/n + z ) \lesssim \ell^{-3/2}(z+\log \ell) \mathrm{e}^{-\theta^*_{\ell} z}$.
\end{cor}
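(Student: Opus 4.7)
The plan is a direct reduction to Corollary \ref{lem-prelim-tail} applied with depth $\ell$. I would introduce
\[
z' \df z + c_2 \log \ell - c_2 \tfrac{\ell}{n}\log n,
\]
so that $\ell m_n/n + z = m_\ell + z'$, and hence $\P(\eta_\ell^* > \ell m_n/n + z) = \P(\eta_\ell^* > m_\ell + z')$. Because $\ell \le \sqrt n$, one has $c_2 (\ell/n)\log n \le c_2 (\log n)/\sqrt n = O(1)$ uniformly in $n$, so $z' = z + c_2 \log \ell + O(1)$.

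In the main case $z' \ge 2$, I would apply Corollary \ref{lem-prelim-tail} with $\ell$ in the role of $n$ to obtain
\[
\P(\eta_\ell^* > m_\ell + z') \lesssim z' \mathrm{e}^{-\bar\theta z'} g_{\ell,\delta}(z').
\]
Using the identity $\bar\theta c_2 = 3/2$, which comes straight from $c_2 = 3/(2\bar\theta)$, the exponential factor unwinds as $\mathrm{e}^{-\bar\theta z'} = \mathrm{e}^{-\bar\theta z}\mathrm{e}^{-(3/2)\log \ell}\mathrm{e}^{O(1)} \lesssim \ell^{-3/2}\mathrm{e}^{-\bar\theta z}$, while $z' \lesssim z + \log \ell$, using the hypothesis $z + \log \ell \ge 1$ to absorb the $O(1)$ term. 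A short case-split on whether the $\wedge 1$ in the definition of $g$ is active shows $g_{\ell,\delta}(z') \lesssim g_{\ell,\delta'}(z)$ for some $\delta' \in (0, \delta)$, relying only on $|z'| \le |z| + O(\log \ell)$.

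For the boundary case $z' < 2$, the trivial bound $\P \le 1$ suffices, and it remains to check that the target right-hand side is already $\gtrsim 1$. From $z' < 2$ one obtains $-\bar\theta z \ge \bar\theta c_2 \log \ell + O(1) = (3/2)\log \ell + O(1)$, whence $\mathrm{e}^{-\bar\theta z} \gtrsim \ell^{3/2}$; combined with $z + \log \ell \ge 1$ and the observation that $|z| \le \log \ell$ forces the exponent in $g_{\ell,\delta}(z)$ to be $O(1/\ell) = o(1)$ (so $g_{\ell,\delta}(z) \asymp 1$), the product $\ell^{-3/2}(z+\log \ell)\mathrm{e}^{-\bar\theta z} g_{\ell,\delta}(z) \gtrsim 1$ as required.

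Finally, the \emph{in particular} statement with $\theta_\ell^*$ follows from the first bound by the same dichotomy used in Corollary \ref{lem-prelim-tail}: $g_{\ell,\delta}(z) \le 1$ always (giving $\theta_\ell^* = \bar\theta$ for $z \le \ell$), and for $z > \ell$ the factor $g_{\ell,\delta}(z)$ absorbs an additional exponential (possibly with a smaller $\delta$) that converts $\mathrm{e}^{-\bar\theta z}$ into $\mathrm{e}^{-\theta_\ell^* z}$. The main obstacle I anticipate is the clean comparison of $g_{\ell,\delta}(z')$ with $g_{\ell,\delta'}(z)$; everything else is bookkeeping organized around $\bar\theta c_2 = 3/2$ and the uniform smallness of $(\log n)/\sqrt n$.
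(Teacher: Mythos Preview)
Your approach is correct and is precisely what the paper intends: the paper states only that the result is ``a quick consequence of Corollary~\ref{lem-prelim-tail} and the definition of $m_n$,'' and your reduction via $z' = z + c_2\log\ell - c_2(\ell/n)\log n$ together with the identity $\bar\theta c_2 = 3/2$ is exactly that. Your treatment is in fact more detailed than the paper's (which gives no proof), and you have correctly flagged the only nontrivial step, the comparison of $g_{\ell,\delta}(z')$ with $g_{\ell,\delta'}(z)$.
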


The last result of the section gives a lower bound 
on the right tail of the maximum of BRW.
For $v, w \in V_n$, we say that $v$ and $w$ \emph{split}
at time $j_s = n-s$, denoted by $v\sim_s w$,
if $s$ is the maximal integer
such that $\{\eta_{v,n}(j) - \eta_{v,n}(j_s): j_s\leq j\leq n\}$
is independent of $\{\eta_{w,n}(j) - \eta_{w,n}(j_s): j_s\leq j\leq n\}$,
i.e., the last common ancestor of $v$ and $w$ occurs at time $j_s$.
\begin{lemma}
\label{lem2.7}
For all $n$ and $z$ satisfying $z\le \sqrt{n}$, 
\begin{equation}
\label{eq2.7}
\P(\eta_n^* > m_n + z ) \gtrsim z\mathrm{e}^{-\bar{\theta} z}\,.
\end{equation}
\end{lemma}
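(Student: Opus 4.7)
I would apply a second-moment (Paley--Zygmund) argument to a restricted count of particles whose ancestral trajectory is confined to a tube. Since the bound is trivial for $z \le 1$ by monotonicity, assume $z \ge 1$, and define
\[
N_n = \#\{v \in V_n : \bar{\eta}_{v,n}(n) \in [z, z+1],\ -1 \le \bar{\eta}_{v,n}(k) \le z + C(\log(k \wedge (n-k)))_+ \text{ for all } 0 < k < n\}
\]
for a suitable constant $C > 0$. The inclusion $\{N_n \ge 1\} \subset \{\eta_n^* > m_n + z\}$ reduces the problem to showing $(\E N_n)^2/\E N_n^2 \gtrsim z \mathrm{e}^{-\bar{\theta} z}$, which in turn follows from $\E N_n \gtrsim z \mathrm{e}^{-\bar\theta z}$ combined with $\E N_n^2 \lesssim \E N_n$.

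For the first moment, by many-to-one $\E N_n = \rho^n \P(\text{single-walk event})$. Applying the exponential tilt $d\P/d\mathbb{Q}^{(n)} = \mathrm{e}^{-\theta_n \bar\eta_{v,n}(n) - nI(m_n/n)}$ used in the proof of Lemma \ref{lem-a-priori}, under which $S^{(n)}_k := \bar\eta_{v,n}(k)$ becomes a mean-zero random walk, one has $\rho^n \mathrm{e}^{-nI(m_n/n)} \asymp n^{3/2}$ (since $\bar\theta c_2 = 3/2$) and $\mathrm{e}^{-\theta_n S^{(n)}_n} \asymp \mathrm{e}^{-\bar\theta z}$ on the defining event. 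Hence $\E N_n \asymp n^{3/2}\mathrm{e}^{-\bar\theta z}\,\mathbb{Q}^{(n)}(A')$, where $A'$ denotes the tilted event. The ballot lower bound \eqref{equationABR2} of Lemma \ref{lemmaABR}, applied with $a = z \le \sqrt{n}$, gives $\mathbb{Q}^{(n)}(A') \gtrsim z/n^{3/2}$; the upper barrier is non-binding here since it is easily satisfied by those trajectories realizing the lower bound. Therefore $\E N_n \gtrsim z \mathrm{e}^{-\bar\theta z}$.

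For the second moment, the many-to-two decomposition over the last common generation $n-s$ yields
\[
\E N_n^2 \le \E N_n + \sum_{s=1}^n (K-\rho)\rho^{n+s-2}\, \E\bigl[\mathbf{1}_A(X^1)\mathbf{1}_A(X^2)\bigr],
\]
where $X^1, X^2$ share a common path $Y$ of length $n-s$ and then have independent extensions of length $s$. Conditioning on the split height $x := Y_{n-s}$, one applies the ballot upper bound \eqref{equationABR1} of Lemma \ref{lemmaABR} once to the tilted common walk (endpoint near $x$, barrier $-1$) and once to each tilted extension (endpoint in $[z,z+1]$, barrier $-1$), producing factors of order $(x+2)/(n-s)^{3/2}$ and $(x+2)(z-x+2)/s^{3/2}$, respectively. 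After reinserting the tilt factors $\mathrm{e}^{-\bar\theta x}$ and $\mathrm{e}^{-\bar\theta(z-x)}$, summing $x$ over the range $[-1, z+C\log(n-s)]$ dictated by the upper barrier, and summing over $s$, the estimate collapses to $\E N_n^2 \lesssim z\mathrm{e}^{-\bar\theta z}\asymp \E N_n$. Paley--Zygmund then gives $\P(\eta_n^* > m_n + z) \ge \P(N_n \ge 1) \ge (\E N_n)^2/\E N_n^2 \gtrsim \E N_n \gtrsim z \mathrm{e}^{-\bar\theta z}$.

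The main obstacle is the second-moment estimate: the challenge is controlling those split times $s$ for which the common ancestor rises well above $z$ before the two independent descendants descend back into the narrow band $[z,z+1]$. The upper barrier $z + C\log(k\wedge(n-k))$ is calibrated to suppress these high-lying common trajectories without disturbing the first-moment computation, and this is the crucial ingredient that delivers $\E N_n^2 \lesssim \E N_n$ rather than the cruder $\E N_n^2 \lesssim z\E N_n$ (the latter would yield only the weaker conclusion $\P(\eta_n^* > m_n + z) \gtrsim \mathrm{e}^{-\bar\theta z}$, losing the linear factor $z$).
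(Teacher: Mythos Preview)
Your overall strategy (second-moment/Paley--Zygmund on a barrier-restricted count, many-to-two for the second moment) matches the paper's. The gap is in the first-moment lower bound, specifically the claim that ``the upper barrier is non-binding here since it is easily satisfied by those trajectories realizing the lower bound.'' This claim is false, and it is exactly where your choice of barriers diverges from the paper's in a way that breaks the argument.

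Under $\mathbb{Q}^{(n)}$, $\bar\eta_{v,n}(k)$ is a mean-zero random walk. Conditioned only on $\bar\eta_{v,n}(k)\ge -1$ for all $k$ and $\bar\eta_{v,n}(n)\in[z,z+1]$, this walk behaves (for $z\le\sqrt{n}$) like a Brownian-excursion/bridge hybrid: its typical height at time $k\approx n/2$ is of order $\sqrt{n}$, not of order $z$ or $\log n$. Hence the upper barrier $z+C(\log(k\wedge(n-k)))_+$ is violated by the overwhelming majority of such trajectories, and the two-barrier $\mathbb{Q}^{(n)}$-probability is much smaller than $z/n^{3/2}$ (indeed it is exponentially small in $n/(z+\log n)^2$ when $z$ is bounded). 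So $\E N_n\gtrsim z\mathrm{e}^{-\bar\theta z}$ fails for your $N_n$. You cannot simply drop the upper barrier either: as you yourself note, without it the sum over the split height $x$ in the second moment has weight $\mathrm{e}^{\bar\theta x}$ and diverges.

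The paper avoids this tension by putting the single barrier on the \emph{other} side: it takes $H_{v,n}(z)=\{\bar\eta_{v,n}(k)\le z\text{ for all }k\le n-1,\ \bar\eta_{v,n}(n)\in(z,z+1]\}$, i.e., an upper barrier at the constant level $z$ and no lower barrier. After time reversal this becomes a walk staying above $0$ and ending near $z$, so \eqref{equationABR2} gives the first-moment lower bound directly. For the second moment the upper barrier forces the common ancestor to sit at height $z-i$ with $i\ge 0$, and the resulting sum over $i$ carries the convergent weight $\mathrm{e}^{-\bar\theta i}$ rather than the divergent $\mathrm{e}^{\bar\theta x}$. Replacing your tube $[-1,\,z+C\log(\cdot)]$ by the half-line $(-\infty,z]$ fixes the proof with essentially the same bookkeeping you already wrote down.
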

The argument for Lemma \ref{lem2.7}
involves well-known second moment estimates.  
More precise second moment estimates will be 
shown in Proposition \ref{prop-limiting-tail-gff}.
\begin{proof}[Proof of Lemma \ref{lem2.7}]
It suffices to show (\ref{eq2.7}) over $1\le z\le \sqrt{n}$.  For $v\in V_n$,
set 
\begin{equation*}
H_{v,n}(z) = \{\bar{\eta}_{v,n}(k) \le z
\text{ for all } k\le n-1, \, \bar{\eta}_{v,n}(n) \in (z, z+1]\}
\end{equation*}
and $\Delta_{n,z} = \sum_{v\in V_n} \one_{H_{v,n}(z) }$.
We will apply the elementary bound
\begin{equation}
\label{eq2.8}
\P (\eta^*_n > m_n +z) \ge (\E\Delta_{n,z})^2/\E(\Delta_{n,z})^2\,,
\end{equation}
which is a consequence of Jensen's inequality.

We obtain a lower bound on $\E\Delta_{n,z}$ by employing the change of measure in
(\ref{eq-change-of-measure-1}) as was done immediately below (\ref{eqnu0}),
but reversing the inequalities there and applying (\ref{equationABR2}) instead of 
(\ref{eq-S-y-new})
for the lower bound corresponding to (\ref{eqnu0}) 
(and with $z$ in place of $\psi_{n,\beta}(\cdot)$).
Multiplying this bound by $\rho^n$, we obtain
\begin{equation}
\label{eq2.9}
\E\Delta_{n,z} \gtrsim z\mathrm{e}^{-\theta_nz} 
\ge z\mathrm{e}^{-\bar{\theta} z} \,,
\end{equation}
with the first inequality holding for $1\le z \le \sqrt{n}$. 

For the upper bound on $\E(\Delta_{n,z})^2$, we employ the
decomposition
\begin{equation}
\label{eq2.10}
\E(\Delta_{n, z})^2 = 
\E\Delta_{n, z}  + 
K^*\rho^{n-2}\sum_{s=1}^n \rho^s\,\P(H_{v, n}(z) \cap H_{w, n}(z)
\text{ for } v\sim_s w )\,,
\end{equation}
where $K^* =K-\rho$, and $K$ is defined in (\ref{eq1.1}).
Set $J_i = z+ (-i-1,-i]$. 
Conditioning on the value at $\bar{\eta}_{v,n}(n-s)$,
one has, for $v\sim_s w$,
\begin{equation*}
\begin{split}
&\P(H_{v, n}(z) \cap H_{w, n}(z))\le
\sum_{i=0}^{\infty} 
\P(\bar{\eta}_{v,n}(k) \le z \text{ for } k < n-s; \, 
\bar{\eta}_{v,n}(n-s)\in J_i) \\
&\times  (\sup_{y\in J_i}
(\P (\bar{\eta}_{v,n}(n-s+j)\le z \text{ for } j< s;\,\bar{\eta}_{v,n}(n)\in 
(z,z+1]) \,|\, \bar{\eta}_{v,n}(n-s)=y))^2\,.
\end{split}
\end{equation*}
By the same reasoning as in  
(\ref{eq-change-of-measure-1})--(\ref{eq-nu})
for $\chi^{\mathbb{P}}_{n,j}(\cdot)$, one obtains upper
bounds (up to multiplicative constants) 
for the probabilities on the right hand side of 
the above display: two applications of (\ref{eq-S-y-new}) yield
the bound
$$ \frac{zi}{((n-s)\vee 1)^{3/2}}   \cdot (\frac{i}{s^{3/2}})^2  $$
and two applications of (\ref{eq-change-of-measure-1}) yield
$$\mathrm{e}^{-\theta_n(z+i)} \rho^{-(n+s)}n^{3/2}  
\exp\{((3/2)s\log n)/n\} \,,$$
with the summands in the above display being bounded 
by the product of these two quantities.
For $1 \le z \le\sqrt{n}$, substituting these bounds into the 
above display and factoring out
the terms not involving $i$ gives the sum, 
$\sum_{i=0}^{\infty} i^3 \mathrm{e}^{-\theta_n i} \le
\sum_{i=0}^{\infty} i^3 \mathrm{e}^{-\bar{\theta}i/2} < \infty$.
Consequently,
\begin{equation*}
\begin{split}
K^*\rho^{n-2}\sum_{s=1}^n \rho^s \, \P(H_{v, n}(z) \cap H_{w, n}(z)
\text{ for }v\sim_s w )
&\lesssim
z\mathrm{e}^{-\bar{\theta}z} \sum_{s=1}^n 
\frac{n^{3/2}\exp\{(3/2)(s/n)\log n\}} 
{((n-s)\vee 1)^{3/2}s^3} \\
&\lesssim z\mathrm{e}^{-\bar{\theta}z} \sum_{s=1}^{\infty} s^{-3/2}
\lesssim z\mathrm{e}^{-\bar{\theta}z} \lesssim \E\Delta_{n, z}\,,
\end{split} 
\end{equation*}
where the second inequality uses $(s/n)\log n \le \log s$ and
$(n-s)\vee s \ge n/2$, and the last inequality follows from (\ref{eq2.9}).
It therefore follows from (\ref{eq2.10}) that 
$\E(\Delta_{n, z})^2 \lesssim \E\Delta_{n, z}$.  This, together with
(\ref{eq2.8}), (\ref{eq2.9}) and (\ref{eq2.10}), implies (\ref{eq2.7}) and completes the proof 
of the lemma.
\end{proof}

\section{The limiting right tail of the maximum of BRW}
\label{sec-limittail}

The main result of this  section is the following proposition.  
\begin{prop}\label{prop-limiting-tail-gff}
There exists a constant  $\alpha^*>0$ such that
\begin{equation}
\label{eqforP3.1}
\lim_{z\to \infty}\limsup_{n\to \infty}|z^{-1} \mathrm{e}^{\bar{\theta}z}\P(\eta^*_n >  m_n +z) - \alpha^*| =0\,.
\end{equation}
\end{prop}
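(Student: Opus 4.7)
The plan is to split the proof of~\eqref{eqforP3.1} into the two pieces outlined by the authors: a truncated second-moment identification, and an asymptotic evaluation of the truncated first moment. Fix a truncation level $\beta>z$ and an intermediate depth $\ell$; all limits are taken in the order $n\to\infty$, $\ell\to\infty$, $\beta\to\infty$, $z\to\infty$. For each $v\in V_{n-\ell}$, let $A_v^\beta$ denote the event that $\bar\eta_{v,n-\ell}(k)\le\psi_{n,\beta}(k):=\beta+(4/\bar\theta)(\log(k\wedge(n-\ell-k)))_+$ for all $0\le k\le n-\ell$, and let $B_v^z$ denote the event that at least one $\ell$th-generation descendant of $v$ lies above $m_n+z$. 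Set
\[
N:=\sum_{v\in V_{n-\ell}}\one_{A_v^\beta}\one_{B_v^z}.
\]
Since $\{\eta_n^*>m_n+z\}\subseteq\{N\ge 1\}\cup G_{n,\beta}$ and $\beta>z$, Lemma~\ref{lem-a-priori} yields
\[
\P(\eta_n^*>m_n+z)=\P(N\ge 1)+O\!\bigl(\beta\,\mathrm{e}^{-\bar\theta\beta}\bigr),
\]
with error $o(z\mathrm{e}^{-\bar\theta z})$ under the iterated limit. It therefore suffices to prove $\P(N\ge 1)\sim\E N\sim\alpha^*z\mathrm{e}^{-\bar\theta z}$.

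The first task (Proposition~\ref{prop-gff-first-moment-dictates}) is to establish $\P(N\ge 1)\sim\E N$. Since $\P(N\ge 1)\le\E N$ is trivial, the Paley--Zygmund inequality $\P(N\ge 1)\ge(\E N)^2/\E N^2$ reduces the matter to $\E N^2=(1+o_z(1))(\E N)^2$. Decomposing
\[
\E N^2=\E N+K^*\rho^{n-\ell-2}\sum_{s=1}^{n-\ell}\rho^{s}\,\P\!\bigl(A_v^\beta\cap A_w^\beta\cap B_v^z\cap B_w^z,\ v\sim_s w\bigr),
\]
I would condition on the height at the split time $n-\ell-s$, tilt the shared trunk via~\eqref{eq-change-of-measure-1}, and use the ballot estimate~\eqref{eq-S-y-new} to force the trunk through the narrow window cut out by $A_v^\beta\cap A_w^\beta$. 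Corollary~\ref{lem-prelim-tail} applied independently to the two depth-$(s+\ell)$ subtrees bounds $\P(B_v^z\cap B_w^z\mid\text{split height})$ by $\lesssim(z+\log s)^2\mathrm{e}^{-2\bar\theta z}$, and summation over $s$ produces a bound of the same order as $(\E N)^2$. I expect this second-moment estimate to be the main obstacle: splits with $s$ close to $n-\ell$ and with $s\ll\ell$ both contribute at the marginal order, and the logarithmic correction in $\psi_{n,\beta}$ is what makes the two regimes decay at compatible rates; this is the quantitative content of Proposition~\ref{lem-second-moment}.

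The second task (Proposition~\ref{prop-asymptotic-first-moment}) is the explicit asymptotic evaluation of $\E N$. Writing
\[
\E N=\rho^{n-\ell}\,\E\!\bigl[\one_{A_v^\beta}\,q_\ell^{(n)}\bigl(\bar\eta_{v,n-\ell}(n-\ell)\bigr)\bigr],\qquad q_\ell^{(n)}(y):=\P\bigl(\eta_\ell^*>m_n+z-y-\ell m_n/n\bigr),
\]
and applying the exponential tilt~\eqref{eq-change-of-measure-1} to convert $\bar\eta_{v,n-\ell}(\cdot)$ to the centered walk $S^{(n)}_k$ of Lemma~\ref{prop-1DRW} (with drift $|d^{(n)}|\lesssim(\log n)/n$), one obtains
\[
\E N\asymp n^{3/2}\int\mathrm{e}^{\bar\theta y}\,q_\ell^{(n)}(y)\,\mathbb{Q}^{(n)}\!\bigl(S^{(n)}_k\le\psi_{n,\beta}(k)\ \forall\,k,\ S^{(n)}_{n-\ell}\in dy\bigr).
\]
Reversing inequalities in the barrier, the local limit~\eqref{eq-S-y}--\eqref{eq-S-z} of Lemma~\ref{prop-1DRW}, combined with~\eqref{eq-S-y-gamma} to absorb the logarithmic part of $\psi_{n,\beta}$, yields for each fixed $\beta,\ell$
\[
\lim_{n\to\infty}\E N=\beta^*\beta\int\mathrm{e}^{\bar\theta y}(\beta-y)\,q_\ell(y)\,dy,
\]
where $q_\ell$ is the pointwise limit of $q_\ell^{(n)}$ and Corollary~\ref{cor-new} provides the domination needed for the interchange. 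Finally, substituting $u=z-y$ and using Lemma~\ref{lem2.7} and Corollary~\ref{lem-prelim-tail} to show $q_\ell(z-u)\asymp u\mathrm{e}^{-\bar\theta u}$ uniformly in $\ell$, one extracts the leading factor $z\mathrm{e}^{-\bar\theta z}$ and takes the iterated limit $z\to\infty$, $\beta\to\infty$, $\ell\to\infty$. The $\beta$-dependence drops out by monotone convergence as the integrand concentrates in a bounded region, and the $\ell\to\infty$ limit produces a positive finite constant $\alpha^*$ (positivity from Lemma~\ref{lem2.7}, finiteness from Lemma~\ref{lem-a-priori}). Combined with the first stage, this proves~\eqref{eqforP3.1}.
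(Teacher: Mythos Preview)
Your outline has the right two-stage architecture, but the second-moment step contains a genuine error and your barrier choice differs from the paper's in a way that breaks the argument.

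\textbf{The second-moment target is mis-stated.} You want $\P(N\ge 1)\sim\E N$, and since $N$ is $\mathbb{Z}_+$-valued with $\E N\asymp z\mathrm{e}^{-\bar\theta z}\to 0$, you have $\E N^2\ge\E N\gg(\E N)^2$; your claimed target $\E N^2=(1+o_z(1))(\E N)^2$ is therefore impossible. The correct target (Lemma~\ref{lem-second-moment}) is $\E N^2=(1+o_z(1))\,\E N$: the diagonal term $\E N$ in your decomposition is the \emph{leading} term, not a remainder, and what must be shown is that the off-diagonal sum is $o(\E N)$. In the paper it is bounded by $\ell^{-1/8}\,\E\Lambda_{n,z}$, not by anything of order $(\E N)^2$.

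\textbf{The barrier at $\beta>z$ loses the key cancellation.} The paper runs the second moment on $\Lambda_{n,z}$, whose barrier is \emph{linear} and at the \emph{same height} $z$ as the overshoot threshold. This forces the trajectory at the split time $j_s$ to lie below $z$, so that each post-split subtree must still climb a positive amount $i\ge 0$ before a depth-$\ell$ descendant can exceed $m_n+z$; the subtree factor then decays like $\mathrm{e}^{-\bar\theta i}$. With your barrier at $\beta>z$, the split height can sit in the window $(z,\psi_{n,\beta}(j_s)]$, where the subtree overshoot probability is $O(1)$; your uniform bound $\P(B_v^z\cap B_w^z\mid\text{split height})\lesssim(z+\log s)^2\mathrm{e}^{-2\bar\theta z}$ is then simply false. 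Relatedly, you discard the barrier constraint $A^\beta$ on the two post-split segments $[j_s,n-\ell]$; the paper retains it (this is the role of $\Gamma_{v,i,z,s}$ in~\eqref{eq-Gamma-y-z-s}), and that retained constraint is what makes the sum over $s$ converge with the right rate.

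The paper's resolution is to carry \emph{two} truncations rather than one: the linear-barrier count $\Lambda_{n,z}$ for the lower bound via Paley--Zygmund, and the curved-barrier count $\Gamma_{n,z}$ (barrier $z+\tfrac12\log\ell+(4/\bar\theta)\log(\cdot)$) for the upper bound; Lemma~\ref{lem-Gamma-Lambda} then shows $\E\Lambda_{n,z}\sim\E\Gamma_{n,z}$. There is no separate parameter $\beta$. Finally, in your first-moment evaluation the tilt produces $\mathrm{e}^{-\bar\theta y}$, not $\mathrm{e}^{+\bar\theta y}$, and the assertion $q_\ell(z-u)\asymp u\,\mathrm{e}^{-\bar\theta u}$ is off by a factor $\ell^{-3/2}$: since $\ell m_n/n\to c_1\ell=m_\ell+c_2\log\ell$, one has $q_\ell(z-u)\asymp(u+\log\ell)\,\ell^{-3/2}\mathrm{e}^{-\bar\theta u}$, and it is the matching of this $\ell^{-3/2}$ against the ballot factor in the trunk that produces a finite positive $\alpha^*$.
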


The section consists of two parts.  In Subsection 
\ref{firstsubsection}, our main
result is Proposition \ref{prop-gff-first-moment-dictates}, which 
compares $\P(\eta_n^* > m_n+z)$ with $E\Lambda_{n,z}$, which
is defined in (\ref{eq-big-definition}).  The main result in Subsection 
\ref{thesecondsubsection}, 
Proposition \ref{prop-asymptotic-first-moment}, shows that
$E\Lambda_{n,z} \sim \alpha^*ze^{-\bar{\theta}z}$ for large $z$.

\subsection{Expectation bounds for $\P(\eta_n^*> m_n+z)$}
\label{firstsubsection}
In order to prove Proposition~\ref{prop-limiting-tail-gff},
we will study the BRW at intermediate times $n-\ell\in (0,n)$,
where $\ell = \ell (z)$ is an integer function of $z$, not depending on $n$, and which satisfies
\begin{equation}\label{eq-L-tilde-L}
 \ell (z) \le z , \quad \ell (z)\rightarrow_{z\rightarrow\infty}\infty\,.
\end{equation}
When taking multiple limits, we will
let $n\rightarrow\infty$
before $z\rightarrow\infty$.  
(The restriction $\ell(z)\le z$ is employed, for example, 
in (\ref{eq3.48nnn}), but is needed because of
the term $\log n$ in $g_{n,\delta}(\cdot)$.)

For $v^{\prime}\in V_{n-\ell}$, define
\begin{equation}\label{eq-big-definition}
\begin{split}
E_{v^{\prime}, n}(z) &= \{\eta_{v^{\prime},n-\ell}(j) \leq jm_n/n + z  \mbox{ for all } 0\leq j\leq n-\ell, \mbox{ and } \max_{v\in V^{v^{\prime}}_{\ell}} \eta_{v,n} > m_n + z\}\,,\\
F_{v^{\prime}, n}(z) &= \{\eta_{v^{\prime},n-\ell}(j) \leq  jm_n/n +z  
+ \frac{1}{2}\log \ell + \frac{4}{\bar{\theta}} (\log [j \wedge (n-\ell-j)])_+ \\
&\qquad\mbox{ for all } 0\leq j\leq n-\ell, \mbox{ and } \max_{v\in V^{v^{\prime}}_{\ell}} \eta_{v,n} > m_n + z\}\,,\\
G_{n}(z) &= \bigcup_{v^{\prime}\in V_{n-\ell}}
\bigcup_{0\leq j\leq n-\ell}\{\eta_{v^{\prime},n-\ell}(j) >
jm_n/n +z+\frac{1}{2}\log \ell  + \frac{4}{\bar{\theta}} (\log [j \wedge (n-\ell -j)])_+ \}\,.\\
\end{split}
\end{equation}
Also define
$$\Lambda_{n, z} = \sum_{v^{\prime}\in V_{n-\ell}} \one_{E_{v^{\prime}, n}(z) }\,, \quad   \Gamma_{n, z} = \sum_{v^{\prime}\in V_{n-\ell}}
\one_{F_{v^{\prime}, n}(z) }\,.$$
In words, the random variable $\Lambda_{n,z}$ counts the number
of ($n-\ell$)th generation individuals $v^{\prime}$ for which
(i) over $j\in [0,n-\ell]$, $\eta_{v^{\prime},n}(\cdot)$,
stays below the line connecting $(0,z)$ to $(n,m_n + z)$ and (ii) at least
one of its descendents at time $n$ has position greater than
$m_n+z$.  The random variable $\Gamma_{n,z}$  
counts the number of individuals $v'$ whose ancestors
are instead constrained to stay below a higher, slightly concave curve.
(Here and later on, we will often suppress $\ell$ from the notation.)

The main result of this subsection is the following proposition.
\begin{prop}\label{prop-gff-first-moment-dictates}
For $\Lambda_{n,z}$ defined as above,
\begin{equation}
\label{eq-prop4.7}
\lim_{z\to \infty} \limsup_{n\to \infty} \frac{\P(\eta_n^* > m_n +z )}{\E \Lambda_{n, z}}  =  \lim_{z\to \infty} \liminf_{n\to \infty} \frac{\P(\eta_n^* > m_n +z )}{\E \Lambda_{n, z }} = 1\,.
\end{equation}
\end{prop}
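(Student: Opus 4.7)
The plan is to sandwich $\P(\eta_n^* > m_n+z)$ between two bounds asymptotic to $\E\Lambda_{n,z}$, with error terms vanishing as $z\to\infty$ uniformly in $n$, so that the ratio in \eqref{eq-prop4.7} is pinched to $1$.

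\textbf{Upper bound.} If $\eta_n^*>m_n+z$ is achieved by $v\in V_n$, let $v'\in V_{n-\ell}$ be its ancestor. Either the trajectory of $v'$ stays below the concave curve defining $F_{v',n}(z)$, so that $\Gamma_{n,z}\ge 1$, or the event $G_n(z)$ occurs. Hence $\P(\eta_n^* > m_n+z) \le \E\Gamma_{n,z} + \P(G_n(z))$. Applying the obvious variant of Lemma \ref{lem-a-priori} to the BRW truncated at time $n-\ell$ with $\beta\approx z+\tfrac{1}{2}\log\ell$ yields $\P(G_n(z))\lesssim \ell^{-\bar\theta/2}(z+\log\ell)\mathrm{e}^{-\bar\theta z}g_{n,\delta}(z) = o(z\mathrm{e}^{-\bar\theta z})$, because $\ell(z)\to\infty$. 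To show $\E\Gamma_{n,z}=(1+o(1))\E\Lambda_{n,z}$, I would fix $v'\in V_{n-\ell}$, condition on its terminal value $\eta_{v',n-\ell}(n-\ell)$, and factor out by independence the subtree event $\{\max_{v\in V_\ell^{v'}}\eta_{v,n}>m_n+z\}$; after the exponential tilt of Lemma \ref{lemmaABR}, the ratio of the curve-avoiding probability to the line-avoiding probability of the tilted walk lies in $[1,1+\delta_{\bar y}]$ with $\bar y\asymp z\to\infty$, by \eqref{eq-S-y-gamma} of Lemma \ref{prop-1DRW}.

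\textbf{Lower bound.} The elementary inequality $\P(\Lambda_{n,z}\ge 1)\ge \E\Lambda_{n,z} - \E[\Lambda_{n,z}(\Lambda_{n,z}-1)]$ reduces the problem to showing
\begin{equation*}
\E[\Lambda_{n,z}(\Lambda_{n,z}-1)]=o(\E\Lambda_{n,z}) \quad\text{as } z\to\infty, \text{ uniformly in } n.
\end{equation*}
I would expand the factorial moment over ordered pairs $(v',w')\in V_{n-\ell}^2$ according to their common-ancestor time $t\in\{0,\ldots,n-\ell-1\}$, producing a sum of the form $\sum_{t}\rho^t\,\E[\mathbf{1}_{A_t}(Y_t)\,q_t(Y_t)^2]$, where $A_t$ is the event that the shared path stays under the line up to time $t$ with terminal value $Y_t$, and $q_t(x)$ is the probability that an independent subtree of depth $n-t$ rooted at height $x$ has a descendant above $m_n+z$ whose ancestor stays below the line during $[t,n-\ell]$. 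Combining ballot-theorem bounds for $q_t$ (via Lemma \ref{prop-1DRW}) with the crossing bound of Lemma \ref{lem-a-priori} for the shared path, the sum should be dominated by contributions with $t$ near $n-\ell$, and the forced separation between $t$ and the typical split time (within $O(1)$ of $n$) of two near-maximal particles provides the required $o(1)$ factor once $\ell(z)\to\infty$.

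\textbf{Main obstacle.} The heart of the argument is the refined second-moment bound. The standard second moment in the spirit of Lemma \ref{lem2.7} yields only $\E[\Lambda_{n,z}^2]=O(\E\Lambda_{n,z})$, which drags the lower bound down by a constant factor and is too weak to force the ratio to $1$. The missing $o(1)$ must be extracted from the fact that $E_{v',n}(z)\cap E_{w',n}(z)$ demands that two independent subtrees of depth $\ell$ each produce a descendant exceeding $m_n+z$, a doubly rare event whose joint cost grows with $\ell=\ell(z)$. This refined estimate is the content of Proposition \ref{lem-second-moment}, and I expect it to be proved via the leading-particle truncation announced in the introduction, replacing each subtree's counting variable by the indicator that it contains a single distinguished top particle and then tracking that trajectory via the ballot estimates of Lemma \ref{prop-1DRW}.
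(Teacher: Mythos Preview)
Your proposal is correct and follows the paper's route: upper bound via $\P(\eta_n^*>m_n+z)\le\E\Gamma_{n,z}+\P(G_n(z))$ combined with Lemma~\ref{lem-Gamma-Lambda}, and lower bound via a second-moment inequality reducing to Lemma~\ref{lem-second-moment} (the paper uses Paley--Zygmund rather than Bonferroni, but both are equivalent once $\E\Lambda_{n,z}^2/\E\Lambda_{n,z}\to 1$). One imprecision: in your comparison of $\E\Gamma_{n,z}$ with $\E\Lambda_{n,z}$, the claim $\bar y\asymp z$ in the application of \eqref{eq-S-y-gamma} is not quite right, since $\bar y=y\wedge(y+a)$ also depends on the terminal value of the walk and can be small when the walk ends near the barrier; the paper's proof of Lemma~\ref{lem-Gamma-Lambda} resolves this by splitting on the terminal value into three regions and only invoking \eqref{eq-S-y-gamma} on the middle region (where $\bar y\gtrsim\ell^{1/3}$), bounding the outer regions directly. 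Finally, the ``leading-particle truncation'' you mention is already built into the definition of $\Lambda_{n,z}$ itself; the proof of Lemma~\ref{lem-second-moment} is precisely the split-time decomposition you sketch, with the key gain coming from the factor $\gamma_{h,\ell}=\P(\eta_\ell^*>\ell m_n/n+h)$ appearing squared and carrying the extra $\ell^{-3/2}$ decay from Corollary~\ref{cor-new}.
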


In order to demonstrate Proposition \ref{prop-gff-first-moment-dictates},
we separately derive 
lower and upper bounds on 
truncations of the BRW at time $n-\ell$ in terms of the
curves in the
definitions of $\Lambda_{n,z}$ and $\Gamma_{ n,z}$.  
The following two requirements motivate our choices of $\Lambda_{n,z}$ and $\Gamma_{ n,z}$:
\begin{itemize}
  \item[(1)] The truncations corresponding to $\Lambda_{n,z}$ and 
$\Gamma_{n,z}$ should result asymptotically in the
same expectation; this will be shown
in Lemma~\ref{lem-Gamma-Lambda}.
\item[(2)]
  After truncation with respect to the curve corresponding to
$\Lambda_{n,z}$, the resulting second moment of the number of curves
  should be asymptotically the same as the corresponding expectation; this
  will be shown in Lemma~\ref{lem-second-moment}. 
  \end{itemize}

We first compare $\E\Lambda_{n, z}$ and
$\E\Gamma_{n, z}$.  Note that $\E\Lambda_{n, z} \le \E\Gamma_{n, z}$.
\begin{lemma}\label{lem-Gamma-Lambda}
For $\Lambda_{n, z}$ and $\Gamma_{n, z}$ as above,
\begin{equation}
  \label{eq-clear240113}
\lim_{z\to \infty} \liminf_{n\to \infty}
\frac{\E \Lambda_{n, z}}{\E \Gamma_{n, z}}
= 1\,.
\end{equation}
\end{lemma}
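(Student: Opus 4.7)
Since the boundary in the definition of $F_{v',n}(z)$ dominates the line in $E_{v',n}(z)$ for every $j \in [0, n-\ell]$, we have $E_{v',n}(z) \subseteq F_{v',n}(z)$ and hence $\E\Lambda_{n,z} \le \E\Gamma_{n,z}$; so it suffices to show $\E[\Gamma_{n,z} - \Lambda_{n,z}] = o(\E\Gamma_{n,z})$ as $n\to\infty$ and then $z\to\infty$. By exchangeability among the $(n-\ell)$th generation particles,
\[
\E[\Gamma_{n,z} - \Lambda_{n,z}] = \rho^{n-\ell}\,\P(F_{v',n}(z) \setminus E_{v',n}(z))
\]
for any fixed $v' \in V_{n-\ell}$. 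The difference event requires the ancestral path $\eta_{v',n-\ell}(\cdot)$ to enter the ``strip'' between the low line and the higher curve (of logarithmic width in $\ell$ and in $j \wedge (n-\ell-j)$), remain below the higher curve throughout, and produce a descendant in $V^{v'}_\ell$ above $m_n + z$.

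I would decompose this event by the first time $T$ that the ancestral path exceeds $j m_n/n + z$, and by the overshoot $a := \bar\eta_{v',n-\ell}(T) - z \in (0, \tfrac{1}{2}\log\ell + \tfrac{4}{\bar{\theta}}(\log[T\wedge(n-\ell-T)])_+]$. The strong Markov property at $T$, together with the independence of the ancestral line and the $\ell$-subtree rooted at $v'$, factors the contribution into three pieces: (a) the first-passage probability $\P(T=j,\, \bar\eta_{v',n-\ell}(j)-z \in (a,a+1))$; (b) the probability that the walk continues from height $z+a$ at time $j$ to time $n-\ell$, staying below the higher curve, and lands near a given position $y$; (c) the subtree tail $\P(\eta^*_\ell > m_n + z - y)$. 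I would bound (a) by a strong Markov decomposition at time $T-1$ combined with the ballot estimate (\ref{equationABR1}) of Lemma \ref{lemmaABR} and the exponential tail of the increment $w(\cdot)$; (b) by the change of measure (\ref{eq-change-of-measure-1}) combined with the perturbation bound (\ref{eq-S-y-gamma}) or (\ref{eq-S-y-new}) of Lemma \ref{prop-1DRW} applied on the reversed interval; and (c) by Corollary \ref{cor-new}.

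The key cancellation is that the factor $e^{\bar{\theta} a}$ gained in (c) from the subtree starting $a$ higher is exactly matched by the $e^{-\bar{\theta} a}$ cost paid in (a)--(b) combined for the path to reach that higher value. What remains are polynomial ballot factors of order $z(z + a + \log[j\wedge(n-\ell-j)])$ together with the decay $(j\wedge(n-\ell-j))^{-3/2}\ell^{-3/2}$. Integrating over $a$ up to its maximal logarithmic width and summing over $j$ yields a bound of the form
\[
\E[\Gamma_{n,z} - \Lambda_{n,z}] \lesssim \frac{(\log\ell)^{C}}{\sqrt{\ell}}\, z e^{-\bar{\theta} z}
\]
for some $C>0$. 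Combined with the lower bound $\E\Gamma_{n,z} \ge \E\Lambda_{n,z} \gtrsim z e^{-\bar{\theta} z}$, obtained by a first-moment computation analogous to (\ref{eq2.9}) in the proof of Lemma \ref{lem2.7}, this gives the claimed asymptotic equality once $z$ (and hence $\ell(z)$) tends to infinity.

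The main obstacle is the bookkeeping in step (a): the ballot estimates in Lemma \ref{prop-1DRW} apply to constrained walks that stay above a threshold rather than cross it, so a strong Markov decomposition at time $T-1$ is needed to recover a ballot-compatible configuration, with the actual crossing step handled by the one-step exponential tail. A secondary subtlety is the middle range $j \sim n/2$ where the strip width grows logarithmically in $n$; the $(j\wedge(n-\ell-j))^{-3/2}$ decay from the ballot bound nevertheless dominates the polynomial log corrections, so these contributions remain negligible.
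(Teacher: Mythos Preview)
Your first-passage decomposition is a natural idea, but it does not deliver the bound you claim, and the gap is structural rather than cosmetic. After the strong Markov stop at $T=j$ with overshoot $a$, the path in your step (b) starts at height $z+a$, which is \emph{above} the linear barrier $z$. Consequently the only barrier you can use in (b) is the curved one, and the ballot factor you get is of order
\[
\frac{(\psi(j)-a+1)\,(i'+\tfrac12\log\ell+1)}{(n-\ell-j)^{3/2}},
\]
where $\psi(j)=\tfrac12\log\ell+\tfrac{4}{\bar\theta}(\log[j\wedge(n-\ell-j)])_+$ and $i'$ is the distance of the terminal point below $z$. Integrating over $a$ (which has exponential tail) leaves a factor $\psi(j)$, and summing over $j$ then produces an extra $\log\ell$. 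Once you also sum over the terminal position $i'$ using the subtree bound from Corollary~\ref{cor-new}, the resulting upper bound on $\E[\Gamma_{n,z}-\Lambda_{n,z}]$ is of order $(\log\ell)^{C}\,z e^{-\bar\theta z}$, \emph{not} $(\log\ell)^{C}\ell^{-1/2}\,z e^{-\bar\theta z}$. Since $\E\Lambda_{n,z}\asymp z e^{-\bar\theta z}$, your ratio does not tend to zero. There is no $\ell^{-1/2}$ anywhere in this computation; the claimed factor appears to come from conflating the subtree depth $\ell$ with a distance scale that is not actually present in your three pieces (a)--(c).

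The paper's proof avoids this over-counting by decomposing instead on the \emph{terminal position} $\bar\eta_{v',n-\ell}(n-\ell)$ and splitting into three ranges $A_1,A_2,A_3$ for the index $i$. On the dominant middle range $A_2$ (where both $z$ and the terminal gap exceed $\ell^{1/3}$), it invokes the sharp ratio comparison \eqref{eq-S-y-gamma} (see the remark after Lemma~\ref{prop-1DRW}) to get $\varphi^U_{n,\ell,z}(i)-\varphi^L_{n,\ell,z}(i)\le \delta_{\ell^{1/3}/2}\,\varphi^L_{n,\ell,z}(i)$ directly, yielding a contribution $\delta_{\ell^{1/3}/2}\,\E\Lambda_{n,z}$. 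The extreme ranges $A_1$ and $A_3$ are handled by crude bounds on $\varphi^U$ alone, and those are where a genuine $\ell^{-1/2}$ (respectively $e^{-\delta z}$) saving appears. The essential input you are missing is precisely \eqref{eq-S-y-gamma}: a first-passage decomposition cannot substitute for it, because once the path has crossed $z$ you have lost the information that both endpoints were far from the barrier, which is exactly what makes the curved and straight barrier probabilities nearly equal.
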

\begin{proof}
For $v'\in V_{n-\ell}$,
we write $\hat \eta_{v',n-\ell}(k) = \eta_{v',n-\ell}(k) - km_n/n$,
and define the probability measures $\mathbb{Q}^{(n)}$, on paths in $[0,n-\ell]$, by
\begin{equation}\label{eq-change-of-measure2}
\frac{d\mathbb{P}}{d\mathbb{Q}^{(n)}} = \mathrm{e}^{-\theta_n\hat \eta_{v',n}(n-\ell) - (n-\ell)I(m_n/n)}\,,
\end{equation}
where $\theta_n$ is defined below (\ref{eq-change-of-measure-1}).
Under $\mathbb{Q}^{(n)}$, $\hat \eta_{v',n-\ell}(\cdot)$ is a random walk
with 
increments whose law depends on $n$ but possesses a variance that is 
uniformly bounded in $n$.

For $\psi_{n,\ell,z}(k) = z + \frac{1}{2}\log \ell +
\frac{4}{\bar{\theta}}(\log[k\wedge (n-\ell -k)])_+$
and $J_i =z + \frac{1}{2}\log \ell + (-i-1,-i]$, set
$$\varphi^U_{n,\ell,z}(i) =\P(\hat{\eta}_{v',n-\ell}(k)\le \psi_{n,z}(k)
\text{ for all } k\le n-\ell, \,\hat{\eta}_{v',n-\ell}(n-\ell)\in J_i)\,,  $$
$$ \varphi^{L}_{n,\ell,z}(i) =\P(\hat{\eta}_{v',n-\ell}(k)\le z
\text{ for all } k\le n-\ell, \,\hat{\eta}_{v',n-\ell}(n-\ell)\in J_i)\,. $$
One then has the upper bound
\begin{align*}
\P(F_{v', n} (z)\setminus E_{v', n} (z))  
\le& \sum_{i\in A_1\cup A_3}
\varphi^U_{n,\ell,z}(i)
\P(\eta^*_{\ell} > \ell m_n/n + i - \frac{1}{2}\log \ell )  
\\
&+ \sum_{i\in A_2} (\varphi^U_{n,\ell,z}(i) - \varphi^L_{n,\ell,z}(i))
\P(\eta^*_{\ell} > \ell m_n/n + i - \frac{1}{2}\log \ell)\,,
\end{align*}
where $A_1$, $A_2$, and  $A_3$ are the integers restricted to
$[0,\,\ell^{1/3}+\frac{1}{2}\log \ell]$, 
$(\ell^{1/3}+\frac{1}{2}\log \ell\,,\ell +z]$, and $(\ell + z,\infty)$, respectively.  
One can bound the first sum over $A_1$, 
respectively, over $A_3$, by using the analog of (\ref{eq-nu}), respectively,
(\ref{eq23here}), to bound $\varphi^U_{n,\ell,z}(i)$, and by using
Corollary \ref{cor-new} to bound the second term, from which one 
obtains the upper bounds of the sums over $A_1$ and $A_3$,
$$C\rho^{-(n-\ell)} \ell^{-1/2} z\mathrm{e}^{-\bar{\theta} z} \quad \text{and} \quad
C\rho^{-(n-\ell)} z^{3} \mathrm{e}^{-(\bar{\theta} +\delta) z}\,, $$
for an appropriate constant $C$ and large enough $n$, 
where $\delta > 0$ is as in 
Corollary \ref{cor-new}.  It moreover 
follows from the comment after 
Lemma \ref{prop-1DRW} that the sum over $A_2$ is at most
$\delta_{\frac{1}{2}\ell^{1/3}} \P(E_{v', n}(z))$, where 
$\delta_y \rightarrow_{y\to \infty} 0$ and $\delta_y$ is as in Lemma \ref{prop-1DRW}.
Summation over $v'\in V_{n-\ell}$ therefore implies that
\begin{equation}\label{eq-difference-Gamma-Lambda}
\E \Gamma_{n, z}- \E \Lambda_{n, z} \lesssim  \ell^{-1/2}z \mathrm{e}^{-\bar{\theta}z} + \delta_{\frac{1}{2}\ell^{1/3}} \E \Lambda_{n, z}\,.
\end{equation}

By Lemmas \ref{lem-a-priori} and \ref{lem2.7} (applied with
$\beta=z+\frac{1}{2}\log \ell$), for all $\ell,n\ge 2$ and $z\le \sqrt{n}$,
\begin{equation}\label{eq-lower-Gamma}
\E \Gamma_{n, z} \gtrsim z\mathrm{e}^{-\bar{\theta} z}\,.
\end{equation} Together,
 \eqref{eq-difference-Gamma-Lambda}  and (\ref{eq-lower-Gamma}) imply (\ref{eq-clear240113}),
which completes the proof of the lemma.
\end{proof}

We next provide a precise estimate of  the second moment 
of $\Lambda_{n, z}$ in terms of its first moment.
\begin{lemma}\label{lem-second-moment}
For $\Lambda_{n,z}$ as above,
\begin{equation}
\label{newequation72}
\lim_{z\to \infty} \limsup_{n \to \infty} \frac{\E (\Lambda_{n, z})^2}{\E \Lambda_{n, z}} = 1\,.
\end{equation}
\end{lemma}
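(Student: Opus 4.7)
Since $\Lambda_{n,z}$ is an integer-valued indicator sum, $\E(\Lambda_{n,z})^{2}\ge\E\Lambda_{n,z}$, giving the $\liminf\ge 1$ direction of \eqref{newequation72} automatically; the content of the lemma is the matching $\limsup$ upper bound. Mimicking the strategy of Lemma \ref{lem2.7}, I would decompose the second moment by the generation $j_{s}:=(n-\ell)-s$ of the last common ancestor of pairs $v\ne w\in V_{n-\ell}$:
\[
\E(\Lambda_{n,z})^{2} - \E\Lambda_{n,z} \;=\; K^{*}\rho^{n-\ell-2}\sum_{s=1}^{n-\ell}\rho^{s}\,\P\bigl(E_{v,n}(z)\cap E_{w,n}(z)\text{ for }v\sim_{s}w\bigr),
\]
where $K^{*}=K-\rho$. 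Combining \eqref{eq-difference-Gamma-Lambda} with \eqref{eq-lower-Gamma} yields $\E\Lambda_{n,z}\gtrsim z e^{-\bar{\theta}z}$, so it suffices to show that the pairs sum is $o\bigl(z e^{-\bar{\theta}z}\bigr)$ as first $n\to\infty$ and then $z\to\infty$.

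For each $s$, I would condition on the value $y=\hat{\eta}_{v,n-\ell}(j_{s})$ at the common ancestor; by conditional independence after the split, the joint probability factors as (shared-path probability)\,$\times$\,(single-continuation probability)$^{2}$. Applying the tilted measure $\mathbb{Q}^{(n)}$ of \eqref{eq-change-of-measure2} together with the ballot-type bound \eqref{eq-S-y-new} of Lemma \ref{prop-1DRW} gives, after reverting to $\mathbb{P}$, a shared-path factor $\lesssim \rho^{-j_{s}}e^{-\bar{\theta}y}\,z(z-y+1)/j_{s}^{3/2}$. Each continuation is a walk of length $s$ that must stay below the linear barrier, terminate at some value $z-u'$, and then produce a depth-$\ell$ descendent exceeding $m_{n}+z$; by the same change-of-measure machinery together with the offspring tail $\P(\eta_{\ell}^{*}>\ell m_{n}/n+u')\lesssim \ell^{-3/2}(u'+\log\ell)e^{-\bar{\theta}u'}$ from Corollary \ref{cor-new}, each continuation is bounded by $\rho^{-s}$ times a factor whose sum over $u'$ converges thanks to the moderate-deviation damping $g_{\ell,\delta}$ and the walk analog of \eqref{eq23here}.

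Assembling these bounds, the $\rho$-prefactors cancel as $\rho^{n-\ell-2}\rho^{s}\rho^{-j_{s}}\rho^{-2s}=\rho^{-2}$, and the exponential weights combine to $e^{-\bar{\theta}y}\cdot e^{-2\bar{\theta}(z-y)}=e^{-\bar{\theta}(2z-y)}$. Substituting $u=z-y\ge 0$, this is $e^{-\bar{\theta}z}\cdot e^{-\bar{\theta}u}$, so the polynomial factors in $u$ coming from the ballot bounds are tamed by $e^{-\bar{\theta}u}$ and the $y$-integration produces a finite constant. Summation over $s$ (using $\sum_{s}1/(j_{s}^{3/2}s^{3})\lesssim 1$) gives an overall pairs sum bounded by $z e^{-\bar{\theta}z}/\ell^{\alpha}$ for some $\alpha>0$, which is $o\bigl(z e^{-\bar{\theta}z}\bigr)$ since $\ell=\ell(z)\to\infty$ with $z$.

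The main obstacle is to isolate the source of genuine $z$-decay in the ratio. Unlike Lemma \ref{lem2.7}, where only a bounded ratio was required, here one must extract an extra small factor from the pairs contribution relative to the first-moment $\E\Lambda_{n,z}$. This factor arises because the \emph{two} independent depth-$\ell$ offspring subtrees each contribute their own $\ell^{-3/2}$ tail via Corollary \ref{cor-new}, while the first moment involves only one such factor (which cancels against the single sum over endpoint heights); careful matching of the continuation-endpoint summation cutoff (imposed by the moderate-deviation damping in $g_{\ell,\delta}$) against the offspring-tail prefactor is needed. A secondary technical point is uniformity in $n$: the ballot bounds of Lemma \ref{prop-1DRW} take distinct forms near the endpoints of the splitting range (cf.\ \eqref{eq-nu} versus \eqref{eqnulowj}), so the sum over $s$ must be split into small, intermediate, and large regimes.
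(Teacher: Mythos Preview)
Your plan mirrors the paper's: decompose by the split time $j_s=n-\ell-s$, condition on the ancestor's height, tilt via $\mathbb{Q}^{(n)}$ and apply ballot bounds to the shared path and to each continuation, then split $s$ into regimes. The exponential algebra $e^{-\bar\theta y}\cdot e^{-2\bar\theta(z-y)}=e^{-\bar\theta z}e^{-\bar\theta u}$ is right. Two points, however, need correction before the argument closes.

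First, a bookkeeping slip: the change of measure returns $e^{-jI(m_n/n)}$, not $\rho^{-j}$; since $I(m_n/n)=\log\rho-\tfrac32(\log n)/n+o(1/n)$, assembling the three pieces (total length $j_s+2s=n-\ell+s$) leaves a residual factor $\approx n^{3/2}$ after your $\rho$-cancellation. Using $e^{(3/2)(s/n)\log n}\le s^{3/2}$, the correct summand in $s$ is $n^{3/2}/((sj_s)^{3/2})$, whose sum over the \emph{full} range $s\in[1,n-\ell]$ is only $O(1)$. Second---and this is the real gap---the two $\ell^{-3/2}$ factors from Corollary~\ref{cor-new} do \emph{not} supply the decay you claim: each is absorbed by its own endpoint sum, since $\ell^{-3/2}\sum_h (h{+}2)(h{+}\log\ell)\,g_{\ell,\delta}(h)\asymp(\log\ell)^{3/2}$ (the damping $g_{\ell,\delta}$ cuts off only at $h\asymp\sqrt{\ell\log\ell}$), so squaring gives $(\log\ell)^3$, a \emph{growing} factor. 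The actual $o(1)$ is regime-dependent, and is precisely why the split in $s$ is not a ``secondary technical point'' but the heart of the matter. For $s\ge\ell^{1/3}$ the decay comes from the $s$-sum itself, $\sum_{s\ge\ell^{1/3}} n^{3/2}/((sj_s)^{3/2})\lesssim\ell^{-1/6}$. For $s<\ell^{1/3}$ it comes from the damping $g_{s,\delta}(i-h)$ on the \emph{length-$s$ continuation walk} (not $g_{\ell,\delta}$ on the tree): this forces $|i-h|\lesssim\sqrt{s\log s}$ and prevents the endpoint sum from cancelling $\ell^{-3/2}$, yielding an extra $\ell^{-1/2}$ per pair while the short $s$-sum stays $O(1)$. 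For $s$ within $\ell^{1/3}$ of $n-\ell$ one additionally drops the factor $z$ in the shared-path bound. With these corrections the argument matches the paper's.
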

\begin{proof}
By
Lemma~\ref {lem-Gamma-Lambda}  and \eqref{eq-lower-Gamma}, 
\begin{equation}\label{eq-Lambda-first-moment}
\lim_{z\rightarrow\infty}\liminf_{n\rightarrow\infty} 
(\E \Lambda_{n, z} / z\mathrm{e}^{-\bar{\theta}z}) \gtrsim 1\,.
\end{equation}
We need to estimate the above second moment, which we rewrite as
\begin{equation}
\label{newequation73}
E(\Lambda_{n, z})^2 = 
E\Lambda_{n, z}  + 
\sum_{v,w\in V_{n-\ell},v\neq w} \P(E_{v, n}(z) \cap E_{w, n}(z))\,.
\end{equation}
In analogy with
the previous section, for $v,w\in V_{n-\ell}$, we will write $v\sim_s w$
when $v$ and $w$ split at time $j_s = n - \ell - s$.

As in (\ref{eq2.10}) of Lemma \ref{lem2.7}, 
we will show that, for large $z$, the sum in (\ref{newequation73}) 
is small in comparison with the first term on
the right hand side.
For $v\sim_s w$, with given $s$, we will employ the upper bound
\begin{align}\label{eq-P-A-v-w}
&\P(E_{v, n} (z) \cap E_{w, n}(z))\nonumber\\
= &\P(\bar \eta_{v,n}(j) , \bar \eta_{w,n}(j) \leq z \mbox{ for all }j\in [0,n-\ell]; \, \max_{u\in V^v_{\ell}} \bar{\eta}_{u,n},  
\max_{u\in V^w_{\ell}} \bar{\eta}_{u,n} > z)\nonumber\\
= &\sum_{i=0}^{\infty}\P(\bar \eta_{v,n}(j), \bar \eta_{w,n}(j) \leq z \mbox{ for all }j\in [0,n-\ell];\, \max_{u\in V^v_{\ell}} \bar{\eta}_{u,n} , \max_{u\in V^w_{\ell}} \bar{\eta}_{u,n} > z; \bar \eta_{v,n}(j_s) \in J_i)\nonumber\\
\leq &\sum_{i=0}^{\infty}\P(\bar \eta_{v,n}(j) \leq z  \mbox{ for all } j\in [0,j_s]; \,\bar \eta_{v,n}(j_s) \in J_i) (\Gamma_{v, i, z, s})^2 \,,
\end{align}
where, in the above sums, $J_i = z + (-i-1, -i]$, and 
$$\Gamma_{v, i, z, s} = \sup_{\bar \eta_{v,n}(j_s)\in J_i}
\P(\bar \eta_{v,n} (j)\leq z \mbox{ for all } j_s<j\leq n-\ell,   \max_{u\in V^v_{\ell}} \bar{\eta}_{u,n} > z
 \mid \bar \eta_{v,n}(j_s) ).$$

We decompose the range for $s$ into three regions, given by 
$[0,\lfloor \ell^{1/3}\rfloor)$, 
$[\lfloor \ell^{1/3}\rfloor, n-\ell - \lfloor \ell^{1/3}\rfloor)$ and 
$[n-\ell-\lfloor \ell^{1/3}\rfloor, n-\ell]$; 
the arguments for each part are similar,
with minor differences.   We first handle the main interval, consisting
of $s\in [\lfloor \ell^{1/3}\rfloor, n-\ell - \lfloor \ell^{1/3}\rfloor)$. 

Set $\gamma_{h,\ell} = \P(\eta^*_{\ell} >\ell m_n/n + h)$. 
Restarting the BRW at time $j_s$, then applying the change 
of measure (\ref{eq-change-of-measure2}), 
(\ref{equationABR1}) of the ballot theorem, and reasoning  
similarly to the upper bound for $\chi_{n,j}^{\P}(\cdot)$ in 
(\ref{eq-nu})--(\ref{eq-nugen}), one obtains
\begin{align}\label{eq-Gamma-y-z-s}
\Gamma_{v, i, z, s } \leq & \sum_{h=-1}^{\infty} \P(\bar \eta_{v,s+\ell}(j)\leq i+1 \mbox{ for all } j \in [0,s], \bar \eta_{v,s+\ell}(s) 
\in (i-h-1, i-h]) \gamma_{h,\ell}\nonumber \\
\lesssim & \sum_{h=-1}^{\infty} (h+2)(i+1)\mathrm{e}^{-\bar{\theta} (i-h)} g_{s,\delta}(i-h)
\mathrm{e}^{-sI(m_n/n)} \gamma_{h,\ell}/s^{3/2}
\end{align}
for appropriate $\delta >0$; similarly,
\begin{align}\label{eq-decompose-x}
\P(\bar \eta_{v,n}(j) \leq z \mbox{ for all } j\in [0,j_s]; \,\bar \eta_{v,n}(j_s) \in J_i) \lesssim \frac{z(i + 1)}{j_s^{3/2}}
\mathrm{e}^{-\bar{\theta}(z-i)}
\mathrm{e}^{-j_s I(m_n/n)}\,.
\end{align}
Substitution of
\eqref{eq-Gamma-y-z-s} and (\ref{eq-decompose-x}) into \eqref{eq-P-A-v-w} implies that
\begin{equation}
\label{eqother}
\P(E_{v, n} (z) \cap E_{w, n}(z)) 
\lesssim 
z \mathrm{e}^{-\bar{\theta}z}
[\mathrm{e}^{-(n-\ell +s)I(m_n/n)} /(s^3 j_s^{3/2}) ]
[\sum_{i=0}^{\infty} (i+1)^3\mathrm{e}^{-\bar{\theta} i}]
[\sum_{h=-1}^{\infty} (h+2)\mathrm{e}^{\bar{\theta} h } 
\gamma_{h,\ell}]^2 \,.
\end{equation}
(Here, we have used $g_{s,\delta}(i-h) \le 1$; the term $g_{s,\delta}(i-h)$ will be needed when applying
\eqref{eq-Gamma-y-z-s} for $s \in [1,\lfloor \ell^{1/3}\rfloor)$.)   
Using the definitions of $c_1$ and
$m_n$, one has the upper bound, for the quantity in the first brackets,  
\begin{equation}
\label{eqsbd}
\rho^{-(n-\ell +s)} \frac{n^{3/2}}{(sj_s)^{3/2}}
\frac{\exp\{(3/2)(s/n) \log n \}}{s^{3/2}} 
\lesssim \rho^{-(n-\ell +s)} \frac{n^{3/2}}{(sj_s)^{3/2}} \,,
\end{equation}
whereas, plugging in Corollary \ref{cor-new}, one has, for
$n\ge \ell^2$, the upper bound for 
the quantity in the last brackets,
\begin{equation}
\label{eqellbd}
\ell^{-3/2}\sum_{h=-1}^{\infty} (h+2)(h+\log \ell)g_{\ell,\delta}(h)
\lesssim (\log \ell)^{3/2} \,,
\end{equation}
since the sum is of order $\sqrt{\ell \log \ell}$ times the variance 
$\ell \log \ell$ of
the corresponding normal.  
Together with (\ref{eqother}), these two bounds imply that, for 
$s\in [\lfloor \ell^{1/3}\rfloor, n-\ell - \lfloor \ell^{1/3}\rfloor)$,
\begin{equation}
\label{eq-conclusion}
\P(E_{v, n} (z) \cap E_{w, n}(z)) \lesssim 
z \mathrm{e}^{-\bar{\theta}z}\rho^{-(n-\ell +s)}
 \frac{n^{3/2}(\log \ell)^{3}}{(sj_s)^{3/2}}\,.
\end{equation}

The argument for $s\in [n-\ell-\lfloor \ell^{1/3}\rfloor, n-\ell]$ 
is essentially the
same as the previous argument, but, 
instead of (\ref{eq-decompose-x}), which employs the ballot
theorem, we use the simpler upper bound 
\begin{equation}
\label{eq-decompose-b}
\P(\bar \eta_{v,n}(j_s) \in J_i) \lesssim \frac{1}{(j_s \vee 1)^{1/2}}
\mathrm{e}^{-\bar{\theta}(z-i)}
\mathrm{e}^{-j_s I(m_n/n)},
\end{equation}
which avoids the coefficient $z$ in the numerator.
Continuing as above, instead of (\ref{eq-conclusion}), one obtains
that, for 
$s\in [n-\ell - \lfloor \ell^{1/3}\rfloor,n-\ell]$,
\begin{equation}
\label{eq-conclusion2}
\P(E_{v, n} (z) \cap E_{w, n}(z)) \lesssim 
\mathrm{e}^{-\bar{\theta}z}\rho^{-(n-\ell +s)}
 \frac{n^{3/2}(\log \ell)^{3}}{s^{3/2}(j_s \vee 1)^{1/2}}\,.
\end{equation}

The argument for $s\in [0,\lfloor \ell^{1/3}\rfloor )$ is also similar to that
for $s\in [\lfloor \ell^{1/3}\rfloor, n-\ell - \lfloor \ell^{1/3}\rfloor)$, but 
one retains the term $g_{s,\delta}(i-h)$ in (\ref{eq-Gamma-y-z-s}), and therefore replaces the double sum
in (\ref{eqother}) by 
\begin{equation}
\label{eq4.51new}
\sum_{i=0}^{\infty} (i+1)^3\mathrm{e}^{-\bar{\theta} i}
[\sum_{h=-1}^{\infty} (h+2)\mathrm{e}^{\bar{\theta} h } g_{s,\delta}(i-h)
\gamma_{h,\ell}]^2 = 
\sum_{i=0}^{\infty} (i+1)^3\mathrm{e}^{-\bar{\theta} i/3}
[\sum_{h=-1}^{\infty} (h+2)\mathrm{e}^{\bar{\theta} h } \mathrm{e}^{-\bar{\theta}i/3} g_{s,\delta}(i-h)
\gamma_{h,\ell}]^2\,.
\end{equation}
Bounding $\gamma_{h,\ell}$ as before, this is
\begin{equation}
\label{eq4.51}
\lesssim \sum_{i=0}^{\infty} (i+1)^3\mathrm{e}^{-\bar{\theta} i/3}
[\sum_{h=-1}^{\infty} \ell^{-3/2} (h+2) (h + \log \ell)\mathrm{e}^{-\bar{\theta}i/3} g_{s,\delta}(i-h)]^2 \,.
\end{equation}
By completing the square, one can show that, for appropriate 
$\epsilon = \epsilon_{\bar{\theta},\delta} > 0$,
$$ \mathrm{e}^{-\bar{\theta}i/3} g_{s,\delta}(i-h) \lesssim g_{s,\epsilon}(h) $$ 
for all $s$, $h$, and $i$.   Employing (\ref{eqellbd}), 
but with $g_{s,\epsilon}(h)$ in place of $g_{\ell, \delta}(h)$, it follows that (\ref{eq4.51}), 
and hence (\ref{eq4.51new}),  is at most (up to a constant multiple)
\begin{equation*}
\{\ell^{-3/2}[(s\log s)^{3/2} +(s\log s) \log \ell]\}^2 \lesssim \ell^{-1/2}\,,
\end{equation*}
on accout of $s\le \ell^{1/3}$.
Plugging in this bound for the product of the last two bracketed 
quantities in (\ref{eqother}), and employing (\ref{eqsbd}) for the 
first bracketed quantity, this implies that
\begin{equation}
\label{eq3.55}
\P(E_{v, n} (z) \cap E_{w, n}(z)) \lesssim 
z\mathrm{e}^{-\bar{\theta}z} \rho^{-(n-\ell +s)} \frac{n^{3/2}\ell^{-1/2}}
{((s \vee 1)j_s)^{3/2}}\,. 
\end{equation}

Using $\E (\sum_{v,w \in V_{n-\ell},v\neq w} \one_{v\sim_s w}) =
 K^*\rho^{n-\ell +s -2}$, the bounds in (\ref{eq-conclusion}), 
(\ref{eq-conclusion2}), and (\ref{eq3.55}) together show that
the sum on the right hand side of 
(\ref{newequation73}) is at most (up to a constant multiple)
\begin{equation}
\label{eq3.48}
 z \mathrm{e}^{-\bar{\theta}z}
\{\sum_{s=0}^{\lfloor \ell^{1/3}\rfloor} \frac{n^{3/2}\ell^{-1/2}}
{((s \vee 1)j_s)^{3/2}} 
+ \sum_{s=\lfloor \ell^{1/3}\rfloor+1}^{n-\ell -\lfloor \ell^{1/3}\rfloor}
 \frac{n^{3/2} (\log \ell)^{3}}{(sj_s)^{3/2}} 
+\sum_{s=n-\ell -\lfloor \ell^{1/3}\rfloor+1}^{n-\ell} \frac{n^{3/2}(\log \ell)^{3}\ell^{1/3}}{(s(j_s \vee 1))^{3/2}z}\}\,. 
\end{equation}
Since $\sum_{s=k}^{\infty} 1/s^{3/2} \lesssim 1/k^{1/2}$, this is,
for given $z$ and large $n$,
\begin{equation}
\label{eq3.48nnn}
\lesssim z \mathrm{e}^{-\bar{\theta}z}\{\ell^{-1/2} + 
(\log \ell)^{3}\ell^{-1/6} + 
(\log \ell)^{3}z^{-2/3} \} \lesssim \ell^{-1/8}z\mathrm{e}^{-\bar{\theta}z}
\lesssim \ell^{-1/8}\, \E \Lambda_{n,z} \,,  
\end{equation}
where the first two inequalities use (\ref{eq-L-tilde-L}) 
and the third inequality uses (\ref{eq-Lambda-first-moment}).
The coefficient $\ell^{-1/8}$ of
$\E\Lambda_{n,z}$  goes to $0$ as $z\rightarrow\infty$.  
This shows that, for large $z$, the sum in (\ref{newequation73}) is small in comparison with the preceding term in (\ref{newequation73}), which
 completes the proof of the lemma.
\end{proof}

We now complete the demonstration of 
Proposition \ref{prop-gff-first-moment-dictates}.

\begin{proof}[Proof of Proposition \ref{prop-gff-first-moment-dictates},]
  By a simpler version of the argument in Lemma~\ref{lem-a-priori}, 
\begin{equation}
\label{eq3.50nn}
\P(G_n(z)) \lesssim \ell^{-\bar{\theta}/2}z\mathrm{e}^{-\bar{\theta}z}
\end{equation}
for $\ell \le \sqrt{n}$;
the factor $\frac{1}{2}\log \ell$ in the definition of $G_n(z)$ 
has been employed here.
(The analog of (\ref{eq-nu}) (rather than (\ref{eq-nugen})) suffices, for
which one employs the change of measure (\ref{eq-change-of-measure2})
(rather than (\ref{eq-change-of-measure-1})).)
Together, (\ref{eq3.50nn}), 
Lemma \ref{lem-Gamma-Lambda}, (\ref{eq-lower-Gamma}), and the trivial
estimate
$$\P(G_{n}(z))+\E\Gamma_{n,z}\geq
\P(\eta_{n}^* > m_n+z)$$
imply the upper bound 
\begin{equation}
\label{eq3.60nn}
\limsup_{z\to \infty} \limsup_{n\to \infty}
\frac{\P(\eta_{n}^* > m_n +z )}
{\E \Lambda_{n, z}}\le 1 \,.
\end{equation}
On the other hand, the lower bound
\begin{equation}
\label{eq3.61nn}
\liminf_{z\to \infty} \liminf_{n\to \infty} 
\frac{\P(\eta_{n}^*> m_n + z)}
{\E \Lambda_{n, z}}\geq 1
\end{equation}
is an immediate consequence of  
Lemma~\ref{lem-second-moment} and the inequalities
\begin{align*}
&\P(\eta_{n}^*> m_n+z)
\geq
\P(\bigcup_{v' \in V_{n-\ell}} 
E_{v',n}(z))
\geq \frac{(\E\Lambda_{n,z})^2}
{\E(\Lambda_{n,z})^2}
\end{align*}
(with the latter following from Jensen's inequality).  Together,  (\ref{eq3.60nn}) and (\ref{eq3.61nn}) imply (\ref{eq-prop4.7}).
\end{proof}

\subsection{Asymptotics for $\E \Lambda_{n,z}$}
\label{thesecondsubsection}
This subsection is devoted to demonstrating Proposition \ref{prop-asymptotic-first-moment},
which gives the asymptotic behavior of $E\Lambda_{n, z}$ for large $n$ and $z$.
\begin{prop}\label{prop-asymptotic-first-moment}
There exists a constant $\alpha^*>0$ such that
\begin{equation}
\label{eqmainresultinsecondsection}
\lim_{z\to \infty}\limsup_{n\to \infty} 
\frac{\E \Lambda_{n, z}}{\alpha^*
z \mathrm{e}^{-\bar{\theta}z}} = \lim_{z\to \infty}\liminf_{n\to \infty} 
\frac{\E \Lambda_{n, z}}{\alpha^* z \mathrm{e}^{-\bar{\theta}z}} = 1\,.
\end{equation}
\end{prop}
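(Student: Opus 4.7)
My plan is to apply the change of measure \eqref{eq-change-of-measure2}, decompose the resulting expectation according to the endpoint of the centered walk at time $n-\ell$, invoke Lemma~\ref{prop-1DRW} to extract the $n\to\infty$ asymptotic bucket by bucket, and finally send $z,\ell\to\infty$ using \eqref{eq-S-z} together with the preliminary tail estimates of Section~\ref{sec-prelim}. Since the subtrees $\{V^{v'}_{\ell}:v'\in V_{n-\ell}\}$ are i.i.d.\ copies of a BRW of depth $\ell$, for any fixed $v_0\in V_{n-\ell}$,
\[
\E\Lambda_{n,z}=\rho^{n-\ell}\E\!\left[\one_{\{\hat\eta_{v_0,n-\ell}(k)\le z\,\forall\,k\le n-\ell\}}\,f_\ell\!\left(z-\hat\eta_{v_0,n-\ell}(n-\ell)\right)\right],
\]
where $f_\ell(a):=\P(\eta_\ell^*>\ell m_n/n+a)$. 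Applying \eqref{eq-change-of-measure2}, the identity $\rho^{n-\ell}\mathrm{e}^{-(n-\ell)I(m_n/n)}=n^{3/2}(1+o_n(1))$ (which uses $\bar\theta c_2=3/2$, as in Lemma~\ref{lem-a-priori}), and bucketing $w:=z-\hat\eta_{v_0,n-\ell}(n-\ell)$ into intervals $(a,a+1]$ for $a\in\mathbb{Z}_{\ge 0}$, one rewrites
\[
\E\Lambda_{n,z}\sim n^{3/2}\mathrm{e}^{-\bar\theta z}\sum_{a\ge 0}\mathrm{e}^{\bar\theta a}f_\ell(a)\,q_{n,\ell}(z,a),
\]
with $q_{n,\ell}(z,a):=\P^{\mathbb{Q}^{(n)}}\!\left(\hat\eta_{v_0,n-\ell}(k)\le z\,\forall k,\; z-\hat\eta_{v_0,n-\ell}(n-\ell)\in(a,a+1]\right)$.

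Next, I would apply Lemma~\ref{prop-1DRW} to the reflected centered walk $M_k:=-\hat\eta_{v_0,n-\ell}(k)$ with barrier $y=z$: for each fixed $a$,
\[
(n-\ell)^{3/2}q_{n,\ell}(z,a)\;\longrightarrow\;\beta^{o}_{z,a-z,a-z+1}\quad\text{as }n\to\infty,
\]
while \eqref{eq-S-y-new} provides the uniform bound $(n-\ell)^{3/2}q_{n,\ell}(z,a)\lesssim z\,a$ needed to pass the sum through the limit (combined with the exponential tail of $f_\ell$ from Corollary~\ref{cor-new}). Hence
\[
\lim_{n\to\infty}\E\Lambda_{n,z}=\mathrm{e}^{-\bar\theta z}\sum_{a\ge 0}\mathrm{e}^{\bar\theta a}f_\ell(a)\,\beta^{o}_{z,a-z,a-z+1}.
\]

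For the $z\to\infty$ limit (with $\ell=\ell(z)\to\infty$), \eqref{eq-S-z} yields $\beta^{o}_{z,a-z,a-z+1}\sim\beta^*\,z\,a$ whenever $a,z\to\infty$. For bounded $a$ and $z\to\infty$, a separate harmonic/renewal analysis (implicit in the proof of Lemma~\ref{prop-1DRW}) gives $\beta^{o}_{z,a-z,a-z+1}\sim\beta^*\,z\,V(a)$, where $V$ is a positive harmonic function of the centered walk killed below $0$ satisfying $V(a)\sim a$ as $a\to\infty$. Substituting and using the matching estimates $\mathrm{e}^{\bar\theta a}f_\ell(a)\asymp(a+\log\ell)\ell^{-3/2}$ for $a\lesssim\sqrt\ell$ (from Corollary~\ref{cor-new} and Lemma~\ref{lem2.7}), one obtains
\[
\lim_{z\to\infty}\frac{\lim_{n\to\infty}\E\Lambda_{n,z}}{z\,\mathrm{e}^{-\bar\theta z}}=\beta^*\lim_{\ell\to\infty}\sum_{a\ge 0}V(a)\mathrm{e}^{\bar\theta a}f_\ell(a)=:\alpha^*,
\]
with $0<\alpha^*<\infty$, which is precisely \eqref{eqmainresultinsecondsection}.

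The main obstacle is twofold. First, the regime "bounded $a$, large $z$" is not directly covered by \eqref{eq-S-z} (which requires both $y$ and $y+a$ to tend to infinity), so the asymptotic $\beta^{o}_{z,a-z,a-z+1}\sim\beta^*\,z\,V(a)$ must be supplemented by an ad hoc harmonic-function argument for the centered walk, or alternatively one must show that the bounded-$a$ contribution is asymptotically negligible compared to the contribution from $a$ of order $\sqrt\ell$. Second, and more delicately, proving that $\sum_{a\ge 0}V(a)\mathrm{e}^{\bar\theta a}f_\ell(a)$ converges to a positive limit as $\ell\to\infty$ must be done without appealing to the convergence of the derivative martingale $Z$; a Cauchy-type comparison at depths $\ell<\ell'$ via the BRW branching relation between $f_\ell$ and $f_{\ell'}$, combined with the sharp second-moment control of Lemma~\ref{lem-second-moment}, is the natural route to establish convergence and thus the existence of $\alpha^*$.
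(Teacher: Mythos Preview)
Your overall approach coincides with the paper's: change of measure \eqref{eq-change-of-measure2}, bucketing in the endpoint $z-\hat\eta_{v_0,n-\ell}(n-\ell)$, then Lemma~\ref{prop-1DRW} for the $n\to\infty$ asymptotic and \eqref{eq-S-z} for the $z\to\infty$ step. You have also correctly identified the two obstacles. The differences lie in how those obstacles are resolved.

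For the first obstacle, the paper takes exactly your second alternative, and this is the key simplification you are missing. Lemma~\ref{lem-Lambda-J-k} (proved just before Proposition~\ref{prop-asymptotic-first-moment}) shows that the contribution from endpoints with $a\notin L_\ell:=(\ell^{2/5},\ell]$ is $o(\E\Lambda_{n,z})$; this uses only the crude bounds \eqref{eqwas54}--\eqref{eqaftereqwas54} coming from \eqref{equationABR1} and Corollary~\ref{cor-new}. After localizing to $a\in L_\ell$, both $z$ and $a$ are large, so \eqref{eq-S-z} applies directly and gives $\beta_{z,a-z,a-z+1}\sim\beta^* z a$ uniformly on that window. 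The harmonic-function route you sketch ($\beta^o_{z,a-z,a-z+1}\sim\beta^* z V(a)$ for bounded $a$) is thus never needed; pursuing it would require renewal-type input not developed in the paper.

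For the second obstacle, your instinct is right and the paper's treatment is terse. The Cauchy argument can be implemented directly through Proposition~\ref{prop-gff-first-moment-dictates}, which is already proved: since $\E\Lambda_{n,z}^{(\ell)}\sim\P(\eta_n^*>m_n+z)$ for \emph{every} choice $\ell=\ell(z)\to\infty$, and the right side is independent of $\ell$, comparing two large values $\ell_1,\ell_2$ forces $\int_{-L_{\ell_1}}ye^{\bar\theta y}\gamma_{\ell_1}(y)\,dy$ and $\int_{-L_{\ell_2}}ye^{\bar\theta y}\gamma_{\ell_2}(y)\,dy$ to agree up to a factor $1+o(1)$. This yields the existence of $\alpha^*=\beta^*\lim_{\ell\to\infty}\int_{-L_\ell}ye^{\bar\theta y}\gamma_\ell(y)\,dy$ without any direct branching computation relating $f_\ell$ to $f_{\ell'}$.
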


Together, Propositions~\ref{prop-gff-first-moment-dictates}
and \ref{prop-asymptotic-first-moment} imply
Proposition~\ref{prop-limiting-tail-gff}.

For $v\in V_{n-\ell}$, denote by $\nu_{n,z}(\cdot)$ the measure satisfying
$$\nu_{n,z}(I) = \P(\bar{\eta}_{v,n}(j) \leq z  \mbox{ for all } 0\leq j\leq n-\ell;\, \bar{\eta}_{v,n}(n-\ell)  \in z+ I)$$
for all intervals $I \in \mathbb{R}$.
Also, set
$\gamma_{\ell}(y) = \P(\eta^*_{\ell} >\ell m_n/n + y)$.
From the definition of $E_{v,n}(z)$ in 
(\ref{eq-big-definition}) and $J_i := (-i-1,-i]$, $i=0,1,\ldots$, one has
\begin{equation}
\label{equpperbd}
\P(E_{v, n}(z); \, \bar{\eta}_{v,n}(n-\ell) \in z+I) 
= \int_{I\cap (-\infty,0]} \gamma_{\ell} (- y) \nu_{n,z}(dy) \le \sum_{i=0}^{\infty} \gamma_{\ell}(i)\nu_{n,z}(I\cap J_i)\,.
\end{equation}
We will denote by
$\Lambda_{n,z,I}$ the analog of $\Lambda_{n,z}$, but with 
the added restriction $\bar{\eta}_{v,n}(n-\ell) \in z+I$; then
$\E \Lambda_{n,z,I} = \rho^{n-\ell}
\P(E_{v, n}(z); \, \bar{\eta}_{v,n}(n-\ell) \in z+I)$, 
for any $v\in V_{n-\ell}$.

Set $L_{\ell} = (-\ell, -\ell^{2/5}]$; the following lemma 
shows that the main contribution to
$\E\Lambda_{n,z}$ is from values $y\in I := L_{\ell}$, as in (\ref{equpperbd}).
(The choice of the exponent $2/5$ here is somewhat arbitrary; only $0<2/5<1/2$ is used.)  In the lemma, we will treat $z$ and $\ell$ as
independent variables, and will only employ the relationship 
(\ref{eq-L-tilde-L}) at the end of the subsection.
\begin{lemma}\label{lem-Lambda-J-k}
For $\Lambda_{n,z,L_{\ell}}$ defined as above,
$$\lim_{z,\ell\to \infty} \liminf_{n\to \infty}  \frac{\E \Lambda_{n, z,L_\ell}}{\E \Lambda_{n, z}} = 1\,.$$
\end{lemma}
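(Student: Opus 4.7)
Write
\[
\E \Lambda_{n,z} \;=\; \rho^{n-\ell}\int_{-\infty}^{0} \gamma_{\ell}(-y)\,\nu_{n,z}(dy),
\]
and split the integration region $(-\infty,0]$ into three pieces: $A_1 = (-\ell^{2/5},0]$, $A_2 = L_{\ell} = (-\ell,-\ell^{2/5}]$, and $A_3 = (-\infty,-\ell]$. The plan is to show that both $\E \Lambda_{n,z,A_1}$ and $\E \Lambda_{n,z,A_3}$ are of order $o(z\mathrm{e}^{-\bar\theta z})$ as $\ell\to\infty$, uniformly in $n$ large, and then invoke the lower bound $\E \Lambda_{n,z} \gtrsim z\mathrm{e}^{-\bar\theta z}$ from \eqref{eq-Lambda-first-moment} to divide through.

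For the contribution from $A_1$, I would bound $\nu_{n,z}(J_i)$ for integers $0\le i\le \ell^{2/5}$ by the change of measure \eqref{eq-change-of-measure2} combined with the ballot-type estimate \eqref{eq-S-y-new} of Lemma \ref{prop-1DRW}, which is exactly the reasoning that produced \eqref{eq-nu}. This yields
\[
\nu_{n,z}(J_i) \;\lesssim\; \rho^{-(n-\ell)}\,\mathrm{e}^{-\bar\theta(z-i)}\,z\,(i+1)\,.
\]
Pairing this with $\gamma_{\ell}(i)\lesssim \ell^{-3/2}(i+\log\ell)\mathrm{e}^{-\bar\theta i}$ from Corollary \ref{cor-new}, the factors $\mathrm{e}^{\pm\bar\theta i}$ cancel and summing over $0\le i\le \ell^{2/5}$ gives
\[
\E \Lambda_{n,z,A_1} \;\lesssim\; z\mathrm{e}^{-\bar\theta z}\,\ell^{-3/2}\sum_{i=0}^{\ell^{2/5}}(i+1)(i+\log\ell) \;\lesssim\; z\mathrm{e}^{-\bar\theta z}\,\ell^{-3/10},
\]
which is $o(z\mathrm{e}^{-\bar\theta z})$ as $\ell\to\infty$.

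For the contribution from $A_3$, the ballot estimate \eqref{eq-nu} still applies in the moderate-deviation range $i\le C\sqrt{(n-\ell)\log(n-\ell)}$ and, beyond that, a direct moderate-deviation bound of the type used in \eqref{eq23here} gives the same shape times an additional Gaussian correction $g_{n-\ell,\delta}(\cdot)$; in any case $\nu_{n,z}(J_i)\lesssim \rho^{-(n-\ell)}\mathrm{e}^{-\bar\theta(z-i)}z(i+1)$ on the entire range. The essential new input is the Gaussian/exponential damping in Corollary \ref{cor-new}: $\gamma_{\ell}(i)\lesssim \ell^{-3/2}(i+\log\ell)\mathrm{e}^{-\bar\theta i}\,g_{\ell,\delta}(i)$, with $g_{\ell,\delta}(i)=\exp\{-\delta i^2/(\ell\log\ell)\}$ for $\ell\le i\le \ell\log\ell$ and $g_{\ell,\delta}(i)=\mathrm{e}^{-\delta i}$ for $i\ge\ell\log\ell$. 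Multiplying and summing, the $\ell^{-3/2}$ times the (Gaussian-)truncated second moment of $i$ yields $\sum_{i\ge \ell}(i+1)(i+\log\ell)\ell^{-3/2}g_{\ell,\delta}(i)\to 0$ as $\ell\to\infty$. Hence $\E \Lambda_{n,z,A_3} \le o_{\ell}(1)\cdot z\mathrm{e}^{-\bar\theta z}$.

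Combining these two estimates with $\E \Lambda_{n,z}\gtrsim z\mathrm{e}^{-\bar\theta z}$ produces
\[
\frac{\E \Lambda_{n,z,L_{\ell}}}{\E \Lambda_{n,z}} \;=\; 1-\frac{\E \Lambda_{n,z,A_1}+\E \Lambda_{n,z,A_3}}{\E \Lambda_{n,z}} \;\ge\; 1-o_{\ell,z}(1),
\]
and the claim follows upon letting first $n\to\infty$, then $\ell, z\to\infty$. The main technical point to get right will be the $A_3$ estimate when $i$ is large enough that the basic ballot estimate \eqref{eq-nu} does not apply: there the required decay rests entirely on the moderate-deviation improvement $g_{\ell,\delta}$ retained from Corollary \ref{cor-new}, which is precisely why Lemma \ref{lem-a-priori} was proved in the sharper form with $g_{n,\delta}$ rather than with a bare $z\mathrm{e}^{-\bar\theta z}$.
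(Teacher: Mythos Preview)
Your proposal is correct and follows essentially the same decomposition and estimates as the paper: split $(-\infty,0]$ into $(-\ell^{2/5},0]$, $L_\ell$, and $(-\infty,-\ell]$, bound $\nu_{n,z}(J_i)$ via the change of measure and the ballot estimate \eqref{equationABR1} (the paper's \eqref{eqwas54}), bound $\gamma_\ell(i)$ via Corollary~\ref{cor-new}, and divide through by the lower bound $\E\Lambda_{n,z}\gtrsim z\mathrm{e}^{-\bar\theta z}$.

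The one place where you work harder than necessary is the tail region $A_3$. You invoke the full Gaussian correction $g_{\ell,\delta}(i)$ from Corollary~\ref{cor-new} and then have to verify that $\ell^{-3/2}\sum_{i\ge \ell}(i+1)(i+\log\ell)\,g_{\ell,\delta}(i)\to 0$; this is true (the sum behaves like $\sqrt{\ell}\,(\log\ell)\,\mathrm{e}^{-\delta\ell/\log\ell}$), but the paper takes the shorter route of using the ``in particular'' form of Corollary~\ref{cor-new} with $\theta_\ell^*=\bar\theta+\delta$ for $i>\ell$, which gives a clean factor $\mathrm{e}^{-\delta i}$ and the immediate bound $\sum_{i\ge\ell}(i+1+\log\ell)^2\ell^{-3/2}\mathrm{e}^{-\delta i}\lesssim \mathrm{e}^{-\delta\ell/2}$. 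Your closing remark that the $g_{n,\delta}$ sharpening of Lemma~\ref{lem-a-priori} is ``precisely why'' the $A_3$ bound works is therefore slightly off: the coarser $\theta_\ell^*$ bound already suffices here, and the paper reserves the refined $g_{n,\delta}$ for places like \eqref{eqellbd} and \eqref{eq4.51new} in the second-moment computation. Also, your worry about the ballot estimate failing for very large $i$ is unnecessary: \eqref{equationABR1} holds for all $a\ge 0$ without restriction, and in any case the right-hand side of your bound on $\nu_{n,z}(J_i)$ grows with $i$, so it is trivially valid in that range.
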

\begin{proof}
Using (\ref{equpperbd}), it suffices to show that 
\begin{equation}
\label{eqpreeqwas54}
\lim_{\ell\rightarrow\infty} \sup_{z\ge 1} \limsup_{n\rightarrow\infty} \rho^{n-\ell}{\Big(}\sum_{i=0}^{\infty} 
\gamma_{\ell}(i)\nu_{n,z}(I_h\cap J_i){\Big)}{\Big /} \E \Lambda_{n, z} = 0
\end{equation}
for $h=1,2$, with $I_1 = (-\infty, -\ell ]$ and $I_2 = (-\ell^{2/5},0]$.

As in previous applications, (\ref{equationABR1}) and the change of measure (\ref{eq-change-of-measure2}), together with
(\ref{eq-postPQ}),  imply that, for $z,\ell \le \sqrt{n}$, 
\begin{equation}
\label{eqwas54}
\nu_{n,z}(J_i) \lesssim (i+1)z e^{-\bar{\theta} (z-i)}\rho^{-(n-\ell)} 
\end{equation}
for $i\ge 0$.  Also, by Corollary \ref{cor-new},
\begin{equation}
\label{eqaftereqwas54}
\gamma_{\ell}(i) \lesssim (i+1 + \log \ell)\ell^{-3/2} 
\mathrm{e}^{-\theta_{\ell}^*i} \,.
\end{equation}
Combining (\ref{eqwas54}) and (\ref{eqaftereqwas54}), it follows that
$$ \sum_{i=\lfloor \ell \rfloor}^{\infty} \gamma_{\ell}(i)\nu_{n,z}( J_i) 
\lesssim \rho^{-(n-\ell)} z\mathrm{e}^{-\bar{\theta}z}
\sum_{i=\lfloor \ell \rfloor}^{\infty} 
(i+1 + \log \ell)^2\ell^{-3/2} \mathrm{e}^{-\delta i }
\lesssim  \mathrm{e}^{-\delta \ell/2 }
\rho^{-(n-\ell)}z\mathrm{e}^{-\bar{\theta}z}\,,$$
where $\delta >0$ is as in Corollary \ref{cor-new}, and
$$ \sum_{i=0}^{\lfloor \ell^{2/5} \rfloor} \gamma_{\ell}(i)\nu_{n,z}( J_i)
\lesssim \rho^{-(n-\ell)} 
z\mathrm{e}^{-\bar{\theta}z}\sum_{i=0}^{\lfloor \ell^{2/5} \rfloor} 
(i+1 + \log \ell)^2\ell^{-3/2}  
\lesssim \ell^{-3/10}\rho^{-(n-\ell)} z\mathrm{e}^{-\bar{\theta}z}\,. $$
These bounds, together with \eqref{eq-clear240113} and 
(\ref{eq-lower-Gamma}), imply (\ref{eqpreeqwas54}).
\end{proof}

We employ the previous lemma, together with (\ref{eq-S-z})
to demonstrate Proposition~\ref{prop-asymptotic-first-moment}.
\begin{proof}[Proof of Proposition~\ref{prop-asymptotic-first-moment}]
Write $x_n = \ell m_n/n - c_1\ell$, set 
$J_i^N = (-(i-1)/N +x_n,-i/N +x_n]$, $i=0,1,\ldots$, for given $N \in 
\mathbb{Z}_+$.  Note that, 
for fixed $\ell$, $x_n \rightarrow_{n\rightarrow\infty} 0$.
Similar reasoning to that leading to (\ref{eqwas54}), but with the
sharper (\ref{eq-S-z}) in place of 
(\ref{equationABR1}), implies that, for fixed $N$,
\begin{equation}
\label{eqwas54precise}
\lim_{z,i\rightarrow\infty}\underset{n\to \infty}
{\mathop{\underline{\overline{\lim }}}}\,
\rho^{n-\ell} \nu_{n,z}(J_i^N){\Big /} {\Big (} \beta^* ((i/N-x_n)/N)z 
e^{-\bar{\theta} (z-i/N+x_n)} {\Big )}
=1\,,
\end{equation}
where ${\mathop{\underline{\overline{\lim }}}}_{n\rightarrow\infty}f(n)$ 
is shorthand for the bounds given by both 
$\limsup_{n\rightarrow\infty} f(n)$
and $\liminf_{n\rightarrow\infty} f(n)$.
For the moment treating $z$ and $\ell$ as independent variables,  it
follows from (\ref{eqwas54precise}) that
$$\lim_{z,\ell\rightarrow\infty}\underset{n\to \infty }{\mathop{\underline{\overline{\lim }}}}\,
\rho^{n-\ell} \sum_{i/N\in -L_{\ell}}
\gamma_{\ell}(i/N)\nu_{n,z}(J_i^N) {\Big /}
{\Big (} \beta^*\sum_{i/N\in -L_{\ell}} 
\gamma_{\ell}(i/N) ((i/N-x_n)/N)z 
e^{-\bar{\theta} (z-i/N+x_n)} {\Big )}  = 1 $$
for fixed $N$.
Application of bounded convergence to the denominator, 
as $n\rightarrow\infty$, therefore implies
\begin{equation}
\label{eqcombequations}
\lim_{z,\ell\rightarrow\infty}\underset{n\to \infty }{\mathop{\underline{\overline{\lim }}}}\,
\rho^{n-\ell} \sum_{i/N\in -L_{\ell}}
\gamma_{\ell}(i/N)\nu_{n,z}(J_i^N) {\Big /}      
 {\Big (} \beta^*\sum_{i/N\in -L_{\ell}} \gamma_{\ell}(i/N) (i/N^2)z 
e^{-\bar{\theta} (z-i/N)}{\Big )}  =1\,.
\end{equation}

On the other hand, because of  
the monotonicity of $\gamma_{\ell}(\cdot)$ and
$\nu_{n,z}(J_i^N)$ on $-L_{\ell}$, for given $N$,
$$1 - 2\bar{\theta}/N \le
\lim_{z,\ell\rightarrow\infty}\underset{n\to \infty }{\mathop{\underline{\overline{\lim }}}}\
\E \Lambda_{n, z, L_\ell} 
{\Big /} {\Big (} \rho^{n-\ell} \sum_{i/N\in -L_{\ell}}
\gamma_{\ell}(i/N)\nu_{n,z}(J_i^N){\Big )}  \le  1 + 2\bar{\theta}/N \,, $$
where one uses
(\ref{eqwas54precise}) to bound
$\nu_{n,z}(J_{i-1}^N)/\nu_{n,z}(J_i^N)$.
Letting $N\rightarrow\infty$, it 
follows from this and (\ref{eqcombequations}) that
\begin{equation}
\label{eqmostlythru}
\lim_{z,\ell\rightarrow\infty}\underset{n\to \infty }{\mathop{\underline{\overline{\lim }}}}\
\E \Lambda_{n, z, L_\ell} {\Big /}
{\Big (} \beta^* ze^{-\bar{\theta} z}\int_{-L_{\ell}} y  
e^{\bar{\theta} y} \gamma_{\ell}(y)\,dy {\Big )}  =1 \,.
\end{equation}
It follows from Lemma \ref{lem-Lambda-J-k} that the analog of
(\ref{eqmostlythru}) also holds, with $\E \Lambda_{n, z}$ in 
place of $\E \Lambda_{n, z, L_\ell}$, and so  
\begin{equation}
\label{eqwillbe66}
\lim_{z,\ell\rightarrow\infty}\underset{n\to \infty }{\mathop{\underline{\overline{\lim }}}}\
\E \Lambda_{n, z} {\Big /}
{\Big (}  \beta^* ze^{-\bar{\theta} z}\int_{-L_{\ell}} y  
e^{\bar{\theta} y} \gamma_{\ell}(y)\,dy {\Big )}  =1 \,.
\end{equation}
(Using the same reasoning
as in Lemma \ref{lem-Lambda-J-k}, one could also replace the region of
integration $-L_{\ell}$ by $[0,\infty)$.)
%Consequently, for given $\varepsilon >0$ and $\ell \ge \ell_0$, 
%with $\ell_0$ large but fixed, the limiting ratio will lie 
%within $(1-\varepsilon, 1+ \varepsilon)$ as $z\rightarrow\infty$.  

By (\ref{eq-prop4.7}) of Proposition \ref{prop-gff-first-moment-dictates}, 
for $\ell = \ell (z)$ satisfying (\ref{eq-L-tilde-L}), 
$\P(\eta_n^* > m_n +z )/\E \Lambda_{n, z} \rightarrow 1$
as first $n\rightarrow\infty$ and then $z\rightarrow\infty$.
Since $\P(\eta_n^* > m_n +z )$ does not depend on the choice of  $\ell (z)$,
neither does $\E \Lambda_{n, z}=\E\Lambda_{n,z}^{\ell(z)}$ in the limit when
$\ell(z)$ is chosen according to \eqref{eq-L-tilde-L}. Explicitly,
if $\ell(z)$, $\ell'(z)$ both satisfy \eqref{eq-L-tilde-L}, then
$$\lim_{z\rightarrow\infty}
\underset{n\to \infty }{\mathop{\underline{\overline{\lim }}}}\
\frac{\E \Lambda_{n, z}^{\ell(z)}}
{\E \Lambda_{n, z}^{\ell'(z)}}=1\,.$$
Hence, because of 
(\ref{eqwillbe66}), 
$$\underset{\ell,\ell'\to \infty }{\lim }
\frac{  \int_{-L_{\ell}} y  e^{\bar{\theta} y} \gamma_{\ell}(y)\,dy }
{  \int_{-L_{\ell'}} y  e^{\bar{\theta} y} \gamma_{\ell'}(y)\,dy }
=1\,.$$
Therefore, 
 $\beta^*\! \int_{-L_{\ell}} y  e^{\bar{\theta} y} \gamma_{\ell}(y)\,dy $
must converge to a limit $\alpha^*$ as  $\ell\to\infty$.
%n\rightarrow\infty$ and then 
%$z\rightarrow\infty$, for any choice of $\ell (z)$ satisfying 
%(\ref{eq-L-tilde-L}).
On account of Lemma \ref{lem-Gamma-Lambda} and (\ref{eq-lower-Gamma}),
$\alpha^* > 0$.
Together with (\ref{eqwillbe66}), this implies the
limits in (\ref{eqmainresultinsecondsection}).
\end{proof}

\section{Proof of Theorem \ref{thm1.2}}
\label{sec-maintheorem}

We first show that, under the assumptions of Theorem \ref{thm1.2}, 
$\eta^*_n - m_n$ converges in distribution.  One obtains, 
by decomposing
the BRW over the 
time intervals $[0,k]$ and $[k,n]$,
\begin{equation}
\label{eq4.1.1}
\begin{split}
\P (\eta^*_n - m_n \le z\, |\, \mathcal{F}_k) &= 
\prod_{v' \in V_k} \P (\eta_{v',k}  + \tilde{\eta}^*_{n-k} 
- m_n \le + z) \\
&= \prod_{v' \in V_k} \P (\tilde{\eta}^*_{n-k} 
-  m_{n-k} \le m_n - m_{n-k} - \eta_{v',k} + z)\,,
\end{split}
\end{equation}
where $\eta_{v',k}$ are the positions of the particles for the process 
$\eta_.$ at time $k$, and $\tilde{\eta}_.$ is an independent BRW.
Note that, from the definition of $m_n$, 
$m_n - m_{n-k} = c_1k + \delta_k(n)$, with 
$\delta_k(n)\rightarrow_{n\rightarrow\infty}0$ for fixed $k$, and
that, by Proposition \ref{prop-limiting-tail-gff},  the
distribution of $\eta^*_{n-k} - m_{n-k}$ has right tail 
$\alpha^*z\mathrm{e}^{-\bar{\theta}z}$ 
for large $n-k$.  Since 
\begin{equation}
\label{eq4.1.1new}
\eta^*_k - c_1 k \rightarrow_{k\rightarrow\infty} -\infty
\end{equation}
in probability, it follows that, for large $k$ and much larger $n$,
the logarithm of the right hand side of (\ref{eq4.1.1}) is asymptotically

\begin{equation}
\label{eq4.1.2}
\sum_{v' \in V_k} \log [1 - \alpha ^* z_{v',k} \mathrm{e}^{-\bar{\theta}z_{v',k}} ]
\sim -\alpha^*\sum_{v'\in V_k} z_{v',k}\mathrm{e}^{-\bar{\theta}z_{v',k}},
\end{equation}
for fixed $z$,
where $z_{v',k} = c_1 k -\eta_{v',k} +z $. 
Setting
\begin{equation*}
Z_k = \sum_{v' \in V_k}(c_1k - \eta_{v',k})\mathrm{e}^{-\bar{\theta}(c_1k - \eta_{v',k})}\,, \quad 
Y_k = \sum_{v' \in V_k}\mathrm{e}^{-\bar{\theta}(c_1k - \eta_{v',k})}\,,
\end{equation*}
this equals  $-\alpha ^*\mathrm{e}^{-\bar{\theta}z}(Z_k + zY_k)
\sim -\alpha ^*\mathrm{e}^{-\bar{\theta}z}Z_k $, since
(\ref{eq4.1.1new})
implies that $Y_k/Z_k \rightarrow 0$ in probability.

It follows from the last paragraph that, for appropriate $A_k \in \mathcal{F}_k$ and 
$\epsilon_k > 0$, with $Z_k > 0$ on $A_k$ and
$\P(A^c_k) \le \epsilon_k \rightarrow_{k\rightarrow\infty} 0$, 
\begin{equation}
\label{eq4.2.1}
1-\epsilon_k \le 
\log \P (\eta^*_n - m_n \le z\, |\, \mathcal{F}_k) {\big /}
{\big (} -\alpha ^*\mathrm{e}^{-\bar{\theta}z}Z_k {\big )}
\le 1+\epsilon_k \quad \text{on } A_k,
\end{equation}
for $n\ge n_k$ and appropriate $n_k$, with 
$n_k \rightarrow_{k\rightarrow\infty}\infty$, and hence 
\begin{equation}
\label{eq4.2.2}
\E[\exp\{-(1+\epsilon_k)\alpha ^*\mathrm{e}^{-\bar{\theta}z}Z_k\};A_k] 
\le \P (\eta^*_n - m_n \le z) \le 
\E[\exp\{-(1-\epsilon_k)\alpha ^*\mathrm{e}^{-\bar{\theta}z}Z_k\};A_k] 
+ \epsilon_k\,.
\end{equation}
As $k\rightarrow\infty$, the difference of the left (and right) hand side 
of (\ref{eq4.2.2}) and   
$\E[\exp\{-\alpha ^*\mathrm{e}^{-\bar{\theta}z}Z_k\}; A_k]$ converges to $0$. 
(To see this, note that $\exp\{-\mathrm{e}^{-x}\}$ is uniformly 
continuous in $x$.)  Consequently, for fixed $z$,
\begin{equation}
\label{eq4.2.3}
\lim_{k\rightarrow\infty}\limsup_{n\rightarrow\infty}
|\P (\eta^*_n - m_n \le z) -
\E[\exp\{-\alpha ^*\mathrm{e}^{-\bar{\theta}z}Z_k\};A_k]| = 0\,.
\end{equation}
It follows from this that, for some function $w(\cdot)$,
\begin{equation}
\label{eq4.2.4}
\P (\eta^*_n - m_n \le z) \rightarrow_{n\rightarrow\infty} w(z) \,.
\end{equation}
One can check that $\lim_{z\rightarrow -\infty} w(z) = 0$ (since $\max_v\eta_{v,n}^{v'} -m_n$ are independent for 
different $v'\in V_1$, but the limits of their distributions
must share this same limit as first $n\rightarrow\infty$
and then $z\rightarrow -\infty$).
So $\eta^*_n - m_n $ is in fact tight,
and hence $w(\cdot)$ is a distribution function.
This completes the proof of the first part of Theorem \ref{thm1.2}.

We still need to verify (\ref{eq1.4}).  To show this, we will first show
that $\lim_{n\rightarrow}Z_n$ exists a.s.  (In probability convergence 
of $Z_n$ is automatic from (\ref{eq4.2.3}), although we do not use this.)
Because of (\ref{eq4.2.4}), 
$m_{n+1} - m_n \rightarrow_{n\rightarrow\infty} c_1$, and the 
BRW property,
\begin{equation*}
w(0) = \E[\prod_{v\in V_{1}}w(c_1-\eta_{v,1})].
\end{equation*}
Consequently, $W_{n}:=\prod_{v\in V_{n}}w(c_1n-\eta_{v,n})$
is a martingale with respect to $\mathcal{F}_{n}$.  Since $W_n$ is
nonnegative,  $W_n \rightarrow_{n\rightarrow\infty}W$ exists a.s.
by the martingale convergence theorem. On the other hand,
it follows from the definition of $c_1$ that 
\begin{equation*}
\E [\sum_{v\in V_1}\mathrm{e}^{-\bar{\theta}(c_1 - \eta_{v,1})}] = 1\,,
\end{equation*}
and hence $Y_n$ is also a nonnegative martingale.  Another application
of the martingale convergence theorem implies that 
$Y_n \rightarrow_{n\rightarrow\infty}Y$ exists a.s.  
(In fact, $Y=0$ since $Y_k/Z_k\to 0$ in probability.)
This implies,
in (\ref{eq4.1.1new}), the stronger a.s. convergence in fact holds.

It follows from Proposition \ref{prop-limiting-tail-gff} and (\ref{eq4.2.4})
that $1-w(z) \sim \alpha^*z\mathrm{e}^{-\bar{\theta}z}$ as
$z\rightarrow\infty$.  This same property was exploited for
$\P (\eta^*_n - m_n > z)\, |\, \mathcal{F}_k) $ in the first part of
the proof, which we now also use for $W_n$.  Proceeding 
as in (\ref{eq4.1.1})--(\ref{eq4.1.2}), but employing a.s. rather 
than in probability convergence, one obtains the following a.s. 
analog of (\ref{eq4.2.1}),
\begin{equation*}
(\log W) {\big /} \lim_{k\rightarrow\infty}(-\alpha ^*Z_k) = \lim_{k\rightarrow\infty}
(\log W_k) {\big /}(-\alpha ^*Z_k) = 1 \quad \text{a.s.}
\end{equation*}
Therefore, $Z = \lim_{k\rightarrow\infty}Z_k$ exists a.s., with
$Z = -(\alpha^*)^{-1}\log W$.

The reasoning leading up to (\ref{eq4.2.1}) also implies the a.s. version,
\begin{equation*}
\begin{split}
\lim_{k\rightarrow\infty}\liminf_{n\rightarrow\infty} 
&(\log \P (\eta^*_n - m_n \le z\, |\, \mathcal{F}_k)) {\big /}
(-\alpha ^*\mathrm{e}^{-\bar{\theta}z}Z_k) \\
&= \lim_{k\rightarrow\infty}\limsup_{n\rightarrow\infty} 
(\log \P (\eta^*_n - m_n \le z\, |\, \mathcal{F}_k)) {\big /}
(-\alpha ^*\mathrm{e}^{-\bar{\theta}z}Z_k)
=1 \quad \text{a.s. } 
\end{split}
\end{equation*}
Plugging in $Z = \lim_{k\rightarrow\infty}Z_k$ reduces the above limits
to (\ref{eq1.4}).  Taking expectations on both sides of \eqref{eq1.4},
and 
applying bounded convergence to the limit as $z\to -\infty$,
together with  
$w(-\infty) = 0$,
implies that $Z$ is a.s. strictly positive.  
This completes the proof of the second part of Theorem \ref{thm1.2}.

\section{The lattice case}
\label{sec-lattice}
The methods in this paper allow one to also handle the lattice case,
which is mentioned as an open problem in Aidekon \cite{Aidekon}. 
Unfortunately, in many places both the statements and some steps in the 
proof need to be modified, albeit in a minor way. To avoid repetitions
or burdening the main text with extra details geared toward the lattice case,
we decided to only summarize in this short section the result and the 
needed adaptations, and to leave the actual proof for either future work or
the interested reader.

Recall that a random walk with increments distribution
$w(\cdot)$ is called \textit{lattice} if, for some $y$,
the support
of $w(\cdot) + y$ 
is contained in 
a discrete subgroup of $\mathbb{R}$. By rescaling and shifting, we can
and will assume that $y=0$ and that the discrete subgroup is $\mathbb{Z}$.
%(The factor $4$ has no particular significance, and is used for convenience in 
%order to minimize the changes needed in Section \ref{sec-limittail}.)
%
We define $m_n$ and $Z_k$ as in the nonlattice case, and set 
$A_n=\mathbb{Z}-m_n$. 

The analog of Theorem \ref{thm1.2} is
the following.
\begin{theorem}
\label{thm1.2a}
Assume that $\eta_n$ is a lattice branching random walk
satisfying (\ref{eq1.2}), with $K < \infty$.  Then, 
$Z = \lim_{k\rightarrow\infty}Z_k$
exists and is finite and positive with probability $1$, and there exists a
constant $\alpha ^*>0$ so that
%, for each $z\in \mathbb{R}$,
\begin{equation}
\label{eq1.4lattice}
\lim_{k\rightarrow\infty}\lim_{n\rightarrow\infty}
\sup_{z\in A_n} |\P(\eta^*_n \le m_n +z |\,\mathcal{F}_k)
- \exp\{-\alpha ^*Z \mathrm{e}^{-\bar{\theta}z}\}|=0 \quad \text{a.s.}
\end{equation}
\end{theorem}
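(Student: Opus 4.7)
The plan is to adapt the arguments of Sections \ref{sec-prelim}--\ref{sec-maintheorem} to the lattice setting, paying particular attention to the uniformity in $z \in A_n$ demanded by \eqref{eq1.4lattice}. As a first step, I would replace the random walk estimates of Lemmas \ref{lemmaABRold}--\ref{prop-1DRW} by their lattice analogs. For a tilted lattice random walk, the local central limit theorem together with the strong ballot estimates of \cite{ABR} yields the asymptotic
\[
\P^{(n)}(S_n^{(n)}=a,\ S_k^{(n)} \geq -y \text{ for all } 0<k<n) \sim \tilde\beta_{y,a}\, n^{-3/2},
\]
for integer $a,y$, with $\tilde\beta_{y,a}/(y(y+a)) \to \tilde\beta^*$ as $y,y+a \to \infty$. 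The subsequent preliminary estimates of Subsection \ref{subsec-longlemma} and the first- and second-moment bounds of Section \ref{sec-prelim} transfer with only cosmetic modifications, since probabilities of single lattice points now play the role of probabilities of unit intervals, and all $\mathrm{e}^{-\bar\theta z}$ factors carry through intact.

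Next, the derivation of $\P(\eta^*_n > m_n + z) \sim \alpha^* z \mathrm{e}^{-\bar\theta z}$ in Section \ref{sec-limittail} is adapted by replacing the interval measure $\nu_{n,z}$ in \eqref{equpperbd} and the integral computation in the proof of Proposition~\ref{prop-asymptotic-first-moment} by discrete sums over $A_{n-\ell}$: the limiting quantity $\int y \mathrm{e}^{\bar\theta y}\gamma_\ell(y)\,dy$ becomes a convergent series, yielding the same constant $\alpha^* > 0$. The second-moment argument of Lemma~\ref{lem-second-moment} goes through unchanged. Crucially, the $z$-dependence enters only through multiplicative prefactors of the form $z\mathrm{e}^{-\bar\theta z}$ and through the starting lattice point of the tilted walk, so the resulting asymptotic is uniform in $z \in A_n$ throughout the range $|z| \le C\sqrt{n\log n}$; the complementary range is controlled by Corollary~\ref{lem-prelim-tail}.

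For \eqref{eq1.4lattice} itself, I would apply the decomposition \eqref{eq4.1.1}. The key observation is that, for $z \in A_n$ and $\eta_{v',k} \in \mathbb{Z}$, the shifted argument $m_n - m_{n-k} - \eta_{v',k} + z$ lies in $A_{n-k}$, so the uniform tail asymptotic applies inside the product. Taking logarithms, using the almost sure convergence $Y_k/Z_k \to 0$ (deduced from martingale convergence of $W_n$ and $Y_n$ exactly as in Section \ref{sec-maintheorem}, which also yields the existence, finiteness, and positivity of $Z = \lim_k Z_k$) reduces \eqref{eq1.4lattice} to a supremum statement with the claimed exponential form. Uniformity over $z \in A_n$ then follows from three ingredients: (i) the uniform tail asymptotic just established; (ii) uniform continuity of $z \mapsto \exp\{-\alpha^*Z\mathrm{e}^{-\bar\theta z}\}$ on compact intervals; and (iii) for $|z|$ large, both sides of \eqref{eq1.4lattice} are uniformly close to $0$ or to $1$, with the $z \to -\infty$ case following from a uniform lower-tail bound derived from the uniform tail asymptotic together with tightness.

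The main obstacle is securing the uniformity in $z \in A_n$ of the tail asymptotic. In the nonlattice proof, pointwise convergence sufficed and the smooth integral prefactor hid any $z$-dependence; in the lattice setting one must carry error terms in the local limit theorem and in the ballot asymptotic uniformly in the endpoint. This is achieved by keeping the remainder estimates for $\nu_{n,z}(\{a\})$ uniform in the lattice point $a$ within a window of width $O(\sqrt{n\log n})$ about $z$, via standard Edgeworth bounds for the tilted random walk after the exponential change of measure.
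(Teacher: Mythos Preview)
Your outline follows essentially the same route as the paper's own sketch in Section~\ref{sec-lattice}: replace the nonlattice random walk inputs (Caravenna's conditioned local CLT, Feller's asymptotic expansion) by their lattice analogs, rerun Sections~\ref{sec-prelim}--\ref{sec-maintheorem} with sums over lattice points in place of integrals, and exploit the observation that $z\in A_n$ forces the shifted argument in \eqref{eq4.1.1} to lie in $A_{n-k}$. That last point is exactly right and is the mechanism that makes the decomposition work.

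There is, however, one technical subtlety the paper flags that your sketch does not address, and it is the place where the lattice adaptation is genuinely delicate rather than cosmetic. The transfer from the unperturbed walk $S_k$ to the tilted walk $S_k^{(n)}$ (with drift $d^{(n)}=O((\log n)/n)$) goes through Lemma~\ref{lemjg} in the appendix, whose proof in the nonlattice case relies on enlarging the target interval by a small $\delta>0$ to absorb the accumulated shift $\ell_{1,2}^{(n)}$ over the final subinterval (step~(C), display~\eqref{eqjg3.2}). In the lattice case this $\delta$-enlargement is vacuous---the lattice point either lies in the interval or it does not---so the paper instead sets $\delta=0$ and replaces the boundary level $y$ by a perturbed $y'$ with $|y'-y|<1$, chosen (via an averaging argument over $y'\in y+[0,1)$) so that the walk is unlikely to visit the thin strip between the two shifted boundaries. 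Your appeal to ``Edgeworth bounds for the tilted random walk'' handles the endpoint distribution but does not by itself control the boundary-crossing comparison; you would need either this randomized-boundary device or an equivalent argument to close the gap between $S_k$ and $S_k^{(n)}$ in the lattice setting.
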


Note that, by taking expectations in (\ref{eq1.4lattice}), one obtains
\begin{equation}
\label{varianteq1.4lattice}
\lim_{n\to\infty} 
\sup_{z\in A_n} |\P(\eta^*_n \leq m_n+z)-\E
\exp\{-\alpha ^*Z \mathrm{e}^{-\bar{\theta}z}\}
|=0\,.
\end{equation}
If one wishes, one can also rephrase  (\ref{eq1.4lattice}) and (\ref{varianteq1.4lattice}) as convergence results over all $\mathbb{R}$ (rather than over $A_n$) by appropriate interpolation  of $\P(\cdot)$ within the intervals between lattice points.   

We indicate the main modifications in the argument for obtaining Theorem \ref{thm1.2a} in place of Theorem \ref{thm1.2}, while including 
some details for the curious reader. 
Most importantly, we need a modified version of the technical estimates 
in Subsection \ref{subsec-longlemma},  with the
 main modifications being in \eqref{eq-S-y}--(\ref{eq-S-y-gamma}) of Lemma 
\ref{prop-1DRW}.   In the lemma, we replace
the interval
$(a,b)$  by the 
half open interval $(a,b]$, with $b-a=1$, which
contains exactly one lattice point.  Also, we replace
the term $y$, $y\ge 1$, given in the definition of the 
boundary, by an appropriately chosen $y'$ with 
$|y'-y| < 1$.

In the proofs in the appendix of the different parts of 
Lemma \ref{prop-1DRW}, various details for the lattice case differ from
those for the nonlattice case.  
One needs to
replace the conditional local CLT  of \cite[Theorem 1]{Car} and the
asymptotic expansion of \cite[Theorem 16.4.1]{Fell} by their lattice
analogs \cite[Theorem 2]{Car} and \cite[Theorem 16.4.2]{Fell}.
The inequality (\ref{eqjg0.1}) obtained by reversing time 
can be replaced by a corresponding equality
if the interval $(a,b)$ is replaced by a single point; this
simplifies somewhat the proofs of (\ref{eq-S-y}) and (\ref{eq-S-z}).
In (\ref{eqjg1.1}) of the important Lemma \ref{lemjg}, one sets $\delta = 0$, since
expanding the interval by $\delta >0$ no longer ensures the inequality
(\ref{eqjg3.2}) needed for step (C) of the proof; rather, in 
(\ref{eqjg1.1}), one instead replaces the term $y$ by an appropriate $y'$
satisfying $|y'-y|<1$.  (The existence of such a $y'$ can be shown by
noting that
the maximum separation at any time between the two boundaries 
in (\ref{eqjg3.2}) is of order $(\log n)/n^{3/4}$, and considering the probability
that the random walk visits this region at some time, for   
$y'$ is chosen randomly in $[0,1)+y$.  Since the time interval
has length $\lfloor n^{1/4}\rfloor$, 
application of the union bound over these times shows that
the probability of this event occurring is
of smaller order than $1$.)

Once the estimates in Subsection \ref{subsec-longlemma} are obtained, the rest of the
proof in Sections \ref{sec-prelim}-- 
\ref{sec-maintheorem} proceeds for the most part as in the nonlattice case,  
with the change that,
in Section \ref{sec-limittail}, all limits in $z$ are taken over lattice points, which is reflected in the argument in Section \ref{sec-maintheorem}.
In particular,  (\ref{eqforP3.1}) of Proposition \ref{prop-limiting-tail-gff} and (\ref{eqmainresultinsecondsection}) of Proposition \ref{prop-asymptotic-first-moment} need to be modified in a manner analogous to (\ref{eq1.4lattice}) of Theorem \ref{thm1.2a}, with $z$ restricted to
$A_n$ at a given  time $n$.  (In the proof of Proposition 
\ref{prop-asymptotic-first-moment}  at the end
of the subsection, one also sets $N=1$ rather than letting $N\rightarrow\infty$.)  One also needs to modify slightly the definitions of the
terms $E_{v',n}(z)$, $F_{v',n}(z)$, and $G_n(z)$, 
given in (\ref{eq-big-definition}), because of the double role played
by $z$ there, which appears in both the definition of the boundary and
the value of the trajectory at time $n$; in the first instance, one
instead employs $z'$ with $|z'-z| < 1$.  (This modification does not affect
the bounds that follow because of the upper bound in (\ref{eq3.50nn}).)
The reasoning in Section \ref{sec-maintheorem} is analogous to that
for the nonlattice case, although one needs to put more effort
into constructing the function $w(\cdot)$ in (\ref{eq4.2.4}), because of 
the restraint $z\in A_n$, at time $n$, on the analog of the left hand side of (\ref{eq4.2.4}).  (To get around this, one can employ the limit  
$z_{n_i} \rightarrow_{i\rightarrow\infty} z$, for any fixed $z$, 
which will hold 
at appropriate times $n_i\rightarrow_{i\rightarrow \infty}
\infty$, and points $z_{n_i}$ with $z_{n_i} \in A_{n_i}$.)

\section*{Appendix A}
\renewcommand{\thesection}{\Alph{section}}
\setcounter{section}{1}
The appendix is devoted to the demonstration of 
Lemma \ref{lemmaABR} and Lemma  \ref{prop-1DRW}.  
The special case of Lemma \ref{lemmaABR} with $S_k^{(n)} = S_k$,
i.e., $d^{(n)} \equiv 0$,
was given in Lemma \ref{lemmaABRold}.  In the first subsection of
the appendix, we
demonstrate the special case of Lemma \ref{prop-1DRW} with 
$d^{(n)} \equiv 0$ and, for (\ref{eq-S-y-new}), when $j=n$ is also assumed.
In the second subsection, we then show, in Corollary \ref{corjg},
that essentially the same asymptotic behavior holds for $\{S_k\}_{k=0,\ldots,n}$
and $\{S_k^{(n)}\}_{k=0,\ldots,n}$ as $n\rightarrow\infty$, when
$d^{(n)} = O((\log n)/n)$.  
Lemmas \ref{lemmaABR} and \ref{prop-1DRW} will follow from
Lemma \ref{lemmaABRold},
the special case of Lemma \ref{lemmaABR}, and 
Corollary \ref{corjg}.

Variants of the following inequality will be used several times in 
the appendix.  Let $\tilde{S}_k$ be the random walk obtained
by reversing time, i.e., 
$\tilde{S}_k = \sum_{i=1}^k (-X_i)$, 
and define $\tilde{S}_k^{(n)}$ by 
$\tilde{S}_k^{(n)} = \sum_{i=1}^k -({X_i} + {d}^{(n)})$ for $k\le n$.
It is easy to check that, for a given choice of $a$, $b$, and $y$,
with $a<b$ and $y\ge 0$, 
\begin{equation}
\label{eqjg0.1}
\begin{split}
& \P(\tilde{S}_n^{(n)}\in (-b, -a), 
 \tilde{S}_k^{(n)}\geq -y- h(k\wedge (n-k)) -a \mbox { for all } 0 < k < n) \\
& \quad \le \P(S_n^{(n)}\in (a, b), 
S_k^{(n)}\geq -y- h(k\wedge (n-k)) \mbox { for all } 0 < k < n) \\
& \quad \le \P(\tilde{S}_n^{(n)}\in (-b, -a), 
\tilde{S}_k^{(n)}\geq -y- h(k\wedge (n-k)) -b \mbox { for all } 0 < k < n). 
\end{split}
\end{equation}
By first partioning $(-b,-a)$ into smaller intervals and then applying
the analog of (\ref{eqjg0.1}) to each subinterval, one obtains more
accurate bounds for the middle probability in (\ref{eqjg0.1}).

\subsection{Demonstration of Lemma \ref{prop-1DRW}, 
with $d^{(n)} = 0$}

Here, we demonstrate Lemma  \ref{prop-1DRW} with $d^{(n)} = 0$.  We
also restrict (\ref{eq-S-y-new}) to the case $j=n$.

For the first two parts of Lemma  \ref{prop-1DRW},
we will employ
the following conditioned local central limit theorem, which follows from
 [Caravenna \cite{Car}, Theorem 1].  As at the beginning of Section 2, 
$\{X_k\}_{k=1,2,\ldots}$ denotes independent copies of a mean zero
random variable $X$ and $S_n = \sum_{k=1}^n X_k$; $X$ is also 
assumed to be nonlattice.
We denote by $\mathcal{C}_n$ the set where $S_1,S_2,\ldots,S_n > 0$.

\begin{prop}\label{prop-car}
Let $X$ be as above, with variance $1$.  Then,
\begin{equation}
\label{eqA0.1}
\sup_{x\in \mathbb{R_+}} |n^{1/2}\, \P(S_n\in [x,x+q) |\,\mathcal{C}_n) 
- q x \mathrm{e}^{-x^2/2n}/n^{1/2}|\rightarrow_{ n\rightarrow\infty} 0 
\end{equation}
uniformly over  $q$ in compact sets in $\mathbb{R}_+$.
\end{prop}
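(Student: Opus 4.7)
The plan is to derive Proposition \ref{prop-car} directly from Theorem 1 of Caravenna \cite{Car}, which provides a uniform local limit theorem for non-lattice random walks conditioned to stay positive. The limiting density that appears there is, as expected from the invariance principle, the time-1 marginal of the standard Brownian meander, namely $y\, e^{-y^2/2}$ for $y \geq 0$.

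First I would decompose
$$\P(S_n\in[x,x+q)\mid \mathcal{C}_n) = \frac{\P(\{S_n\in[x,x+q)\}\cap\mathcal{C}_n)}{\P(\mathcal{C}_n)}\,,$$
and apply Caravenna's theorem to the numerator to obtain, uniformly in $x\geq 0$ and uniformly in $q$ on a given compact set,
$$\P(\{S_n\in[x,x+q)\}\cap\mathcal{C}_n) = \frac{q}{\sqrt{n}}\cdot \frac{x}{\sqrt{n}}\, e^{-x^2/2n}\cdot \P(\mathcal{C}_n)\,(1+o(1))\,,$$
with $\P(\mathcal{C}_n)\sim c_0/\sqrt{n}$ from the classical Sparre Andersen asymptotic (itself a consequence of the same local-limit circle of ideas). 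Dividing and multiplying by $\sqrt{n}$ yields precisely $qx\, e^{-x^2/2n}/\sqrt{n}$, as in \eqref{eqA0.1}, with a uniform $o(1)$ error. The uniformity in $q$ on compact subsets of $\mathbb{R}_+$ is automatic because $q$ enters the asymptotic only linearly and the local-CLT remainder in \cite{Car} depends on $q$ only through the length of the target window.

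The only real obstacle is the bookkeeping required to match Caravenna's notation (which states the result in terms of the renewal measure of weak descending ladder heights, or equivalently a harmonic function for the walk killed upon entering $(-\infty,0]$) to the explicit meander density $y e^{-y^2/2}$ appearing in \eqref{eqA0.1}. Both normalizations must agree at the level of the functional invariance principle, so the prefactor produced by Caravenna's theorem cancels against the same prefactor in the Sparre Andersen asymptotic for $\P(\mathcal{C}_n)$; performing this cancellation carefully is the only substantive step, and once it is done \eqref{eqA0.1} follows with the claimed uniformity.
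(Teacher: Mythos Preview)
Your proposal is correct and matches the paper's approach: the paper does not give an independent proof of Proposition~\ref{prop-car} at all, but simply states that it ``follows from [Caravenna \cite{Car}, Theorem~1],'' which is exactly the source you invoke. One small remark: Caravenna's Theorem~1 is already phrased as a uniform local limit theorem for the \emph{conditional} law $\P(S_n\in\cdot\mid\mathcal{C}_n)$, so your decomposition into numerator and denominator (and the subsequent cancellation of the Sparre Andersen prefactor) is an unnecessary detour --- you can read off \eqref{eqA0.1} directly from Caravenna's statement once you specialize to the finite-variance case with $\sigma^2=1$, for which the normalizing sequence is $\sqrt{n}$ and the meander density at time~1 is $y\,e^{-y^2/2}$.
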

Also note that, by [Caravenna \cite{Car}, (2.6)] (or from older references), 
\begin{equation}
\label{eqA0.2}
\P(\mathcal{C}_n) \sim  C/\sqrt{n} \quad \text{for some } C>0\,.
\end{equation}

In order to demonstrate (\ref{eq-S-y}) and (\ref{eq-S-z}) of
Lemma  \ref{prop-1DRW}, with $S_k^{(n)} = S_k$, 
we first demonstrate the limits (\ref{eqA0.3}) and (\ref{eqA0.5}) below,
which are extensions of (\ref{eqA0.1}), with  the 
restriction $S_1,S_2,\ldots,S_m\ge -y$,
for fixed $y\ge 1$, replacing that of 
$S_1,S_2,\ldots,S_m > 0$, for $m=\lfloor n/2 \rfloor$.
We then express $S_n$ by ``gluing together" 
independent copies of $S_m$ and
$-S_{n-m}$, and apply (\ref{eqA0.3}) and (\ref{eqA0.5}) to each
half to obtain (\ref{eq-S-y}) and (\ref{eq-S-z}).
\begin{proof}[Proof of (\ref{eq-S-y}) and (\ref{eq-S-z}) of
 Lemma \ref{prop-1DRW} for $S_k$] 
We may assume WLOG that 
$\sigma^2(X) = 1$ by rescaling the random walk.
In order to demonstrate (\ref{eq-S-y}) and (\ref{eq-S-z}),
we first show that, for appropriate $C>0$,
\begin{equation}
\label{eqA0.3}
\sup_{x\ge -y} |m\,\P(S_m\in [x,x+q); S_k \ge -y \text{ for all } 1\le k\le m)
-  \bar{\beta}_y q x \mathrm{e}^{-x^2/2m}/m^{1/2} |  
\rightarrow_{m\rightarrow\infty} 0\,,
\end{equation}
for fixed $y,q>0$, where $\bar{\beta}_y$ is right continuous and
monotone in $y$, and
\begin{equation}
\label{eqA0.5}
\bar{\beta}_y /y \rightarrow_{y\rightarrow\infty} \bar{\beta}^* >0\,.
\end{equation}
(Although the latter term in (\ref{eqA0.3}) is negative for $x<0$,
it will be negligible for large $m$ and will not affect the limit.)

To show (\ref{eqA0.3}) and (\ref{eqA0.5}), we introduce the following
terminology.  
Let $\tau$ be the first time at which $\{S_k: k\le n\}$ takes its
minimum.
Abbreviate the event on the left hand side of 
(\ref{eqA0.3}) by $\Psi_m^y([x,x+q))$, and partition
$\Psi_m^y([x,x+q))$ by 
$\{\Psi_{m,M}^{y,1}([x,x+q)),\Psi_{m,M}^{y,2}([x,x+q))\}$, where
$\Psi_{m,M}^{y,1}(\cdot)$ is given by the restriction
$\tau \le \lfloor M^2y^2\rfloor$ and $\Psi_{m,M}^{y,2}(\cdot)$
by the restriction $\tau \in [\lfloor M^2y^2\rfloor +1, m-1]$.

Subdividing the interval $(\lfloor M^2y^2\rfloor, m-1]$ into 
$[\lfloor M^2y^2\rfloor +1, \lfloor m/2\rfloor$], 
$[\lfloor m/2\rfloor +1, \lfloor m(1-\varepsilon_{M,y})\rfloor]$, 
and $[\lfloor m(1-\varepsilon_{M,y})\rfloor +1, m-1]$, 
with $\varepsilon_{M,y}= 1/M^2y^2$, one obtains
\begin{equation*}
\begin{split}
&\P(\Psi_{m,M}^{y,2}([x,x+q))) \lesssim 
\sum_{k=\lfloor M^2y^2\rfloor +1}^{\lfloor m/2\rfloor}
\frac{qy^2}{k^{3/2}m}(\frac{x}{m^{1/2}}\mathrm{e}^{-x^2/2m} + o_m(1)) \\
& +\sum_{k=\lfloor m/2\rfloor +1}^{\lfloor m(1 - \varepsilon_{M,y})\rfloor} 
\frac{qy^2}{m^{3/2}(m-k)}(\frac{x}{(m-k)^{1/2}}\mathrm{e}^{-x^2/2(m-k)} +o_{m-k}(1)) 
+ \sum_{k=\lfloor m(1 - \varepsilon_{M,y})\rfloor +1}^{m-1}
\frac{y^2}{m^{3/2}(m-k)^{1/2}}\\
&\lesssim
\frac{qy}{Mm}
[(\frac{x}{m^{1/2}}\mathrm{e}^{-x^2/2m})(1 + 1/\varepsilon_{M,y}m^{1/2})
+1/q +o_m(1) + \frac{M^3y^3}{m^{1/2}}o_{\varepsilon_{M,y}m}(1)] \\
& \lesssim \frac{qy}{Mm}
[(\frac{x}{m^{1/2}}\mathrm{e}^{-x^2/2m})(1 + o_m(1))
+2/q ] \,, 
\end{split}
\end{equation*}
with $o_m(1)$ being uniform in $x\ge -y$.  
The first inequality uses (\ref{equationABR1}) of 
Lemma \ref{lemmaABRold} together with the
reversed random walk $\tilde{S}_k$ having increments $-X_k$, and 
(\ref{eqA0.1}) and (\ref{eqA0.2}).

So, to demonstrate (\ref{eqA0.3}), it suffices to show
\begin{equation}
\label{eqA3.1}
\lim_{M\rightarrow\infty}
\limsup _{m\rightarrow\infty} \sup_x
|m\, \P(\Psi_{m,M}^{y,1}([x,x+q))) -
\frac{\bar{\beta}_yqx}{m^{1/2}} \mathrm{e}^{-x^2/2m}| = 0.
\end{equation}
Denoting by $\mu_k(\cdot)$ the subprobability measure
$$\mu_k ([w_1, w_2)) = 
\P(S_k< \min_{j\leq k-1}S_j; S_k\in [w_1, w_2))\,,$$
 one has, again using (\ref{eqA0.1}), (\ref{eqA0.2}), and the
 reversed random walk $\tilde{S}_k$,
\begin{equation}
\label{eqA3.2}
\begin{split}
m& \P(\Psi_{m,M}^{y,1}([x,x+q)))=
m\sum_{k=0}^{\lfloor M^2y^2\rfloor} \int_{-y}^0
\Psi_{m-k}^0([x-w,x-w+q))\mu_k(dw) \\
& \quad = Cq(\frac{x}{m^{1/2}}\mathrm{e}^{-x^2/2m} + o_m(1))
\sum_{k=0}^{\lfloor M^2y^2\rfloor}\mu_k([-y,0]) \,,
\end{split}
\end{equation}
for $C>0$ as in (\ref{eqA0.2}), where $o_m(1)$ is uniform in 
$x\in \mathbb{R}_+$.  Hence,  (\ref{eqA3.1}) holds, with 
$\bar{\beta}_y = C\sum_{k=0}^{\infty}\mu_k([-y,0])$ 
(which is $\lesssim 
\sum_{k=1}^{\infty}y^2/k^{3/2} < \infty$), and
$\bar{\beta}_y$ is obviously monotone and right continuous
in $y$.

In order to demonstrate (\ref{eqA0.5}), we further partition 
$\Psi_{m,M}^{y,1}(\cdot)$ into $\Psi_{m,M}^{y,1a}(\cdot)$ and
$\Psi_{m,M}^{y,1b}(\cdot)$, with $\Psi_{m,M}^{y,1a}(\cdot)$
denoting the restriction of $\Psi_{m,M}^{y,1}(\cdot)$ to
$\tau < \lfloor y^2/M^2 \rfloor$ and $\Psi_{m,M}^{y,1b}(\cdot)$
the restriction to $\tau \in  
[\lfloor y^2/M^2 \rfloor, [\lfloor M^2y^2\rfloor ]$.  We note that,
using (\ref{eqA0.1}) and (\ref{eqA0.2}),
\begin{equation}
\label{eqA3.3}
y^{-1}m\,\P(\Psi_{m,M}^{y,1a}([x,x+q))) \lesssim 
\frac{q}{M}(\frac{x}{m^{1/2}}\mathrm{e}^{-x^2/2m} + o_m(1)) \,,
\end{equation}
where $o_m(1)$ is uniform in $x\ge -y$ and
$y\in \mathbb{R}_+$, and hence
\begin{equation*}
\lim_{M\rightarrow\infty} \limsup_{m\rightarrow\infty}
\sup_{x,y\in \mathbb{R}+}
y^{-1}m\,\P(\Psi_{m,M}^{y,1a}([x,x+q))) = 0 \,.
\end{equation*}
So, for (\ref{eqA0.5}), it suffices to demonstrate
\begin{equation}
\label{eqA4.1}
\lim_{M\rightarrow\infty}
\limsup_{y\rightarrow\infty} \limsup_{m\rightarrow\infty}
|y^{-1}m\P(\Psi_{m,M}^{y,1b}([x,x+q))) - 
\bar{\beta}^*q \frac{x}{m^{1/2}}\mathrm{e}^{-x^2/2m}| = 0 
\end{equation}
for some $\bar{\beta}^*>0$ and each $x$.

One proceeds as in (\ref{eqA3.2}), but also applying 
(\ref{eqA0.1}) to $\mu_k(\cdot)$, to obtain
\begin{equation}
\label{eqA4.2}
\begin{split}
y^{-1}m& \, \P(\Psi_{m,M}^{y,1b}([x,x+q)))=
y^{-1}m\sum_{k=\lfloor y^2/M^2\rfloor}^{\lfloor M^2y^2\rfloor} \int_{-y}^0
\Psi_{m-k}^0([x-w,x-w+q))\mu_k(dw) \\
& \quad = C^2q(\frac{x}{m^{1/2}}\mathrm{e}^{-x^2/2m} +o_{m,1}(1))[y^{-1}
\sum_{k=\lfloor y^2/M^2\rfloor}^{\lfloor M^2y^2\rfloor} 
k^{-1/2}(1-\mathrm{e}^{-y^2/2k})+ o_{k,2}(1)]\,,
\end{split}
\end{equation}
where $o_{m,1}(1)$ and $o_{k,2}(1)$ are uniform in $x\ge -y$ and $y\in \mathbb{R}_+$.
(The bound $o_{k,2}(1)$ requires some estimation:  for $k << y^2$ one
employs (\ref{eqA0.2}) as an upper bound for $\mu_k([-y,0])$; for
$k >> y^2$, one employs the upper bound $C'y^2/k^{3/2}$, for
some $C' > 0$.)
Setting $\ell = k/y^2$, one has  
$$y^{-1}\sum_{k=\lfloor y^2/M^2\rfloor}^{\lfloor M^2y^2\rfloor}
k^{-1/2}(1-\mathrm{e}^{-y^2/2k})
\rightarrow \int_0^{\infty} (1-\mathrm{e}^{-1/2\ell})/\sqrt{\ell}\, d\ell$$ 
as first $y\rightarrow\infty$ and then $M\rightarrow\infty$.
Together with (\ref{eqA4.2}), this implies (\ref{eqA4.1}) with 
$\bar{\beta^*} = 
C^2 \int_0^{\infty} (1-\mathrm{e}^{-1/2\ell})/\sqrt{\ell}\, d\ell$.

We now show that (\ref{eq-S-y}) and  
(\ref{eq-S-z}) follow from 
(\ref{eqA0.3}) and (\ref{eqA0.5}).
Denote by $S_k^x$ and $\tilde{S}_k^x$ the random walks with
increments $X_k$ and $-X_k$ started at $x$, and 
abbreviate by setting 
$A_{n,y}^x = \{S_k^x \ge -y \text{ for all } 0 < k < n\}$, with
$A_{n,y} = A_{n,y}^0$. 
We assume $n$ is even; the argument for odd $n$ is the same.  

Setting $m=n/2$, one has, for given $[a_i, b_i)$, with $a_i < b_i$, 
and $N\in \mathbb{Z}_+$, the decomposition
\begin{equation}
\label{eqA5.1}
\begin{split}
&n^{3/2}\, \P(S_n \in [a_i, b_i); A_{n,y}) \\
& \qquad = n^{3/2} (\sum_{x\in -y+\mathbb{Z}_+/N} 
\P(S_m \in [x, x+1/N); A_{m,y}) 
\int_{[x,x+1/N)}\P(S_m^z \in [a_i, b_i); A_{m,y}^z)\nu_m^x(dz))\,,
\end{split}
\end{equation}
where $\nu_m^x(\cdot)$ is some probability measure over $[x,x+1/N)$.
(We will shortly choose 
$[a_i,b_i)$ to be a small subinterval of $[a,b)$.)
As in (\ref{eqjg0.1}),
\begin{equation}
\label{eqA5.2}
\P(S_m^z \in [a_i, b_i); A_{m,y}^z) \in
[\P(\tilde{S}_m^{a_i} \in (z+ a_i -b_i, z]; A_{m,y}^{a_i}), \,
\P(\tilde{S}_m^{b_i} \in (z, z+b_i -a_i]; A_{m,y}^{b_i})] \,.
\end{equation}
Plugging (\ref{eqA5.2}) into (\ref{eqA5.1}), employing 
(\ref{eqA0.2}) and (\ref{eqA0.3}),
and letting $N\rightarrow\infty$, one obtains that the right hand side
of (\ref{eqA5.1}) equals
\begin{equation}
\label{eqA5.3}
2^{3/2}(b_i - a_i)\bar{\beta}_y \hat{\beta}_{i,y}
(\int_0^{\infty} u^2 \mathrm{e}^{-u^2} du + o_m(1)) = 
(1 + o_m(1))2^{1/2}(b_i - a_i)\bar{\beta}_y \hat{\beta}_{i,y},
\end{equation}
where $\hat{\beta}_{i,y} \in [\bar{\beta}_{y+a_i},\bar{\beta}_{y+b_i}]$
and $o_m(1)$ is uniform in $i$.

Now, partition $[a,b)$ by $\{[a_i,b_i)\}_{i=1,\ldots,I}$ so that 
$b_i - a_i \le \varepsilon$, for given $\varepsilon > 0$. By
(\ref{eqA5.1})--(\ref{eqA5.3}),
\begin{equation*}
n^{3/2} \P(S_m \in [a, b); A_{m,y}) =
(1 + o_m(1))2^{1/2}\bar{\beta}_y \sum_i (b_i - a_i) \hat{\beta}_{i,y} \,,
\end{equation*}
which converges to 
$2^{1/2}\bar{\beta}_y \int_a^b \bar{\beta}_{y+v}dv =: \beta_{y,a,b} $  
as $m\rightarrow\infty$ and then 
$\varepsilon \rightarrow 0$.  This implies (\ref{eq-S-y}).  
Clearly, $\beta_{y,a,b}$ is continuous in $a$ and $b$, and, since $\bar{\beta}_y$
is right continuous in $y$, so is $\beta_{y,a,b}$.
The limit
(\ref{eq-S-z}) follows immediately from this and (\ref{eqA0.5}),
with $\beta^* = 2^{1/2} (\bar{\beta}^*)^2$.

The argument for the analog of (\ref{eq-S-y}), but with the restriction $S_k > -y$ 
instead of $S_k \ge -y$, is the same as that for (\ref{eq-S-y}), but 
with $\bar{\beta}_y$ replaced by  
$\bar{\beta}_y^o = C\sum_{k=0}^{\infty}\mu_k((-y,0])$ and
$\beta_{y,a,b}^o:= 2^{1/2}\bar{\beta}_y^o \int_a^b \bar{\beta}_{y+v}^o dv$.
\end{proof}

We now demonstrate (\ref{eq-S-y-gamma}) and (\ref{eq-S-y-new}) of
Lemma \ref{prop-1DRW} in the case where $S_k^{(n)} = S_k$.  Nearly all of the work is devoted to
(\ref{eq-S-y-gamma}); the proof of
(\ref{eq-S-y-new}) will follow quickly by re-applying some of the steps 
for (\ref{eq-S-y-gamma}).  To show (\ref{eq-S-y-gamma}), we partition
time into three intervals, corresponding to the value of 
$\tau$ (as defined above,
the first time at which $S_k$ takes its minimum).  Define the sets
$\Omega_{n,y}$ and $\Omega_{n,y}'$ as in (\ref{eqA6.1}) below.  We will
show that, over each of the three intervals, either 
$\P(\Omega_{n,y} \cap \{\tau =k\})$ and 
$\P(\Omega_{n,y}' \cap \{\tau =k\})$ are close, or 
$\P(\Omega_{n,y}' \cap \{\tau =k\})$ is insignificant.
Hence, $\P(\Omega_{n,y})$ and 
$\P(\Omega_{n,y}')$ will be close.

\begin{proof}[Proof of (\ref{eq-S-y-gamma}) and (\ref{eq-S-y-new}) of
 Lemma \ref{prop-1DRW} for $S_k$]
For both (\ref{eq-S-y-gamma}) and (\ref{eq-S-y-new}), we first restrict ourselves
to the case $a\ge -1/2$, and afterwards show $a < -1/2$ by reversing
the random walk.
We first demonstrate (\ref{eq-S-y-gamma}) and note that
the reasoning for the analog of (\ref{eq-S-y-gamma}), but with "$S_k \ge$" replaced by the strict inequality 
"$S_k >$", will be identical.

Let $\tau$ be defined as above.
Set $I_y = \mathbb{Z}_+ \cap (y^7, n-y^7)$,
$h_n (x) = y^{1/10} + 
h(x \wedge (n-x))$, and
\begin{equation}
\label{eqA6.1}
\begin{split}
\Omega_{n,y} &= \{S_n\in (a, b), S_k\geq -y \mbox { for all } 1\leq k\leq n\}\,,\\
 \Omega'_{n,y}& =\{S_n\in (a, b), S_k\geq -y - h_n(k)\mbox { for all } 1\leq k\leq n\}\,.
\end{split}
\end{equation}
Applying the first part of Lemma \ref{lemmaABRold}, with $y=0$, we obtain 
\begin{align}\label{eq-minimizer-location}
\P(\tau \in I_y; \Omega'_{n,y}) &\le C^2\sum_{k\in I_y} \sum_{j=0}^{y + h_n(k)} 
\frac{j(j+b)}{k^{3/2}(n-k)^{3/2}} \nonumber\\
&\le C^2b_1 \sum_{k\in I_y} \frac{(y +  h_n(k))^3}{k^{3/2} (n-k)^{3/2}} 
\le 2C_1C^2b_1y^{-1/2} n^{-3/2}\,,
\end{align}
where $b_1:=b\vee 1$, $C$ is as in Lemma \ref{lemmaABRold}, and $C_1>0$.  
To prove \eqref{eq-S-y-gamma}, 
we will employ (\ref{eq-minimizer-location}), together with suitably small upper bounds
on $\P(\tau = k; \, \Omega'_{n,y}\setminus \Omega_{n,y})$ relative to $\P(\tau = k; \, \Omega_{n,y})$, 
for every $k\notin I_y$.
Using the symmetry of $h_n(\cdot)$, we only consider $k< y^7$, which we break into the subcases $k\leq y^{19/10}$
and $k\in (y^{19/10}, y^7)$. 

For $k\leq y^{19/10}$, 
\begin{align}\label{eq-small-k-prime}
&\P(\tau = k; \Omega'_{n,y} \setminus \Omega_{n,y})\nonumber\\
 \leq& \P(-y - h_n(k)\leq S_k \leq -y) \max_{-y^*-h_n(k)\leq x\leq -y}\P(S_j\geq 0 \mbox{ for all } 1\leq j\leq n-k; S_{n-k} \in (a-x, b-x))\nonumber\\
 \leq & C_2C y^{-2} \cdot (b+y) n^{-3/2}\le 2C_2Cb_1 y^{-1}n^{-3/2} \,,
\end{align}
where $C_2>0$.
The second term of the first inequality, 
on the third line, follows from (\ref{equationABR1}) of Lemma
\ref{lemmaABRold} with $y=0$, 
and the first term follows 
from the assumption of exponential moments on the random walk (40 moments suffices) together 
with a moment estimate.  
On the other hand, by (\ref{equationABR2}) of Lemma \ref{lemmaABRold} (for large enough $n$),
$$\P(\tau = k; \,\Omega_{n,y}) \gtrsim n^{-3/2}\sum_{i=1}^{\sqrt{k}} \frac{i (i+a)}{k^{3/2}} 
\asymp (1+b_1/\sqrt{k}) n^{-3/2} \ge (1+b_1/y)n^{-3/2}\,.$$
Combined with \eqref{eq-small-k-prime}, it follows that, for $k\leq y^{19/10}$,
\begin{equation}\label{eq-ratio-small-k}
\P(\tau = k; \,\Omega'_{n,y} \setminus \Omega_{n,y}) \lesssim y^{-1}\P(\tau = k; \,\Omega_{n,y})\,.
\end{equation}

We next consider $k\in (y^{19/10}, y^7)$.  Since, for appropriate $C''$,  
the boundary for $\Omega'_{n,y}$ lies at most
$C''y^{1/10}$ below that for $\Omega_{n,y}$ at any time $k$, another application of (\ref{equationABR1})
implies that
\begin{equation}
\label{eq-postratio}
\begin{split}
\P(\tau = k; \Omega'_{n,y} \setminus \Omega_{n,y}) 
&\le C'''y^{1/10}\cdot k^{-3/2} (y+ \log k)(y+ \log k+b)n^{-3/2} \\
&\le 4C'''k^{-3/2}y^{11/10}(y+b) n^{-3/2}\,,
\end{split}
\end{equation}
for appropriate $C'''$.
Another application of (\ref{equationABR2}), after summation, implies that
\begin{equation}
\label{eqpostnew1}
\P(\tau = k; \,\Omega_{n,y}) \gtrsim (y\wedge \sqrt{k})^2 (y\wedge \sqrt{k}+b)k^{-3/2}n^{-3/2}\,.
\end{equation}
Combining the last two displays, it follows that, for $k\in (y^{19/10}, y^7)$,
\begin{equation}
\label{equationBTA}
\P(\tau = k; \,\Omega'_{n,y} \setminus \Omega_{n,y}) \lesssim y^{-7/10}\P(\tau = k; \,\Omega_{n,y})\,.
\end{equation}
Also note that summing (\ref{eqpostnew1}) over $k\in [y^2,2y^2]$
implies 
\begin{equation}
\label{eqpostnew2}
\P(\Omega_{n,y}) \gtrsim y(y+b)n^{-3/2},
\end{equation}
which dominates the bound in \eqref{eq-minimizer-location} for large $y$.

Together, \eqref{eq-minimizer-location}, \eqref{eq-ratio-small-k}, and (\ref{equationBTA}) 
imply that
\begin{equation}
\label{eqpostnew3}
\P(\Omega'_{n,y}) \lesssim (1+y^{-7/10}) \P(\Omega_{n,y}) + b_1 y^{-1/2} n^{-3/2}\,.
\end{equation}
Along with (\ref{eqpostnew2}) and \eqref{eq-S-y}, 
this completes the proof of \eqref{eq-S-y-gamma}.

The inequality (\ref{eq-S-y-new}), for $j=n$ and $a\ge -1/2$,
follows by applying the first part of Lemma 
\ref{lemmaABRold} (with general $y$) to bound $\P(\Omega_{n,y})$ from above, 
and then combining 
this with the upper bounds \eqref{eq-minimizer-location}, \eqref{eq-small-k-prime}, and (\ref{eq-postratio}), for 
$\P(\tau = k; \Omega'_{n,y} \setminus \Omega_{n,y})$ over the three 
ranges of $k$.  Specifically, the bound in (\ref{eq-S-y-new})
is of the same order as that in (\ref{equationABR1}) and, in
place of the $y$ coefficients in (\ref{equationABR1}),  the bound in \eqref{eq-minimizer-location}
contributes the coefficient $b_1 y^{-1/2}$, the bound in 
\eqref{eq-small-k-prime} contributes the coefficient $b_1 y^{9/10}$
(after summing over the region $k\le y^{19/10}$), and the bound
in  (\ref{eq-postratio}) contributes the coefficient $y^{3/20}(y+b)$
(after summing over the region $k\in (y^{19/10},y^7)$).

In order to show (\ref{eq-S-y-gamma}) and (\ref{eq-S-y-new}) for $a < -1/2$, it 
suffices to also assume that $b\le 0$ and $b-a \le 1/2$.   Denoting by 
$\tilde{\Omega}_{n,y}$ and $\tilde{\Omega}'_{n,y}$ the analogs of the sets
$\Omega_{n,y}$ and $\Omega'_{n,y}$, but for the reversed random walk
$\tilde{S}_k$ rather than $S_k$, and using (\ref{eqjg0.1}), 
for (\ref{eq-S-y-gamma}) it is enough to show that
\begin{equation}
\label{eq1112.1}
\P(\tilde{\Omega}'_{n,y+1/2}\setminus\tilde{\Omega}_{n,y-1/2}) \le \delta_y \P(\tilde{\Omega}_{n,y-1/2}) 
\end{equation}
for the interval $(\tilde{a},\tilde{b})$, with $\tilde{a} = -b \ge 0$ and $\tilde{b} = -a$, where $\delta_y \rightarrow 0$ as $y\rightarrow\infty$.
One can employ the same reasoning as for the case $a\ge -1/2$, with the bounds 
in \eqref{eq-minimizer-location} --(\ref{eq-postratio}) holding for
different constants in front.  (The term $h(k)$ there needs to be increased
by $1$.)  The bound (\ref{eq1112.1}) follows after combining these inequalities as in (\ref{eqpostnew3}).  For (\ref{eq-S-y-new}) 
with $j=n$ and $a< -1/2$, one employs
the same reasoning as in the previous paragraph, but for $\tilde{S}_k$
instead of $S_k$.
\end{proof}

\subsection{Demonstration of Lemmas \ref{lemmaABR} and
\ref{prop-1DRW} for general $d^{(n)}$}

Let the random walks $\{S_k\}_{k=0,\ldots,n}$ and 
$\{S_k^{(n)}\}_{k=0,\ldots,n}$
be as in Lemmas \ref{lemmaABR} and \ref{prop-1DRW}.
In this subsection, we show that the non-crossing probabilities 
of the curves there are asymptotically the same 
as $n\rightarrow\infty$, which enables us to show 
Lemmas \ref{lemmaABR} and
\ref{prop-1DRW} for general $d^{(n)}$.  
We will find it convenient to consider two choices of 
translation terms $d^{(n,i)}$ satisfying $d^{(n,1)} < d^{(n,2)}$
and $|d^{(n,i)}| \le c(\log n)/n$.  Also, set 
$h^{(n,i)}(k) = h(k \wedge (n-k)) - d^{(n,i)} k $, 
$a^{(n,i)} = a + d^{(n,i)}n$ and $b^{(n,i)} = b + d^{(n,i)}n$.
\begin{lemma}
\label{lemjg}
Let $S_k$, $c$, and $d^{(n,i)}$, $i=1,2$, 
be as above.  
Then, for fixed $\varepsilon > 0$, $\delta > 0$, and appropriate $C > 0$,
\begin{equation}
\label{eqjg1.1}
\begin{split}
& \P(S_n \in (a^{(n,2)},b^{(n,2)}), S_k\ge -y-h^{(n,2)}(k)
\text{ for all } 0<k<n) \\
&\qquad \le (1+\delta) \P(S_n \in 
(a^{(n,1)}-\delta,b^{(n,1)}+\delta), S_k > -y-h^{(n,1)}(k) \text{ for all } 0<k<n)  \\
&\qquad \qquad  + C(y \vee 1)((y+a)\vee 1)/n^{25/16}
\end{split}
\end{equation}
for all $n$, $y\ge 0$, and $-y\le a < b < \infty$ with $b-a = \varepsilon$.
\end{lemma}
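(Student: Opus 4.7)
The plan is to compare the two probabilities via an exponential change of measure between two random walks whose drifts differ by only $O((\log n)/n)$. First I would reformulate both sides by pathwise substitution: letting $T_k = S_k - d^{(n,2)}k$ turns the LHS into the probability that a random walk $(T_k)$ with i.i.d.\ increments of mean $-d^{(n,2)}$ stays above $-y - h(k\wedge(n-k))$ and ends in $(a,b)$; likewise $U_k = S_k - d^{(n,1)}k$ turns the RHS into the analogous probability for a walk with mean $-d^{(n,1)}$, an enlarged interval $(a-\delta, b+\delta)$, and a strict barrier inequality. Thanks to the finite exponential moments of $X$, both drifted walks can be realized on the original probability space via exponential tilts $\P^{\theta_i}$ of $\P$ with $\theta_i$ chosen so that $\E^{\theta_i}X_1 = -d^{(n,i)}$; Taylor-expanding $(\log\phi)'$ around $0$ gives $\theta_i = -d^{(n,i)}/\sigma^2 + O(((\log n)/n)^2)$, so $|\theta_i|\lesssim (\log n)/n$. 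Writing $F = \{S_n \in (a,b),\, S_k \ge -y-h(k\wedge(n-k))\text{ all }0<k<n\}$ and $\tilde F$ for the analogous event with strict barrier and enlarged interval, the LHS of \eqref{eqjg1.1} is $\P^{\theta_2}(F)$ and the targeted first term on the RHS is $(1+\delta)\P^{\theta_1}(\tilde F)$.

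Next I would exploit the Radon--Nikodym derivative
$$\frac{d\P^{\theta_2}}{d\P^{\theta_1}} = e^{\tilde\lambda S_n - n\Delta}, \qquad \tilde\lambda := \theta_2-\theta_1 = O((\log n)/n), \quad n\Delta := n(\log\phi(\theta_2)-\log\phi(\theta_1)),$$
with $n\Delta = O((\log n)^2/n)\to 0$ by the Taylor expansion of $\log\phi$ around $0$ (recall $\log\phi(0)=(\log\phi)'(0)=0$), and then split according to the size of $|a|$. In the principal case $|a|\le n^{3/4}$, the constraint $S_n\in(a,b)$ on $F$ forces $|S_n|\le |a|+\varepsilon\le 2n^{3/4}$, whence $|\tilde\lambda S_n|=O(n^{-1/4}\log n)\to 0$; thus $d\P^{\theta_2}/d\P^{\theta_1}\le 1+\delta/2$ on $F$ for $n$ large, yielding $\P^{\theta_2}(F)\le(1+\delta/2)\P^{\theta_1}(F)\le(1+\delta)\P^{\theta_1}(\tilde F)$, where the inclusion $F\subseteq\tilde F$ uses $(a,b)\subseteq(a-\delta,b+\delta)$ together with the fact that the LHS barrier lies above the RHS one (since $d^{(n,2)}\ge d^{(n,1)}$). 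In the exceptional case $|a|>n^{3/4}$, Cram\'er-type moderate deviations (available from the exponential moment assumption) give $\P^{\theta_2}(F)\le\P(S_n\in(a^{(n,2)},b^{(n,2)}))\lesssim e^{-\gamma n^{1/2}}$ for some $\gamma>0$, which for large $n$ is dwarfed by $C(y\vee 1)((y+a)\vee 1)/n^{25/16}\ge C/n^{25/16}$; the finitely many remaining small-$n$ instances are absorbed by enlarging $C$.

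The main obstacle I expect is the clean passage from the non-strict ($\ge$) barrier inequality on the LHS to the strict ($>$) one on the RHS: under the stated hypotheses $X$ is only nonlattice and need not admit a density, so the event $\{S_k = -y-h(k\wedge(n-k))\text{ for some }k\}$ cannot be dismissed outright. The proposed remedy is to use the $\delta$-slack in the interval as a reservoir: one can shrink $\delta$ inside the RHS event slightly and correspondingly lower the RHS barrier by an infinitesimal amount so as to turn $>$ into $\ge$ at a strictly smaller barrier, transferring the cost into the multiplicative $(1+\delta)$ factor. A secondary subtlety is uniformity in $y$, handled by the case split: the Radon--Nikodym control depends on $|S_n|$ (hence on $|a|$) rather than on $y$, and the $y$-dependence enters only through the error term in the exceptional regime, where the moderate-deviation bound absorbs it.
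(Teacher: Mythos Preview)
Your approach contains a fundamental confusion between \emph{shifting} a random walk and \emph{tilting} its law. After the substitution $T_k=S_k-d^{(n,2)}k$, the LHS becomes the probability that a walk with increments $X_j-d^{(n,2)}$ (the \emph{shifted} law, with density $w(\,\cdot+d^{(n,2)})$) satisfies $F$. Your $\P^{\theta_2}(F)$, by contrast, is the probability that a walk with increments having the \emph{tilted} law $e^{\theta_2 x}w(dx)/\phi(\theta_2)$ satisfies $F$. These two increment laws share the mean $-d^{(n,2)}$ but are different distributions unless $w$ is Gaussian; hence the identity ``the LHS of (\ref{eqjg1.1}) is $\P^{\theta_2}(F)$'' is false in general, and the Radon--Nikodym computation that follows compares the wrong quantities.

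Concretely, the obstacle the lemma must overcome is that both probabilities are taken under the \emph{same} measure $\P$, while the endpoint intervals $(a^{(n,2)},b^{(n,2)})$ and $(a^{(n,1)}-\delta,b^{(n,1)}+\delta)$ differ by a shift $(d^{(n,2)}-d^{(n,1)})n$ of order $\log n$, far larger than the fixed $\delta$. No measure change is available here; one needs a mechanism to ``slide'' the terminal position by $O(\log n)$ at cost $1+o(1)$. The paper achieves this by splitting $[0,n]$ into three pieces: on the long initial and short final pieces the barrier inclusion (from $d^{(n,1)}<d^{(n,2)}$) suffices, while on a short middle piece of length $n^{1/12}$ an Edgeworth-type local CLT is used to show that moving the endpoint by $\ell^{(n)}_{1,2}=O(\log n)$ changes the transition probability by at most a factor $1+\delta$. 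The additive error term arises from excluding paths with $S_{e_1}$ too small, controlled by the $d^{(n)}\equiv 0$ case of (\ref{eq-S-y-new}). Your tilting argument cannot substitute for this local-CLT step.
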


We will demonstrate Lemma \ref{lemjg} at the end of this subsection.
In the following corollary, $h^{(n)}(k)$, $a^{(n)}$, and  $b^{(n)}$ are the
analogs of $h^{(n,i)}(k)$, $a^{(n)}_i$, and  $b^{(n)}_i$ 
with a given $d^{(n)}$.

\begin{cor}
\label{corjg}
Suppose that $d^{(n)}$ is as above and $d^{(n)} >0$ for all $n$.  
For any $y\geq 1$ and $1-y\leq a <b<\infty$, 
\begin{equation}
\label{eqjg1.3}
\lim_{n\rightarrow\infty} \frac{\P(S_n\in (a^{(n)}, b^{(n)}), 
S_k\geq -y- h^{(n)}(k) \mbox { for all } 0 < k < n)} 
{\P(S_n\in (a, b), 
S_k > -y - h(k \wedge (n-k))\mbox { for all } 0 < k < n)} = 1\,,
\end{equation}
with the rate of convergence being uniform over all sequences
$d^{(n)}$ satisfying $|d^{(n)}| \le c(\log n)/n$ for given $c>0$.
If, instead, $d^{(n)} <0$ is assumed for all $n$,
then the analog of (\ref{eqjg1.3}) holds, but with 
``$\,S_k >\,$" replaced by ``$\,S_k \ge \,$" in the denominator.
The same limits hold, in each case,  
if ``$\,S_k \ge $" is replaced by ``$\,S_k >\,$"
in the numerator.  
\end{cor}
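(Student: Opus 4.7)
The plan is to squeeze the ratio in \eqref{eqjg1.3} between $1\pm o(1)$ by applying Lemma~\ref{lemjg} in two directions, and then reading off the resulting bounds via the asymptotics of Lemma~\ref{prop-1DRW} (already established in the preceding subsection for $d^{(n)}\equiv 0$), together with the continuity of $\beta^o_{y,a,b}$ in $(a,b)$ to close each side. I treat $d^{(n)}>0$; the case $d^{(n)}<0$ is symmetric, obtained by swapping the roles of $0$ and $d^{(n)}$.

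For $\limsup_n N/D\le 1$, I apply Lemma~\ref{lemjg} with $(d^{(n,1)},d^{(n,2)})=(0,d^{(n)})$. The left-hand side of \eqref{eqjg1.1} is then precisely the numerator $N$ of \eqref{eqjg1.3}, and the right-hand side equals $(1+\delta)\,\P(S_n\in(a-\delta,b+\delta),\, S_k>-y-h(k\wedge(n-k))\text{ for all }0<k<n)+O(n^{-25/16})$. By the strict version of \eqref{eq-S-y} (already proved for $d^{(n)}=0$), this probability times $n^{3/2}$ tends to $\beta^o_{y,a-\delta,b+\delta}$, while the denominator $D$ times $n^{3/2}$ tends to $\beta^o_{y,a,b}$; the error $O(n^{-25/16})=o(n^{-3/2})$ is harmless. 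Dividing, letting first $n\to\infty$ and then $\delta\to 0$, the continuity of $\beta^o_{y,a,b}$ in $(a,b)$ yields $\limsup_n N/D\le 1$, and uniformly in sequences $d^{(n)}$ with $|d^{(n)}|\le c(\log n)/n$ since all constants depend only on $c$ and the law of $X$.

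For $\liminf_n N/D\ge 1$, I apply Lemma~\ref{lemjg} with the flipped choice $(d^{(n,1)},d^{(n,2)})=(-d^{(n)},0)$---the only application of this one-sided lemma that can produce this direction, and therein lies the main obstacle. The left-hand side becomes $\P(S_n\in(a,b),\,S_k\ge -y-h(k\wedge(n-k)))$, differing from $D$ only in using ``$\ge$'' rather than ``$>$'', a change of asymptotically vanishing cost by the agreement of $\beta_{y,a,b}$ and $\beta^o_{y,a,b}$ given by Lemma~\ref{prop-1DRW}. The resulting right-hand side is a probability for a walk shifted by $-d^{(n)}$, opposite in sign to the shift in $N$. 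To bridge the two, I apply the time-reversal sandwich \eqref{eqjg0.1} separately to this shifted probability and to $N$; each becomes, up to interval perturbations of size $O(\delta)$ and $O(1/n)$, a probability for the negated-increment walk $\tilde S_k=\sum_{i\le k}(-X_i)$ with matching drift. A further application of Lemma~\ref{prop-1DRW} to $\tilde S$, combined with continuity of $\beta^o$ in the interval endpoints, forces the ratio of the two $\tilde S$-probabilities to $1$, giving the lower bound. The remaining difficulty is purely bookkeeping: tracking how $(a,b)$ and the boundary constants deform under reversal, and verifying that every deformation is absorbed into the $O(\delta)$-slack afforded by continuity of $\beta^o_{y,a,b}$ as $\delta\to 0$. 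The strict-inequality variant of the numerator follows identically from the same argument.
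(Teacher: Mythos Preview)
Your upper-bound half is correct and is exactly the paper's argument.

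The lower-bound half has a real gap. After applying Lemma~\ref{lemjg} with $(d^{(n,1)},d^{(n,2)})=(-d^{(n)},0)$ to $S_k$, you obtain $D'\le(1+\delta)R+o(n^{-3/2})$, where $R$ is an $S$-probability with effective shift $-d^{(n)}$. You then time-reverse both $R$ and $N$ and assert that the resulting $\tilde S$-probabilities carry ``matching drift''. They do not: time reversal sends a shift $d$ to $-d$, so reversed $N$ (which had shift $+d^{(n)}$) carries shift $-d^{(n)}$, while reversed $R$ (which had shift $-d^{(n)}$) carries shift $+d^{(n)}$. The drifts are still opposite, and your ``further application of Lemma~\ref{prop-1DRW} to $\tilde S$'' cannot bridge them: that lemma has been proved at this point only for $d^{(n)}\equiv 0$, and invoking it for nonzero drift is precisely what Corollary~\ref{corjg} is supposed to establish. (Also, $\beta_{y,a,b}$ and $\beta^o_{y,a,b}$ need not agree for nonlattice walks with atoms, so your passage from $D'$ to $D$ needs more care; but this is minor compared with the drift issue.)

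The paper's fix is to apply Lemma~\ref{lemjg} to the reversed walk $\tilde S_k$ rather than to $S_k$. Concretely: partition $(a,b)$ into subintervals $(a_i,b_i]$ of width $\le 1/N$; on each, the lower half of the sandwich \eqref{eqjg0.1} bounds the piece of $N$ below by a $\tilde S$-probability with shift $-d^{(n)}$ (this is \eqref{eqjg1.4}). Now apply Lemma~\ref{lemjg} to $\tilde S$ with $(\tilde d^{(n,1)},\tilde d^{(n,2)})=(-d^{(n)},0)$: since $-d^{(n)}<0$, this yields \eqref{eqjg1.5}, a lower bound by a \emph{zero}-shift $\tilde S$-probability. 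The upper half of \eqref{eqjg0.1} then converts back to an $S$-probability that matches the denominator up to an interval perturbation $O(\delta)$ and a barrier perturbation $1/N$, absorbed respectively by continuity of $\beta^o_{y,a,b}$ in $(a,b)$ and its left-continuity in $y$ as $\delta\to 0$, $N\to\infty$. The partition into width-$1/N$ pieces, which you omit, is essential: the sandwich \eqref{eqjg0.1} shifts the barrier by the full interval width, so without refinement one cannot recover the correct $\beta^o$ limit.
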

\begin{proof}
Since the proofs of all four statements are similar, 
we prove just (\ref{eqjg1.3}). 
The upper bound $1$ for the limit on the left hand side of 
(\ref{eqjg1.3}) is obtained
by setting $d^{(n,1)} = 0$ and $d^{(n,2)} = d^{(n)}$ in 
(\ref{eqjg1.1}), and then employing the
continuity of $\beta_{y,a,b}$ in $a$ and $b$, together with the
lower bound given by (\ref{eq-S-y}) for $d^{(n,1)} = 0$, which
decays more slowly than $n^{-25/16}$.  

The lower
bound $1$ is obtained by applying (\ref{eqjg1.1}) to 
$\tilde{S}_k$ and reversing the roles of  $d^{(n,1)}$ and
$d^{(n,2)}$.   
After partitioning $(a,b)$ into subintervals with endpoints 
$a_i$ and $b_i$ satisfying $0\le b_i - a_i \le 1/N$, for given
$N\in \mathbb{Z}_+$, the analog of the
lower bound in (\ref{eqjg0.1}) implies that
\begin{equation}
\label{eqjg1.4}
\begin{split}
& \P(\tilde{S}_n\in [-b^{(n)}_i, -a^{(n)}_i), 
\tilde{S}_k > -y- \hat{h}^{(n)}(k) - a_i \mbox { for all } 0 < k < n) \\
& \quad \le \P(S_n\in (a^{(n)}_i, b^{(n)}_i], 
S_k \ge -y - h^{(n)}(k)  \mbox { for all } 0 < k < n)\,,
\end{split}
\end{equation}
where $\hat{h}^{(n)}(k)$ is the
analog of $h^{(n)}(k)$, but for the translation
$-d^{(n)}$ instead of $d^{(n)}$.
(For the terminal subinterval with $b_i = b$, we instead use $(a^{(n)}_i,b^{(n)}_i)$.)
On the other hand, by (\ref{eqjg1.1}), for any $\delta > 0$,
\begin{equation}
\label{eqjg1.5}
\liminf_{n\rightarrow\infty}\frac{\P(\tilde{S}_n\in [-b^{(n)}_i, -a^{(n)}_i), 
\tilde{S}_k > -y- \hat{h}^{(n)}(k) - a_i \mbox { for all } 0 < k < n)} 
{\P(\tilde{S}_n\in [-b_i +\delta, -a_i - \delta), 
\tilde{S}_k \ge -y - h(k \wedge (n-k)) - a_i\mbox { for all } 0 < k < n)}
\ge 1\,,
\end{equation}
with convergence being uniform over all 
sequences satisfying $|d^{(n)}| \le c(\log n)/n$.  
Application of the analog of the upper bound in 
(\ref{eqjg0.1}) implies that the denominator in (\ref{eqjg1.5}) is
at least the denominator of (\ref{eqjg1.3}), but with the interval
$(a, b)$ there 
replaced by $(a_i +\delta, b_i - \delta]$ and the term
$h(k \wedge (n-k))$ replaced by $h(k \wedge (n-k)) - 1/N$.
Together with (\ref{eqjg1.4}), (\ref{eqjg1.5}), and the continuity
of $\beta_{y,a,b}^o$ in $a$ and $b$, this implies
\begin{equation}
\label{eqjg1.6}
\liminf_{n\rightarrow\infty} \frac{\P(S_n\in (a^{(n)}_i, b^{(n)}_i], 
S_k\geq -y- h^{(n)}(k) \mbox { for all } 0 < k < n)} 
{\P(S_n\in (a_i, b_i], 
S_k > -y - h(k \wedge (n-k)) +1/N \mbox { for all } 0 < k < n)} \ge 1
\end{equation}
uniformly in $d^{(n)}$.
Summation of the probabilities over all $i$ in the numerator and
in the denominator of (\ref{eqjg1.6}), and then letting 
$N\rightarrow\infty$ gives the lower bound in (\ref{eqjg1.3}), since
$\beta_{y,a,b}^o$ is left continuous in $y$. 
\end{proof}

We next show that both Lemma \ref{lemmaABR}
and Lemma \ref{prop-1DRW} follow from  Corollary \ref{corjg} and the restricted version of Lemma 
\ref{prop-1DRW} with $d^{(n)} \equiv 0$.
Recall that the random walk
$\{S_k^{(n)}\}_{k=0,\ldots,n}$
is defined in Lemma \ref{lemmaABR} and satisfies $\E^{(n)}(S_k^{(n)})=0$.

\begin{proof}[Proof of Lemmas \ref{lemmaABR} and \ref{prop-1DRW}]
Let $\hat{a}^{(n)}$, $\hat{b}^{(n)}$, and  $\hat{h}^{(n)}(k)$ be the
analogs of $a^{(n)}$, $b^{(n)}$, and  $h^{(n)}(k)$, but for the translation
$-d^{(n)}$ instead of $d^{(n)}$.  
In order to show (\ref{eq-S-y}) --(\ref{eq-S-y-gamma})
of Lemma \ref{prop-1DRW}, we note that
\begin{equation}
\label{eqjg2.1}
\begin{split}
& \P^{(n)}(S_n^{(n)}\in (a, b), 
S_k^{(n)}\geq -y- h(k\wedge (n-k)) \mbox { for all } 0 < k < n) \\
& \quad = \P^{(n)}(S_n \in (\hat{a}^{(n)}, \hat{b}^{(n)}), 
S_k\geq -y- \hat{h}^{(n)}(k) \mbox { for all } 0 < k < n) \\
& \quad = \gamma_{a,b,d,y,h}^{(n)} \P(S_n \in (\hat{a}^{(n)}, \hat{b}^{(n)}), 
S_k\geq -y- \hat{h}^{(n)}(k) \mbox { for all } 0 < k < n),
\end{split}
\end{equation}
where $\gamma_{a,b,d,y,h}^{(n)}$ is bounded above by
$\mathrm{e}^{-\theta^{(n)}\hat{a}^{(n)}}$
and bounded below by 
$\mathrm{e}^{-\theta^{(n)}\hat{b}^{(n)}}/\mathbb{E}({\mathrm{e}^{-\theta^{(n)}S_n }})$, 
and $\theta^{(n)}$ is as in (\ref{eq-change-of-measure-0}).
Since $|d^{(n)}|\le c(\log n)/n$, one can check that 
$|\theta^{(n)}|\le c'(\log n)/n$ for some $c'>0$.
Since $a$ and $b$ are fixed, $\E(S_n) = 0$, and $S_n$ has exponential
moments, it follows that both bounds converge to $1$ as
$n\rightarrow\infty$ uniformly in $d^{(n)}$.  Together with (\ref{eqjg1.3}) (with $-d^{(n)}$
in place of $d^{(n)}$) and the restricted versions of 
(\ref{eq-S-y}) --(\ref{eq-S-y-gamma}) for $d^{(n)} = 0$, this implies 
(\ref{eq-S-y}) --(\ref{eq-S-y-gamma}) for general $d^{(n)}$.

We next show (\ref{eq-S-y-new}) of Lemma \ref{prop-1DRW},
first restricting ourselves to the case where $j=n$.  Using
the above bound on $|\theta^{(n)}|$, when $a> n/\log n$, it is not
difficult to show (\ref{eq-S-y-new}) by ignoring the boundary and 
using the same moderate
deviation inequality as in (\ref{eq23here}).  When $a\le n/\log n$,
we first
consider the case where $d^{(n)} <0$.  Then (\ref{eqjg2.1}) again holds and, 
using the above bound on $|\theta^{(n)}|$,
\begin{equation}
\label{eqjg2.2}
\begin{split}
& \P^{(n)}(S_n^{(n)}\in (a, b), 
S_k^{(n)}\geq -y- h(k\wedge (n-k)) \mbox { for all } 0 < k < n) \\
& \quad \lesssim \P(S_n \in (\hat{a}^{(n)}, \hat{b}^{(n)}), 
S_k\geq -y- \hat{h}^{(n)}(k) \mbox { for all } 0 < k < n).
\end{split}
\end{equation}
Together with (\ref{eqjg1.1}) (with $-d^{(n)}$ in place of $d^{(n)}$) 
and the restricted version of (\ref{eq-S-y-new}), this implies the general
version of (\ref{eq-S-y-new}) for $d^{(n)} < 0$.  When
$d^{(n)} >0$, we employ the reversed random walk $\tilde{S}_k$ and 
note that, as in (\ref{eqjg0.1}),
\begin{equation}
\label{eqjg2.3}
\begin{split}
& \P^{(n)}(S_n^{(n)}\in (a, b), 
S_k^{(n)}\geq -y- h(k\wedge (n-k)) \mbox { for all } 0 < k < n) \\
& \quad \le \P^{(n)}(\tilde{S}_n^{(n)}\in (-b, -a), 
\tilde{S}_k^{(n)}\geq -y- h(k\wedge (n-k)) -b \mbox { for all } 0 < k < n) 
\end{split}
\end{equation}
for any $a$, $b$, and $y$.
Since $-{d}^{(n)} < 0$, one can apply (\ref{eqjg2.2}), with $\tilde{S}_k^{(n)}$ in place of $S_k^{(n)}$, and then reason as above.

The demonstration of (\ref{eq-S-y-new}) for general $j \ge n/2$ requires
just a slight modification of the argument in the previous paragraph.  
Since $h(\cdot)\geq 0$ is increasing and
concave, 
we have 
\begin{equation}
        \label{eq-march16a}
h(k\wedge (n-k)) \le 2h(k\wedge (j-k)) + (k/j)h(n-j) 
\quad \text{for all } 0\le k\le j\,.
\end{equation}
(This is the only point in the proof of (\ref{eq-S-y-new}) where the
monotonicity and concavity of $h(\cdot)$ are used.)
Indeed, the inequality in \eqref{eq-march16a} holds trivially when
$k\leq j/2$ and  it holds with equality when $k=j$; for $j/2<k<j$,
it follows from the inequalities
$$ h(k\wedge (n-k))-h(n-j)\leq h(n-k)-h(n-j)\leq h(j-k)$$
and
$$\frac{h(j-k)}{j-k}\geq \frac{h(j)}{j}\geq \frac{h(n-j)}{j}\,,$$
where $j\geq n/2$
 was used in the last inequality
(multiply the latter display by $j-k$ and add $h(j-k)$ to both sides, 
before applying it to the preceding display).
Setting $d_2^{(n)} = 2C' (\log \lfloor n/2 \rfloor)/n$
and $h_2^{(n,j)}(k) = 2h(k\wedge (j-k)) + d_2^{(n)}k$, it follows that
the probability on the left hand side of (\ref{eq-S-y-new}) is, for any
$j\in [n/2,n]$, at most 
\begin{equation}
\label{eqjg2.4}
\P^{(n)}(S_j^{(n)}\in (a, b), 
S_k^{(n)}\geq -y- h_2^{(n,j)}(k) \mbox { for all } 0 < k < j)\,. 
\end{equation}
One can now apply the same reasoning as for the left hand side
of (\ref{eqjg2.3}), but stopping the process $S^{(n)}_k$ at
time $j$ instead of $n$ and applying the additional tilting 
induced by $d_2^{(n)}$.  The bound here, as above, is uniform up to 
the choice of $c$.
This concludes the proof of Lemma \ref{prop-1DRW}.

The equation (\ref{equationABR1}) of Lemma \ref{lemmaABR} is a special case of
(\ref{eq-S-y-new}), with $h(k) \equiv 0$.   So, to prove Lemma \ref{lemmaABR},
we need only still to show (\ref{equationABR2}).  
Supposing that $d^{(n)}>0$,  
we again apply (\ref{eqjg2.1}).  Since $a\le \sqrt{n}$,
$\gamma_{a,b,d,0,0} \ge C'$ for some constant $C'>0$.  
Using (\ref{eqjg1.1}), with $-d^{(n)}$ in place of $d^{(n,1)}$
and $d^{(n,2)} \equiv 0$,  (\ref{equationABR2}) follows from this 
and the special case
of (\ref{equationABR2}) with $d^{(n)} = 0$.  For $d^{(n)}<0$, similar
reasoning holds after employing the reversed random walk $\tilde{S}^n_k$
and (\ref{eqjg0.1}).
\end{proof}

We now demonstrate Lemma \ref{lemjg}.  

\begin{proof}[Proof of Lemma \ref{lemjg}]
In order to show (\ref{eqjg1.1}), we decompose the interval $[0,n]$ into three parts,
$[0,e_1]$, $[e_1,e_2]$, and $[e_2,n]$, where $e_1 = n- 
\lfloor n^{1/4} \rfloor- \lfloor n^{1/12}\rfloor$ and
$e_2 = n-\lfloor n^{1/4}\rfloor$.  
(We could choose larger powers of $n$, e.g., $n^{3/4}$ instead
of $n^{1/4}$ and $n^{1/4}$ instead of $n^{1/12}$.  However, these smaller
powers of $n$ are required in the lattice setting
of Section \ref{sec-lattice}, and so we also 
employ them here for conformity.)
To compare the probabilities 
on the left and right
hand sides of (\ref{eqjg1.1}), we will proceed in essence as follows:\\
$\bullet$
(A) Over the first interval
$[0,e_1]$, compare the probabilities that the same path on each of the two sides always lies
above the corresponding boundary.   Since $d^{(n,2)} \ge d^{(n,1)}$, the boundary on the left hand
side is higher than that on the right hand side, and so the inequality is automatic on this interval.  \\
$\bullet$
(B) Over the middle interval $[e_1,e_2]$, compare a path on the right hand side with the path on the
left hand side that at time $e_1$ takes the same value, but at time $e_2$ is larger by 
$\ell_{1,2}^{(n)}:=(d^{(n,2)}-d^{(n,1)})e_2$.  
We will then employ a version of the local central limit theorem at
time $e_2$ to 
compare probabilities for corresponding paths; 
since $d_{1,2}^{(n)}:=d^{(n,2)}-d^{(n,1)}$ is of order $(\log n)/n$, the probabilities will be approximately the same.  \\
$\bullet$
(C)  Over the last
interval $[e_2,n]$, compare paths that are identical over the interval, except 
for the translation $\ell_{1,2}^{(n)}$ inherited from the middle interval.  After also translating the boundaries 
by $\ell_{1,2}^{(n)}$, the boundary on the left hand
side lies above the boundary on the right hand side.   If the value taken at time $n$ by the path
on the left hand side lies in $(a^{(n,2)}, b^{(n,2)})$, the value on the right hand side lies in $(a^{(n,1)}, b^{(n,1)}+ \delta)$ if 
$\delta \ge d_{1,2}^{(n)}(n - e_2) = d_{1,2}^{(n)}\lfloor n^{1/4}\rfloor = o(1)$. 

\noindent
$\bullet$
The inequality (\ref{eqjg1.1}) will then follow by combining (A)--(C) 
using the Markov property of random walk.

As indicated above, the inequality that is employed for (A) is immediate:
for $a_1 \le b_1$,
\begin{equation}
\label{eqjg3.1}
\begin{split}
& \P(S_{e_1} \in (a_1,b_1), S_k\ge -y-h^{(n,2)}(k)
\text{ for all } 0<k\le e_1) \\
&\qquad \le \P(S_{e_1} \in 
(a_1,b_1), S_k > -y-h^{(n,1)}(k) \text{ for all } 0<k\le e_1)\,.  
\end{split}
\end{equation} 
The inequality that is employed for (C) is also immediate: 
for fixed $\delta \ge d_{1,2}^{(n)}\lfloor n^{1/4}\rfloor$ 
and any $a\le b$ and $x_2$,
\begin{equation}
\label{eqjg3.2}
\begin{split}
& \P(S_{\lfloor n^{1/4}\rfloor} \in (a^{(n,2)} - \ell_{1,2}^{(n)} ,b^{(n,2)} - \ell_{1,2}^{(n)}), 
S_k\ge -y-h^{(n,2)}(k') - x_2 - \ell_{1,2}^{(n)},\,\,
0<k\le \lfloor n^{1/4}\rfloor) \\
&\qquad \le \P(S_{\lfloor n^{1/4}\rfloor} \in 
(a^{(n,1)},b^{(n,1)} + \delta), S_k > -y-h^{(n,1)}(k') - x_2\,,\,\, 
 0<k\le \lfloor n^{1/4}\rfloor),  
\end{split}
\end{equation} 
with $k' := \lfloor n^{1/4} \rfloor - k$, 
since $\ell_{1,2}^{(n)} = d_{1,2}^{(n)}n - d_{1,2}^{(n)}\lfloor n^{1/4}\rfloor$ 
with
$h^{(n,2)}(k') + \ell_{1,2}^{(n)}  \le h^{(n,1)}(k')$ on the interval.

We still need to obtain bounds corresponding to (B), for which we 
first need to restrict
the range of the values at the initial and terminal points of the interval
and to obtain several related bounds.  We
will show that, for given $\varepsilon > 0$, $\delta > 0$, and large enough $n$, 
\begin{equation}
\label{eqjg3.3}
\P(S_{\lfloor n^{1/12}\rfloor} \in (a_2 + \ell_{1,2}^{(n)}, b_2 + \ell_{1,2}^{(n)}))
\le  (1+ \delta)\P(S_{\lfloor n^{1/12}\rfloor} \in 
(a_2,b_2))
\end{equation} 
for all $a_2$ and $b_2$ satisfying $\varepsilon = b_2 - a_2$ and 
$|a_2|\le n^{1/16}$.  On the other hand, 
it follows from a moderate deviation estimate that,
for any $M$ and large enough $C$,
\begin{equation}
\label{eqjg4.1}
\P(|S_{k^{1/12}}| > \tfrac{1}{2}n^{1/16} \text{ for some } 0<k\le n^{1/12}) \le Cn^{-M}
\end{equation}
for all $n$.  
By applying (\ref{eq-S-y-new}) with $j=n$ in the case $d^{(n)} \equiv 0$, 
over both $[0,e_1]$ and $[e_1,n]$, 
it moreover follows that
\begin{equation}
\label{eqjg4.2}
\begin{split}
& \P(S_n \in (a^{(n,2)},b^{(n,2)}), S_{e_1} 
\le n^{1/12}, S_k\ge -y-h^{(n,2)}(k) 
\text{ for all } 0<k < n)  \\
& \qquad \quad 
\le C_1 (y\vee 1)((y+a^{(n,2)})\vee 1) n^{-13/8}
\le C'_1 (y\vee 1)((y+a)\vee 1) n^{-25/16}
\end{split}
\end{equation}
for  large enough $C_1$ and $C'_1$ depending on $c$, and any 
$n$, $y\ge 0$, 
and $a$ and $b$ with
$b-a >0$ fixed.  

The inequality (\ref{eqjg1.1}) follows from the 
inequalities (\ref{eqjg3.1})--(\ref{eqjg4.2}):  Dividing $[0,n]$
into the three subintervals $[0,e_1]$, $[e_1, e_2]$, and $[e_2,n]$ defined
earlier, we will apply the Markov property to $S_k$, 
letting $S_{e_1} \in [x_1,x_1 + dx)$ and 
$S_{e_2} \in [x_2,x_2 + dx)$, for given $x_1$ and $x_2$, and then
integrating over $x_1$ and $x_2$.  On account of
(\ref{eqjg4.2}), we may restrict attention to $x_1 \ge n^{1/12}$. 
Restarting the process $S_k$ at time $e_1$ and applying
 (\ref{eqjg4.1}), we may also disregard paths that cross the boundary
over $[e_1,e_2]$, which enables us to apply (\ref{eqjg3.3}).  
Together with (\ref{eqjg3.1})--(\ref{eqjg3.2}), this
implies (\ref{eqjg1.1}). 

To complete the proof of (\ref{eqjg1.1}), we still need to verify
(\ref{eqjg3.3}).  We introduce
two variables, $S^{(n,1)}$ and $S^{(n,2)}$, with 
$S^{(n,1)} = S_{\lfloor n^{1/12}\rfloor} - a_2$ and 
$S^{(n,2)} = S_{\lfloor n^{1/12}\rfloor} - a_2 -\ell_{1,2}^{(n)}$, and two measures
$\P^{(n,1)}$ and $\P^{(n,2)}$, defined as in 
(\ref{eq-change-of-measure-0}) by tilting the measure $\P$ with
appropriate $\theta^{(n,1)}$ and $\theta^{(n,2)}$, so that
the corresponding expectations satisfy 
$\E^{(n,2)}(S^{(n,2)}) = \E^{(n,1)}(S^{(n,1)}) = 0$.  Reasoning as in 
(\ref{eqjg2.1}),  one can show that
\begin{equation}
\label{eqjg5.1}
\begin{split}
\P(S_{\lfloor n^{1/12}\rfloor} \in (a_2 + \ell_{1,2}^{(n)}, b_2 + \ell_{1,2}^{(n)})
&= \gamma^{(n)}_{a_2,b_2,d} \P^{(n,2)}(S^{(n,2)}\in (0,\varepsilon)), \\
\P(S_{\lfloor n^{1/12}\rfloor} \in (a_2, b_2)
&= \gamma^{(n)}_{a_2,b_2} \P^{(n,1)}(S^{(n,1)}\in (0,\varepsilon)), 
\end{split}
\end{equation}
with $\gamma^{(n)}_{a_2,b_2}, \gamma^{(n)}_{a_2,b_2,d}$ satisfying    
$1 - C n^{-1/50} \le
 \gamma^{(n)}_{a_2,b_2,d}/\gamma^{(n)}_{a_2,b_2}
\le 1 + C n^{-1/50} $, for appropriate $C>0$ depending on $c$ but not
on $a_2$.  
(Note that $\ell_{1,2}^{(n)} \le C' \log n$,
$|\theta^{(n,i)}| \le C'n^{-1/48}$, and 
$|\theta^{(n,2)}-\theta^{(n,1)}| \le C'(\log n)n^{-1/12}$ for
appropriate $C'$, and so, for $|a_2| \le n^{1/16}$,   
$$\exp \{\theta^{(n,2)}|(a_2+ \ell_{1,2}^{(n)})-\theta^{(n,1)}a_2|\}
\le C'' (\log n) n^{-1/48}$$ for appropriate $C''$, with $C'$ and $C''$ depending on $c$.)
Therefore, to demonstrate (\ref{eqjg3.3}), it suffices to show that
\begin{equation}
\label{eqjg5.2}
\P^{(n,2)}(S^{(n,2)}\in (0,\varepsilon))/\P^{(n,1)}(S^{(n,1)}\in (0,\varepsilon))
\rightarrow 1 \qquad \text{as } n\rightarrow\infty,
\end{equation}
uniformly in $|a_2| \le n^{1/16}$.

In order to show (\ref{eqjg5.2}), we will 
apply the central limit asymptotic
expansion given in Feller [\cite{Fell}, Theorem 16.4.1], which states that
\begin{equation}
\label{eqjg5.3}
F_n(x) -\mathfrak{N}(x) - \frac{\mu_3}{6\sigma^3 \sqrt{n}} (1-x^2)\mathfrak{n}(x) = o(\frac{1}{\sqrt{n}}),
\end{equation}
with $F_n(x) := F^{*n}(x\sigma \sqrt{n})$,  where 
$F^{*n}(\cdot)$ is the $n$-fold convolute of a nonlattice, 
mean $0$ random 
variable with variance $\sigma^2$ and third moment $\mu_3$, and 
$\mathfrak{N}(\cdot)$ and $\mathfrak{n}(\cdot)$ 
denote the distribution and
density of a standard normal random variable.  It is not difficult to show that
the variance and third moments of the summands 
$X_k^{(n,i)}$ of $S^{(n,i)}$ (with respect to
$\P^{(n,i)}$), $i=1,2$, converge uniformly over $|a_2| \le n^{1/16}$ to the
variance and third moment of $X_k$.
One can use this to show 
that the error on the right hand 
side of (\ref{eqjg5.3}) is uniform when the formula is applied to 
$S^{(n,i)}$ over this range of $a_2$; 
the limit (\ref{eqjg5.2}) will then follow
from (\ref{eqjg5.3}).   
We summarize the arguments for these steps in the next two paragraphs.

Let $\mu_k^{(i)}$ denote the $k$th moment of $X_k^{(n,i)}$.
Since $|\theta^{(n,i)}| \le C' n^{-1/48}$, it is
not difficult to show that 
\begin{equation}
\label{eqjg5.4}
|\mu_k^{(i)} - \mu_k| \le C_k n^{-1/48}, \qquad i=1,2,
\end{equation}
for any $k$ and appropriate $C_k$, for all $|a_2| \le n^{1/16}$. 
The bound (\ref{eqjg5.4}) will be used to adapt the proof of Theorem 16.4.1
to our setting.

The argument for Theorem 16.4.1 consists of comparing $F_n(\cdot)$
with the remaining terms on the left hand side of (\ref{eqjg5.3})
(denoted by $G(x)$ in Feller \cite{Fell}) by
applying the inversion inequality (16.4.4) of \cite{Fell}, which integrates the
difference of their respective characteristic
functions over an appropriate interval.
In the proof of Theorem 16.4.1, there are three  
contributions to the error term on the right hand side 
of (\ref{eqjg5.3}):  (a) a truncation error $\epsilon$ that arises by
restricting the interval of integration to that in (16.4.4) and that depends 
on $G(\cdot)$; (b) a rapidly decreasing error term in $n$ that depends
on the lower bound of the difference between $1$
and the maximum of the 
absolute value of the characteristic function of $F_n (\cdot)$, 
on an appropriately chosen subinterval of the interval of integration 
in (16.4.4); and (c) a three-term Taylor expansion
for the characteristic function of $F_n(\cdot)$ that 
is compared with the characteristic function of $G(\cdot)$ over
a third subinterval.
Employing (\ref{eqjg5.4}), with $k=2,3$, it is not difficult to
check that one obtains uniform bounds on the 
errors in (a)--(c) for $S^{(n,i)}$ over
$|a_2| \le n^{1/16}$.
Hence, the analog of (\ref{eqjg5.3}) will also hold, with 
the uniform error bound $o(1/\sqrt{n})$ over $|a_2| \le n^{1/16}$.
The limit (\ref{eqjg5.2}) follows from this error bound and another application 
(\ref{eqjg5.4}), which is applied to the left hand side of (\ref{eqjg5.3}) .  
This completes the
demonstration of (\ref{eqjg3.3}) and hence the proof of
Lemma \ref{lemjg}.
\end{proof}  

\smallskip
\noindent
{\bf Acknowledgment} We thank an anonymous referee for a careful reading of the first version
of the paper, and for useful comments.

%\small
\bibliographystyle{abbrv}
%\bibliography{mbrw}

\end{document}